\theoremstyle{plain}
\newtheorem{theorem}{Theorem}
\newtheorem{lemma}[theorem]{Lemma}
\newtheorem{remark}[theorem]{Remark}
\newtheorem*{proof}{Proof}
\DeclareMathOperator{\proj}{proj}
\newcommand{\field}[1]{\mathbb{#1}}
\newcommand{\R}{\field{R}}
\DeclareMathOperator{\dom}{dom}
\newcommand{\abs}[2][n]{\SwitchBracketsizeLeft{#1}\LeftBracketSize\lvert#2\SwitchBracketsizeRight{#1}\RightBracketSize\rvert}
\newcommand{\norm}[2][n]{\SwitchBracketsizeLeft{#1}\LeftBracketSize\lVert#2\SwitchBracketsizeRight{#1}\RightBracketSize\rVert}
\newcommand{\NextScriptStyle}[1]{{\scriptstyle{#1}}}
\newcommand{\NextScriptScriptStyle}[1]{{\scriptscriptstyle{#1}}}
\newcommand{\NextTextStyle}[1]{{\textstyle{#1}}}
\newcommand{\NextDisplayStyle}[1]{{\displaystyle{#1}}}
\newcommand{\SwitchBracketsizeLeft}[1]{
  \ifthenelse{\equal{#1}{b}\OR\equal{#1}{big}}{\let\LeftBracketSize=\bigl}{
    \ifthenelse{\equal{#1}{B}\OR\equal{#1}{Big}}{\let\LeftBracketSize=\Bigl}{
      \ifthenelse{\equal{#1}{g}\OR\equal{#1}{bigg}}{\let\LeftBracketSize=\biggl}{
    \ifthenelse{\equal{#1}{G}\OR\equal{#1}{Bigg}}{\let\LeftBracketSize=\Biggl}{
      \ifthenelse{\equal{#1}{s}\OR\equal{#1}{small}}{\let\LeftBracketSize=\NextScriptStyle}{
        \ifthenelse{\equal{#1}{ss}}{\let\LeftBracketSize=\NextScriptScriptStyle}{
          \ifthenelse{\equal{#1}{t}\OR\equal{#1}{text}}{\let\LeftBracketSize=\NextTextStyle}{
        \ifthenelse{\equal{#1}{d}\OR\equal{#1}{display}}{\let\LeftBracketSize=\NextDisplayStyle}{
          \ifthenelse{\equal{#1}{a}\OR\equal{#1}{auto}}{\let\LeftBracketSize=\left}{
            \let\LeftBracketSize=\relax}}}}}}}}}}
\newcommand{\SwitchBracketsizeRight}[1]{
  \ifthenelse{\equal{#1}{b}\OR\equal{#1}{big}}{\let\RightBracketSize=\bigr}{
    \ifthenelse{\equal{#1}{B}\OR\equal{#1}{Big}}{\let\RightBracketSize=\Bigr}{
      \ifthenelse{\equal{#1}{g}\OR\equal{#1}{bigg}}{\let\RightBracketSize=\biggr}{
    \ifthenelse{\equal{#1}{G}\OR\equal{#1}{Bigg}}{\let\RightBracketSize=\Biggr}{
      \ifthenelse{\equal{#1}{s}\OR\equal{#1}{small}}{\let\RightBracketSize=\NextScriptStyle}{
        \ifthenelse{\equal{#1}{ss}}{\let\RightBracketSize=\NextScriptScriptStyle}{
          \ifthenelse{\equal{#1}{t}\OR\equal{#1}{text}}{\let\RightBracketSize=\NextTextStyle}{
        \ifthenelse{\equal{#1}{d}\OR\equal{#1}{display}}{\let\RightBracketSize=\NextDisplayStyle}{
          \ifthenelse{\equal{#1}{a}\OR\equal{#1}{auto}}{\let\RightBracketSize=\right}{
            \let\RightBracketSize=\relax}}}}}}}}}}
\newcommand{\logmessage}[1]{\@latex@warning{#1}}
\newcommand{\ignore}{\logmessage{Text ignored}\@gobble}
\DeclareMathOperator{\vol}{vol}
\DeclareMathOperator{\trace}{Tr}
\date{\today}
\author{Martin Bauer\thanks{Faculty of Mathematics, University of Vienna.}, Markus Grasmair\thanks{Department of Mathematical Sciences, NTNU Trondheim.}, Clemens Kirisits\thanks{Computational Science Center, University of Vienna.}}
\title{Optical Flow on Moving Manifolds}
\begin{document}

\maketitle
\begin{abstract}
{\bf Abstract.} 
 Optical flow is a powerful tool for the study and analysis of motion in a sequence of images. 
 In this article we study a Horn--Schunck type spatio-temporal regularization functional 
 for image sequences that have a non-Euclidean, time varying image domain. 
 To that end we construct a Riemannian metric that describes the deformation and structure of this evolving surface.  
 The resulting functional can be seen as natural geometric generalization of previous work by Weickert and Schn\"orr (2001) 
 and Lef\`evre and Baillet (2008) for static image domains. In this work we show the existence and wellposedness of 
 the corresponding optical flow problem and derive necessary and sufficient optimality conditions. 
 We demonstrate the functionality of our approach in a series of experiments using both synthetic and real data. 

{\bf Keywords.} Optical flow, evolving surfaces, spatio-temporal regularization, variational methods.

{\bf MSC Subject Classification.} 
58J90; 
65M30; 
68U10; 
\end{abstract}

\section{Introduction}
\paragraph{Optical flow.}
Optical flow is a powerful tool for detecting and analyzing motion in a sequence of images. 
The underlying idea is to depict the displacement of patterns in the image sequence 
as a vector field --- the optical flow vector field --- 
generating the corresponding displacement function.
This framework has applications in a variety of areas connected to computer graphics and 
video analysis, e.g. in video compressing, video surveillance or vision-based robot navigation.  

\paragraph{Variational methods.}
In their seminal article \cite{HorSchu81}, Horn and Schunck proposed 
a \emph{variational ansatz} for the computation of the optical flow vector field.
In this approach one minimizes an 
\emph{energy functional} consisting of a \emph{similarity (data) term} 
and a \emph{regularity term}:
$$\tilde u=\underset{u\in \mathcal H}{\operatorname{argmin}} \left. \mathcal E(u) \right. = \underset{u\in \mathcal H}{\operatorname{argmin}}  \left(\mathcal S(u)+\mathcal R(u)\right)\,. $$
Here $\mathcal H$ denotes an admissible space of vector fields, $\mathcal R$ 
denotes the regularity term for the vector field $u$, and $\mathcal S$ 
denotes the similarity term, which depends on the image sequence $\mathcal I$ under consideration. 
This method turned out to be particularly successful, as the resulting optical flow fields satisfy certain 
desirable properties governed by the choice of the regularization term $\mathcal{R}$.

In their article, Horn and Schunck considered the optical flow problem for 
a sequence of images defined on some domain in $\R^2$.
They proposed  to use the $L^2$-norm of the first derivative of the vector field $u$ 
as a regularization term.  The \emph{wellposedness} of this ansatz has been shown first
by  Schn\"orr in \cite{Schn91a}. There they had to impose an additional assumption on 
the image sequence in order to ensure the coercivity of the functional $\mathcal E$;
this is mainly caused by the so called \emph{aperture problem}, which results from
the impossibility of detecting or discriminating certain types of motion 
in a very regular image.

After the development of the Horn--Schunck functional, several extensions 
and improvements of the regularization term have been developed, see e.g.
\cite{AubDerKor99,BruWeiSchn05,Coh93,Nagel1987,Nagel1988,NagEnk86}. A survey on variational techniques for optical flow can be found in \cite{WeiBruBroPap06}. In the article \cite{Nagel1990}, Nagel proposed 
to add \emph{regularization in time} via smoothing across image discontinuities.
\emph{Time smoothing of the flow field}, on the other hand,
has been introduced by Weickert and Schn\"orr in \cite{WeiSchn01}, where the authors considered an 
additional term containing the time derivative of the vector 
field $u$ in the definition of the regularization functional.  
This alteration still yields a convex  energy functional and 
thus the wellposedness of the optical flow problem can be proven 
employing similar methods as for the original Horn--Schunck functional. 
While these results have been derived for domains in $\R^2$, the
situation of more general --- possibly curved --- image domains has not been considered there.
A first attempt in this direction can be found in \cite{ImiSugTorMoc05,TorImiSugMoc05},
where the authors introduced the optical flow functional for images  on the round sphere. Finally, the case of an arbitrary compact
two-dimensional manifold as image domain has been studied in \cite{LefBai08}. There
the authors discuss the usage of the Horn--Schunck functional on a manifold and
prove a similar wellposedness result as for the plane.

\paragraph{Time varying image domains.}
Recently, Kirisits, Lang and Scherzer have studied the optical flow problem
on a time varying image domain~\cite{KirLanSch13,KirLanSch14a}. The motivation for that
was an application in volumetric microscopy, where one studies
the early development of a zebra-fish embryo. In this setting, almost all
movement between consecutive images takes place on the surface
of the embryo's yolk cell, which, however, is time-dependent as well.
In theory, it would be possible to use the complete volumetric data
in order to compute a three-dimensional optical flow field. In practice,
however, this is not viable because of the huge amount of data involved.
Instead, it makes sense to extract the moving surface in a first step
and then to compute the flow field on this surface in a second, separate step.

The main mathematical challenge at this point is the correct treatment
of a vector field on a moving manifold $\mathcal{M}_t \subset \R^3$, $t \in [0,T]$.
We assume in this paper that this manifold is given by a family of parametrizations
$f(t,\cdot) \colon M\to \R^3$, where the configuration space $M$ is a fixed compact
two-dimensional manifold (possibily with boundary).
The image sequence is defined on this moving manifold, and it is assumed that the structure
of the manifold has an influence on the deformation of the image sequence. The difficulty is  
\emph{to capture the structure of the moving manifold in the optical flow field.} 
Therefore, one has to develop a regularization term that
depends on \emph{the induced, changing Riemannian metric.}

At this point, we want to note that it would, in principle, be possible
to use some fixed Riemannian metric on $M$ in order to obtain a regularization term
like in~\cite{LefBai08}. Then one would lose, however, all the information
about the correct manifold $\mathcal{M}_t$ as well as its movement in space.
In the experimental section we will compare this naive approach to our geometrical 
method and we will see that there is a significant 
difference in the resulting optical flow fields.

\paragraph{Contributions of the article.}
One possibility of a regularization term capturing the structure
of a moving manifold has already been given
in~\cite{KirLanSch13, KirLanSch14a}. In this article we propose a different one
that is induced by a metric $\bar g$ on the product manifold $\bar M=[0,T]\times M$. This metric 
$\bar g$ is constructed in such a way that it incorporates all available information 
on the moving image domain:
\[
\bar g(\cdot,\cdot)=
\begin{pmatrix}
\alpha^2&0\\
0&f^*\langle\cdot,\cdot\rangle_{\R^3}
\end{pmatrix}.
\]
The constant $\alpha>0$ is a weighting parameter
and $f^*\langle\cdot,\cdot\rangle_{\R^3}$ denotes the induced 
surface metric of the parametrization $f$ at time $t$.
Given such a metric, we can use a weighted $H^1$-norm as regularization term:
$$\mathcal R(\bar u)=\int_{\bar M}\beta \bar g(\bar u,\bar u)+\gamma \bar g^1_1(\bar \nabla \bar u,\bar\nabla \bar u) \operatorname{vol}(\bar g) \,.$$
This regularization term is defined for vector fields $\bar{u}$ on the product manifold $\bar{M}$. However, since we do not want to change the time parametrization, we will only consider 
vector fields with vanishing time component, cf.\ Remark~\ref{rem:zeropartialt} for a more detailed explanation of this choice.
Moreover, $\bar g^1_1$ denotes the extension of the metric to $1$-$1$ tensor fields, $\bar \nabla$ denotes the covariant 
derivative of $\bar g$ and $\operatorname{vol}(\bar g)$ is the corresponding volume form. 
Note that this term enforces spatio-temporal regularity, as it contains derivatives in both time and space. 
The parameter $\alpha$ that is included in the definition of the metric allows to 
penalize regularity in time and space separately.
This choice for the regularization term is a natural geometric generalization of the
regularization term on the static manifold $[0,T]\times \R^2$ from \cite{WeiSchn01}.

If we decide to enforce no regularity in time, then the optical flow problem reduces for each time point $t_i$
to the optical flow problem on the static manifold $\mathcal M_{t_i}$. In this case,
our regularization term equals the regularization term used in \cite{LefBai08}.

The similarity term we use in this paper is simply the
squared $L^2$-norm of the defect of the optical flow equation, that is,
\[
\mathcal{S}(\bar u) = \int_{\bar{M}} (\partial_t I + g(\nabla^g I,u))^2\operatorname{vol}(\bar g)\,.
\]
Regarding the wellposedness of this optical flow problem we obtain the following result:

\begin{theorem}[Wellposedness of the optical flow problem]
  Let $\mathcal{M}_t \subset \R^3$ be a moving two-dimensional compact surface, 
  the movement of which is described by a family of parametrizations $f\colon M\to \mathbb R^3$.
  For all parameters $\beta, \gamma > 0$ and any image sequence $I \in
  W^{1,\infty}(\bar{M})$ the optical flow
  functional 
  \[
  \mathcal{E}(\bar{u}) = \mathcal{S}(\bar{u}) + \mathcal{R}(\bar{u})
  \]
  has a unique minimizer in $$\dom(\mathcal{E}):=\{\bar u\in H^1(\bar M,T\bar M ): 
  \bar u =(0\partial_t, u) \text{ with } u\in H^1(\bar{M},TM) \}\,.$$
\end{theorem}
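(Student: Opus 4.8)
The plan is to recognize the functional $\mathcal{E}$ as a continuous, coercive, strictly convex quadratic functional on the Hilbert space $V := \dom(\mathcal{E})$, which is a closed linear subspace of $H^1(\bar M, T\bar M)$ (the constraint that the time component vanish is preserved under weak limits), and then to invoke the direct method of the calculus of variations, equivalently the Lax--Milgram theorem, to obtain existence and uniqueness of the minimizer.

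First I would expand the similarity term. Writing out the square,
\[
\mathcal{S}(\bar u) = \int_{\bar M} (\partial_t I)^2 \operatorname{vol}(\bar g) + 2\int_{\bar M} (\partial_t I)\, g(\nabla^g I, u)\operatorname{vol}(\bar g) + \int_{\bar M} g(\nabla^g I, u)^2 \operatorname{vol}(\bar g),
\]
which exhibits $\mathcal{S}$ as a constant plus a bounded linear term plus a nonnegative quadratic term in $u$. Since $\mathcal{R}$ is already purely quadratic, the full functional takes the form $\mathcal{E}(\bar u) = a(\bar u,\bar u) + \ell(\bar u) + c$ for a symmetric bilinear form $a$, a linear functional $\ell$ on $V$, and a constant $c$.

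Next I would verify the hypotheses needed for Lax--Milgram on $V$. For boundedness, I would use that $I \in W^{1,\infty}(\bar M)$, so that $\partial_t I$ and $\nabla^g I$ are essentially bounded; together with boundedness of the metric coefficients, which follows from the smoothness of $f$ and the compactness of $M$, this makes both $a$ and $\ell$ continuous on $H^1$. For coercivity, the key observation is that the regularization term alone already controls the full $H^1$-norm: since $\beta, \gamma > 0$, the quadratic form $\mathcal{R}$ is bounded below by $\min(\beta,\gamma)$ times the squared geometric Sobolev norm $\int_{\bar M} \big(\bar g(\bar u, \bar u) + \bar g^1_1(\bar\nabla\bar u, \bar\nabla\bar u)\big)\operatorname{vol}(\bar g)$. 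The quadratic part of $\mathcal{S}$ is nonnegative and may therefore simply be dropped in the lower bound, so that $a(\bar u, \bar u) \geq C\|\bar u\|_{H^1}^2$ for some $C > 0$. This is in contrast to the original Horn--Schunck setting, where coercivity required an extra assumption on the image sequence because only the gradient seminorm entered the regularizer.

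The main obstacle is this coercivity step, which ultimately rests on the equivalence of the geometric Sobolev norm --- defined through $\bar g$ and the covariant derivative $\bar\nabla$ --- with a standard $H^1$-norm on $\bar M$. This requires some care, because the spatial block of $\bar g$ is time-dependent, so the Christoffel symbols of $\bar\nabla$ do not vanish and the covariant derivative of $\bar u = (0\partial_t, u)$ contains connection terms coupling the time and space directions. Nonetheless, since $M$ is compact and $f$, and hence $\bar g$, is smooth, all metric coefficients and Christoffel symbols are uniformly bounded from above and, for the metric, bounded away from zero, so the two norms are equivalent with constants depending only on $\bar g$ and $\alpha$. Once coercivity and continuity are in place, the Lax--Milgram theorem produces a unique $\bar u \in V$ at which the first variation of $\mathcal{E}$ vanishes, which is the desired minimizer; uniqueness also follows directly from the strict convexity inherited from the coercive quadratic form $a$. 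Alternatively, one argues by the direct method: coercivity bounds any minimizing sequence in $H^1$, weak compactness extracts a weakly convergent subsequence, and the convexity together with the strong continuity of $\mathcal{E}$ supplies the weak lower semicontinuity needed to pass to the limit.
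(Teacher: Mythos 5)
Your proposal is correct and takes essentially the same route as the paper: the paper's proof likewise observes that $I \in W^{1,\infty}(\bar M)$ makes $\mathcal{E}$ proper (indeed continuous), that $\beta, \gamma > 0$ makes the regularization term, and hence $\mathcal{E}$, coercive, and then concludes that a proper, coercive, quadratic functional on the Hilbert space $\dom(\mathcal{E})$ has a unique minimizer, delegating that last step to standard references. Your explicit expansion into bilinear-plus-linear-plus-constant form, the Lax--Milgram/direct-method argument, and the verification that the geometric Sobolev norm induced by $\bar g$ is equivalent to a fixed $H^1$-norm simply unfold the details the paper leaves to those citations.
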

A similar result is also shown under the assumption of partial Dirichlet boundary conditions.
The $L^2$-norm in the regularization term is added to enforce the coercivity of the energy functional. We also discuss under which assumptions we can 
set the parameter $\beta$ to zero and still obtain a wellposedness result for our functional. We compare our functional to the functionals introduced
by Kirisits et al.~\cite{KirLanSch13,KirLanSch14a} and discuss the wellposedness of the optical flow problem using the regularization terms that are employed there.

Finally, we demonstrate the functionality of our approach in a series of experiments using both synthetic and real data. 
In these experiments we also show the difference between our approach and the straightforward approach, that does not use the actual 
structure of the moving manifold. In both experiments one can see notably different results in regions of
the manifold where either the curvature or the deformation of the manifold is large. 

Another important topic is the strong dependence of the 
optical flow field on the parametrization of the moving manifold. In appendix~\ref{ap:param} we present 
a brief discussion on a possible approach to compute realistic parametrizations given an observed moving un--parametrized manifold. The long term goal will be the combination of segmentation and computation
of the optical flow, which we hope will lead to more reliable results.

\paragraph{Organization of the article.}
In Section~\ref{preliminaries} we recall the differential geometric and functional analytic tools that we will use throughout the article. Readers that are acquainted 
with the theory of Sobolev spaces of vector fields on Riemannian manifolds might skip this part and directly start with Section~\ref{problemformulation} which contains the rigorous mathematical 
formulation of the  optical flow problem studied in this article. In Section~\ref{regularization} we construct the regularization term that we employ in this article, prove the
wellposedness of the corresponding functional and derive the optimality conditions. Up to this point all calculations and results are presented in a coordinate independent 
manner. In order to obtain an implementable version we derive in Section~\ref{coordinates} a coordinate version of the optimality conditions. 
This involves rather technical calculations, that are partly postponed to the appendix.
In Section~\ref{experiments} we show   numerical experiments that demonstrate the functionality of the proposed 
energy functional. 
The appendix contains a discussion on how to compute the parametrization of the moving manifold
and the actual calculations of the coordinate version of the optimality conditions.

\paragraph{Acknowledgements.}
We thank Pia Aanstad from the University of Innsbruck for kindly providing the microscopy data used in this article.
We are grateful for helpful comments and discussions to Lukas Lang, Peter Michor and Otmar Scherzer.  
Martin Bauer was supported by FWF project P24625 and Clemens Kirisits was supported by the Vienna Graduate School in Computational Science (IK I059-N) and FWF project S10505-N20.
Finally we want to thank the referees for their careful proofreading and their valuable comments.

\section{Mathematical Preliminaries}\label{preliminaries}

In this section we are going to recall the differential geometric
and functional analytic tools for Sobolev spaces of vector fields
on $2$-dimensional embedded surfaces, which we will need throughout the article.
A more detailed overview on these topics can be found e.g.~in \cite[Sect.~3]{Bauer2012a}.

\paragraph{Riemannian geometry.}

We are working on $2$-dimensional surfaces that are embedded in $\R^3$ and
parametrized by a mapping
$$f: M\to \R^3$$
from some \emph{configuration space} $M$ into $\R^3$. 
We will always assume that $M$ is a compact $2$-dimensional manifold,
possibly with boundary; typical examples
are the $2$-dimensional sphere $S^2$ or the torus $S^1 \times S^1$.
The mapping $f$ is assumed to be smooth (that is, at least $C^2$) and injective
with injective tangential mapping $Tf\colon TM \to \R^3$
(in other words, $f$ is a smooth embedding).

The embedding $f$ induces in a natural way via pullback a Riemannian metric $g$
on the configuration space $M$. For tangent vectors $X$, $Y\in T_xM$, $x \in M$,
it is given by
$$g(X,Y):=(f^*\langle \cdot,\cdot\rangle_{\R^3})(X,Y) := \langle
T_xf.X,T_xf.Y\rangle_{\R^3}\,.$$
Here $\langle \cdot,\cdot\rangle_{\R^3}$ denotes the standard scalar product on $\R^3$
and $T_xf.X$ denotes the application of the differential $T_xf: T_xM \rightarrow \mathbb R^3$ of the embedding $f$ to 
the tangent vector $X\in T_xM$.

In a chart $(V,v)$ on $M$ the expression of the metric reads as
$$g|_V=\sum_{i,j}g_{ij}dv^i \otimes
dv^j=\sum_{i,j}\langle \partial_if,\partial_j f \rangle_{\R^3} dv^i \otimes dv^j$$
with $\partial_i=\frac{\partial}{\partial v^i}$. 

Next we note that the metric induces an isomorphism between the tangent bundle 
and the cotangent bundle
$$\check g\colon TM \to T^*M,\qquad X \mapsto g(X,\cdot):= X^\flat\;,$$
with inverse $\check g^{-1}$.
Therefore $g$ defines a metric on the cotangent bundle $T^*M$ via
\[
g^{-1}(\alpha,\beta) = \alpha(\check g^{-1}(\beta))\,.
\]
In this article we will need the extension of the metric to $1$-$1$ tensor fields. The reason for this is, that 
this type of tensor field occurs as derivative of a vector field on $M$, which will be a part of our regularization term.
On these tensor fields the metric is given by $$g_1^1:=g\otimes g^{-1}.$$ 

Applied to a $1$-$1$ tensor field $A$ 
this equals the squared Hilbert Schmidt norm of $A$:
\[
 g^1_1( A, A) = \trace(A^* A),
\]
where the adjoint $A^*$ is computed with respect to the
Riemannian metric $g$. 
Here we have interpreted $A$ as a linear
mapping from $T_{x} M$ to $T_{x} M$.

%
%


\paragraph{Sobolev spaces of vector fields.}
The Riemannian metric $g$ on $M$ induces a unique volume density, which we will 
denote by $\vol(g)$. In the chart 
$(V,v)$ its formula reads as
$$\operatorname{vol}(g)|_{V}=\sqrt{\operatorname{det}(\langle \partial_if,\partial_j f
\rangle_{\R^3})}\ |dv^1\wedge dv^2|\,.$$

The Levi--Civita covariant derivative of the metric $g$, which is the unique torsion-free 
connection preserving the metric $g$, will be denoted by $\nabla^g$. When it is clear from the context, we omit the $g$ and simply write $\nabla$ instead of $\nabla^g$.
Note that $\nabla^g$ is just the tangential component of the usual derivative
in the ambient space $\mathbb R^3$, more precisely, for a vector field $u\in C^{\infty}(M,TM)$
$$Tf.\nabla^g u = \proj_{T^{1,1}M} \nabla^{\R^3}(Tf.u)\,.$$

We define 
the Sobolev norms of orders zero and one by
\begin{align*}
	\|u\|^2_{0,g}	&= \int_M g(u,u) \operatorname{vol}(g)\,, \\
	\|u\|^2_{1,g}	&= \int_M g(u,u)+g^1_1(\nabla^g u,\nabla^g u)\operatorname{vol}(g)\,.
\end{align*}

The Sobolev space $H^1(M,TM)$ is then defined as the completion of the space of all 
vector fields $u\in C^\infty(M,TM)$ with respect to the norm $\|\cdot\|_{1,g}$. On a compact manifold different metrics yield equivalent norms and thus lead to the same Sobolev spaces.
We note that, as for Sobolev spaces in $\R^n$, there is an alternative, equivalent
definition of $H^1(M,TM)$ as the space of all square integrable vector fields with
square integrable weak derivatives.

For the definition of more general Sobolev space on manifolds we refer 
to \cite{Triebel1992}; see also \cite{Bauer2013b} for an exposition in a similar notation 
as it is used in this article.
An extension of this theory  to non-compact manifolds can be found in the book \cite{Eichhorn2007}.

\section{Problem Formulation}\label{problemformulation}

We assume that we are given a moving two-dimensional compact surface
$\mathcal{M}_t \subset \R^3$, $t \in [0,T]$ the movement of which is
described by a family of parametrizations $f$. For the moment, we will
restrict ourselves to compact surfaces without boundary, but we will
discuss the situation of manifolds with boundary later.
More precisely, we assume that there
exists a two-dimensional compact $C^2$-manifold $M$ and a $C^2$-mapping
\[
f\colon [0,T] \times M \to \R^3
\]
such that for every fixed time $t \in [0,T]$ the mapping $f(t,\cdot)$
is an embedding and its image equals $\mathcal{M}_t$.
The mapping $f$ defines the movement
of the manifold in the sense that the path of a point $y = f(0,x) \in
\mathcal{M}_0$ is precisely the curve $t \mapsto f(t,x)$. Or, a
point $y_1 \in \mathcal{M}_{t_1}$ corresponds to a point $y_2 \in
\mathcal{M}_{t_2}$ if and only if there exists $x \in M$ with $y_1 =
f(t_1,x)$ and $y_2 = f(t_2,x)$.

Next we model the movement of an image on this moving surface. 
For simplicity we will only consider grey-scale images, although the
model does not change significantly if we also allow color, that is,
vector valued, images. We stress that in our model the movement of
the image is not solely driven by the movement of the surface, but
that there is also an additional movement on the surface, the
reconstruction of which is precisely what we are aiming for.

The image sequence we are considering is given by a real valued function
$\mathcal{I}$ on
\[
\mathcal{M}:=\bigcup_{0\le t \le T} \{t\}\times \mathcal{M}_t \subset
[0,T] \times \R^3\,;
\]
for each $t\in[0,T]$, the function
$\mathcal{I}(t,\cdot)\colon\mathcal{M}_t \to \R$ is the image at the
time $t$. Moreover, there exists a family of diffeomorphisms
$\psi(t,\cdot) \colon \mathcal{M}_0\to\mathcal{M}_t$ such that
\[
\mathcal{I}(0,x) = \mathcal{I}(t,\psi(t,x)).
\]
That is, the diffeomorphisms $\psi(t,\cdot)$ generate the movement of
the image on the evolving surface.

Next, it is possible to pull back the image and the driving family of
diffeomorphisms to the configuration space $M$. Doing so, we obtain a
time dependent function $I\colon[0,T]\times M \to \R$ defined by
\[
I(t,x) = \mathcal{I}(t,f(t,x))
\]
and a family of diffeomorphisms $\varphi(t,\cdot)$ of $M$ defined by
\[
f(t,\varphi(t,x)) = \psi(t,f(0,x))
\]
such that
\begin{equation}\label{eq:Iphi}
I(0,x) = I(t,\varphi(t,x))
\end{equation}
for all $t\in[0,T]$ and $x \in M$.

Furthermore we assume that the diffeomorphisms $\varphi(t,\cdot)$ are
generated by a time dependent vector field $u$ on $M$. Then the curves
$t \mapsto \varphi(t,x)$ are precisely the integral curves of $u$,
that is, 
\begin{equation}\label{eq:dtphi}
\partial_t\varphi(t,x) = u(t,\varphi(t,x))\,.
\end{equation}
If the image $I$ is sufficiently smooth, it is possible to compute the
time derivative of equation~\eqref{eq:Iphi}. Using
the relation~\eqref{eq:dtphi} and the fact that $\varphi(t,\cdot)$ is
surjective, we then see that the image $I$ and the
vector field $u$ satisfy the \emph{optical flow equation}
\begin{equation}\label{eq:of}
0 = \partial_t I(t,x) + D_x I(t,x) u(t,x)
\end{equation}
on $[0,T] \times M$.
We do note that in the equation~\eqref{eq:of} all information about
the movement of the manifold is suppressed, as all the functions have
been pulled back to $M$. It is, however, possible to re-introduce some
knowledge of $\mathcal{M}$ by formulating the optical flow equation not
in terms of differentials but rather in terms of gradients.
To that end we denote by $g$ the time dependent Riemannian metric on $M$ that is
induced by the family of embeddings $f(t,\cdot)$.
Since by definition $\nabla^g I(t,\cdot)^\flat = D_xI(t,\cdot)$,
we can rewrite the optical flow equation as
\begin{equation}\label{eq:ofm}
  0 = \partial_t I(t,x) + g(\nabla^g I(t,x),u(t,x))
\end{equation}
for all $(t,x) \in [0,T] \times M$.

Now assume that the model manifold $M$ is a compact manifold with boundary.
Then the same model of a moving image on the embedded manifolds $\mathcal{M}_t$
is possible, as long as it is guaranteed that the boundary of the manifold
acts as a barrier for the movement of $\mathcal{I}$. That is, the diffeomorphisms
$\varphi(t,\cdot)$ satisfy the additional boundary condition
$\varphi(t,x) = x$ for $x \in \partial M$. In this case, one arrives
at the same optical flow equation~\eqref{eq:ofm}, but, additionally,
one obtains (partial) Dirichlet boundary conditions of the form
$u(t,x) = 0$ for all $(t,x) \in [0,T] \times \partial M$.

The situation is different, when the image $\mathcal{I}$ actually
moves across the boundary of $\mathcal{M}_t$, which can occur if the manifold with
boundary $\mathcal{M}_t$ represents the limited field of view on a larger manifold
that contains the moving image. Then it is not reasonable to model the
movement of the image by a family of global diffeomorphism $\psi(t,\cdot)$.
However, locally it can still be modeled as being generated by a family
of local diffeomorphisms, which in turn can be assumed to be generated
by a time dependent vector field on $M$. With this approach, one arrives,
again, at the same optical flow equation~\eqref{eq:ofm}. 
The difference to the situations discussed above is that the integral curves 
of $u$ may be defined only on bounded intervals.

\paragraph{The Inverse Problem.}
Now we consider the inverse problem of reconstructing the
\emph{movement} of a family of images from the image sequence. We
assume that we are given the family of manifolds $\mathcal{M}_t$
together with the parametrizations $f(t,\cdot)$ and the family of
images $I(t,\cdot)$ (already pulled back to $M$). Our task is to find
a time dependent vector field $u$ on $M$ that generates the movement
of $I$; in other words, a vector field $u$ that satisfies the optical
flow equation~\eqref{eq:ofm}.

Solving this equation directly is not sensible, as, in general, the
solution, if it exists, will not be unique: The optical flow equation
does not ``see'' a flow that is tangential to the level lines of the
image $I$. Thus, if $u$ is any solution of~\eqref{eq:ofm} and the
vector field $w$ satisfies
\[
g(\nabla^g I(t,x),w(t,x)) = 0,
\]
then also $u+w$ is a solution;
this is called the \emph{aperture problem} (see~\cite{FleWei06}).
In addition, the whole model fails in the case of noise leading to
non-differentiable data $I$. In order to be still able to formulate
the optical flow equation, it is possible to pre-smooth the image $I$,
but this will invariably lead to errors in the model and thus the
optical flow equation will only be satisfied approximately by the
generating vector field $u$.
For these reasons, it is necessary to introduce some kind of
regularization. Note that the main focus lies here in the problem of
solution selection.

\section{Classical Variational Regularization}\label{regularization}

One of the most straightforward regularization methods is the
application of Tikhonov regularization, where we try to minimize a
functional composed of two terms, a similarity term, which ensures
that the equation is almost satisfied, and a regularity term, which
ensures the existence of a regularized solution and is responsible for
the solution selection.

\subsection{Spatial Regularization}
If we  consider only spatial regularity,
the definition of the regularity term is straightforward, using
for each time point $t$ the pullback metric $$g(t)(\cdot,\cdot)=f(t,\cdot)^*\langle\cdot,\cdot\rangle_{\R^3}\,.$$
This leads to the energy functional
\begin{equation*}\label{eq:E_notime_def}
\boxed{
\begin{aligned}
\mathcal{E}(u) &:= \mathcal{S}(u) + 
\mathcal{R}(u) \\
&=\int_0^T \int_M
\left( \partial_t I+g(\nabla^g I,u)\right)^2
+ \beta g(u,u)
 + \gamma g^1_1( \nabla^g u, \nabla^g u) \operatorname{vol}(g)\,dt\,,
\end{aligned}
}
\end{equation*}
here $\beta$ and $\gamma$ are weighting parameters.
In this case the problem completely decouples in space and time, i.e., the optimal vector field $u$ has to be minimal
for each time point separately.  Thus the problem  reduces for each time $t$ to the
calculation of the optical flow on the (static) Riemannian manifold
$(M,g(t))$, which yields for each time point $t$ the Energy functional
\begin{equation*}\label{eq:E_notime_def2}
\boxed{
\begin{aligned}
\mathcal{E}(u(t,\cdot)) &:= \int_M
\left( \partial_t I+g(\nabla^g I,u)\right)^2
+ \beta g(u,u)
 + \gamma g^1_1( \nabla^g u, \nabla^g u) \operatorname{vol}(g)\,.
\end{aligned}
}
\end{equation*}
This functional is well
investigated. 
For  $\beta>0$ the coercivity of the energy functional
is clear and one can easily deduce the well-posedness of the optical flow problem.
In \cite{LefBai08,Schn91a} it was shown that one can  guarantee the coercivity of the energy functional 
for $\beta=0$ by requiring the image sequence to satisfy additional conditions.
The conditions in~\cite{Schn91a} for optical flow in the plane require that
the partial derivatives of the image $I$ are linearly independent
functions. This is equivalent to the requirement that no non-trivial constant vector
field $u$ satisfies the optical flow equation for the given image.
Similar requirements are commonly found for Tikhonov regularization with
derivative based regularization terms, see e.g.~\cite{AcaVog94,AubVes97,Gra11a}
and~\cite[Section~3.4]{SchGraGroHalLen09}.
In the case of a non-flat manifold $M$, the condition translates
to the non-existence of a non-trivial covariantly constant vector field
satisfying the optical flow equation.
Obviously, this condition is automatically satisfied, if the
only covariantly constant vector field is $u=0$,
and thus it may be omitted in manifold settings, see~\cite{LefBai08}.

\subsection{Regularization in time and space}
In the following we will look for solutions that additionally
satisfy a regularity constraint in time $t$. For the optical flow in
the plane $\R^2$, this method has been introduced in~\cite{WeiSchn01}.
Spatio-temporal regularization of the optical flow on moving manifolds,
has also been considered in~\cite{KirLanSch13},
but with a different regularity term than the one we will
construct in the following.

In order to construct the regularity term, we consider the product
manifold 
\[
\bar M:=[0,T]\times M
\]
and equip it with the almost product metric 
\[
\bar g(\cdot,\cdot)=
\begin{pmatrix}
\alpha^2&0\\
0&f^*\langle\cdot,\cdot\rangle_{\R^3}
\end{pmatrix}.
\]
The parameter $\alpha>0$ is a weighting parameter, which is included
in order to be able to penalize spatial regularity and regularity in
the time variable differently.
This metric is called almost product metric due to the dependence of
the metric $g(\cdot,\cdot)=f^*\langle\cdot,\cdot\rangle_{\R^3}$ on the time
$t$. In order to simplify notation we denote $\nabla^{\bar g}$ by $\bar \nabla$ from now on.

\begin{remark}
In the following, we will always indicate by a ``bar'' ($\bar{\cdot}$)
that an object is related to the product manifold $\bar{M}$.
For instance, $\bar{g}$ denotes a metric on $\bar{M}$, whereas
$g$ denotes a (time dependent) metric on $M$. Similarly, $\bar{u}$
will later denote a vector field on $\bar{M}$, whereas $u$ will
denote a time dependent vector field on $M$.
\end{remark}

\begin{remark}
We could also consider $\mathcal{M}$ as an embedded submanifold of $\R\times \R^3$:
\begin{equation*}
 \bar f\colon \left\{
\begin{array}{ccc}
[0,T]\times M&\to& \R\times \R^3 \,, \\
(t,x)&\mapsto&(t,f(t,x))\,.
\end{array} \right.
\end{equation*}
We stress here that the metric $\bar{g}$ is \emph{not} the pullback
of the (time scaled) Euclidean metric on $\mathcal{M}$ by the
parametrization $\bar f$.
Instead, it is constructed in such a way that the paths of the points on $M$
are at each time $t$ orthogonal to the manifold $\mathcal{M}_t$.
Moreover, these paths are geodesics with respect to $\bar{g}$.
These properties do, in general, not hold for the usual pullback metric.
\end{remark}

From now on we will identify a time dependent vector field $u$ on $M$
with the vector field
\[
\bar{u}(t,x) := (0\partial_t, u(t,x)) \in C^\infty(\bar{M},T\bar{M})
\]
and define both the similarity term and the regularization term in
terms of $\bar{u}$.
Taking the squared $L^2$-norm with respect
to the metric $\bar g$ of the right hand side of the optical flow
equation \eqref{eq:of} we obtain for the similarity term the
functional
\begin{align*}
\mathcal{S}(\bar{u}) 
&= \lVert \partial_t I+g(\nabla^g I,u)\rVert_{0,\bar{g}}^2\\
&=  \int_{\bar M} \left(\partial_t I(t,x) +
g(\nabla^g I(t,x),u(t,x))\right)^2 \vol(\bar g)\\& = \alpha \int_0^T \int_{M} \left(\partial_t I(t,x) +
g(\nabla^g I(t,x),u(t,x))\right)^2 \vol(g)\,dt\;.
\end{align*}
Here we used the fact that the volume form on $\bar{M}$ splits into
$\vol(\bar g)=\alpha \vol(g)\,dt$. 

For the regularization term we use a weighted $H^1$-norm of the vector field
$\bar{u}$, that is,
\begin{align}\label{Rdef}
 \mathcal{R}(\bar u)
 = \beta\lVert \bar{u}\rVert^2_{0,\bar g}
 + \gamma \|\bar\nabla\bar u\|^2_{0,\bar g} \;,
\end{align}
where $\beta$ and $\gamma$ are weighting parameters. In~\eqref{Rdef}, the term $\lVert\bar{u}\rVert^2_{0,\bar g}$
denotes the $L^2$-norm of the vector field $\bar{u}$ and $\|\bar
\nabla \bar u\|^2_{0,\bar g}$ denotes the norm of its derivative
with respect to the Riemannian metric $\bar g$ , that is, 
\begin{align*}
  \|\bar u\|^2_{0,\bar g}
  &= \alpha\int_0^T \int_M g(u,u)\vol(g)\,dt\;,\\
  \label{H1norm}
 \|\bar\nabla\bar u\|^2_{0,\bar g}&=\alpha \int_0^T\int_M
 \bar g^1_1(\bar \nabla \bar u,\bar \nabla \bar u)\vol(g)\,dt\;.
\end{align*}

To summarize, we propose to solve the optical flow problem on a moving
manifold by minimizing the energy functional
\begin{equation}\label{eq:Edef}
\boxed{
\begin{aligned}
\mathcal{E}(\bar{u}) &:= \mathcal{S}(\bar{u}) + 
\mathcal{R}(\bar{u}) \\
&=
\lVert \partial_t I+g(\nabla^g I,u)\rVert_{0,\bar g}^2
+ \beta\lVert \bar{u}\rVert^2_{0,\bar g}
 + \gamma \|\bar\nabla\bar u\|^2_{0,\bar g}\,,
\end{aligned}
}
\end{equation}
which is defined on
\[
\dom(\mathcal E)=\left\{\bar u \in H^1(\bar{M},T\bar{M}) :
  \bar u=(0\partial_t, u)\right\}\,.
\]
\begin{remark}
  Note that the regularization term depends implicitly on the
  parameter $\alpha$ as well. However, a large value of $\alpha$ leads
  to less time regularity, which is in contrast to the influence of
  the parameters $\beta$ and $\gamma$. Formally, the limit
  $\alpha\to\infty$ corresponds to no time regularization at all.
\end{remark}
\begin{remark}\label{rem:kirlansch}
	We stress the difference between the regularization term proposed in this article and the one from \cite{KirLanSch14a}, which is given by
	\begin{equation*}
		\mathcal R(u) = \int_0^T \int_{\mathcal{M}_t} \lambda_0 \abs{\proj_{T\mathcal{M}_t} \partial_t(Tf.u)}^2 + \lambda_1 \norm{\proj_{T^{1,1}\mathcal{M}_t} \nabla^{\R^3}(Tf.u)}^2.
	\end{equation*}
	Even though the latter functional is also a natural generalization of \cite{WeiSchn01} --- from an embedded point of view ---, there is no obvious metric on $\bar M$ for which it is a weighted homogeneous $H^1$-norm.
\end{remark}
\begin{remark}
  In the case $\beta = 0$,
  where $\mathcal{R}$ is the homogeneous Sobolev semi-norm,
  only variations of the movement on the manifold are penalized but not
  the overall speed of the movement. 
  In contrast, a positive value of $\beta$ encourages a low speed,
  which may lead to a systematic underestimation of the magnitude of the 
  computed flow. For this reason the choice $\beta = 0$ is usually preferable.
  Note, however, that one of the basic assumptions in
  our model is that most of the movement of the image is driven by the
  movement of the manifold. Thus, using a positive value of $\beta$
  can be justified and is somehow natural provided that this assumption holds.
  In addition, the actual numerical computation of the flow field is
  easier for $\beta > 0$ because the condition of the resulting linear
  equation becomes better with increasing $\beta$.
  Still, we have used the parameter choice $\beta = 0$
  for our numerical experiments later in the paper.
\end{remark}
\begin{remark}\label{rem:zeropartialt}
  It is also possible to identify the non-autonomous vector field
  $u$ on $M$ with the vector field $\hat{u}(t,x) := (1\partial_t,
  u(t,x))$ on $\bar{M}$, which incorporates the movement of the image
  both in time and space. If one does so, however, one has to be
  careful about the regularity term. Simply using the squared
  (weighted) $H^1$-norm of $\hat{u}$ has the
  undesirable effect that the natural movement of the manifold, which
  is given by the vector field $\hat{u}_0 := (1\partial_t,0)$, need
  not be of minimal energy for the regularization term: the vector
  field $\hat{u}_0$ is in general not covariantly constant. Instead of
  the norm of the vector field $\hat{u}$ itself one should therefore
  penalize the norm of the difference between $\hat{u}$ and
  $\hat{u}_0$. Doing so, one arrives at the same regularization
  term~\eqref{H1norm} as above, although the interpretation is
  slightly different.
\end{remark}

\subsection{Wellposedness}

The proof of the wellposedness of our model, that is, the question
whether the proposed energy functional $\mathcal{E}$
attains a unique minimizer in $\dom(\mathcal{E})$,
is quite straightforward.
In the following result, we denote by $W^{1,\infty}(\bar{M})$
the space of functions on $\bar{M}$ with an essentially bounded
weak derivative.

\begin{theorem}
  Assume that $\alpha, \beta, \gamma > 0$ and that $I \in
  W^{1,\infty}(\bar{M})$. Then the
  functional 
  \[
  \mathcal{E}(\bar{u}) = \mathcal{S}(\bar{u}) +
  \mathcal{R}(\bar{u})
  \]
  defined in~\eqref{eq:Edef} has a unique minimizer in
  $\dom(\mathcal{E})$.
\end{theorem}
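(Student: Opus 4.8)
The strategy is the standard direct method of the calculus of variations combined with strict convexity. The functional $\mathcal{E}$ is a sum of a similarity term $\mathcal{S}$ and a regularization term $\mathcal{R}$, both of which are quadratic and nonnegative; I would first argue that $\mathcal{E}$ is convex (in fact, I claim it is strictly convex on $\dom(\mathcal{E})$ when $\beta > 0$), then establish coercivity and lower semicontinuity with respect to weak convergence in $H^1(\bar{M}, TM)$, and finally invoke the direct method to extract a minimizer, whose uniqueness follows from strict convexity.

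\emph{Coercivity.} Since $\beta, \gamma > 0$, the regularization term $\mathcal{R}(\bar u) = \beta \lVert \bar u \rVert_{0,\bar g}^2 + \gamma \lVert \bar\nabla \bar u \rVert_{0,\bar g}^2$ controls the full $H^1$-norm from below: up to the fixed constant $\alpha$ and the equivalence of Sobolev norms induced by different metrics on the compact manifold $\bar M$, we have $\mathcal{R}(\bar u) \geq c \lVert \bar u \rVert_{1,\bar g}^2$ for some $c > 0$. Because the similarity term $\mathcal{S}(\bar u) \geq 0$, this gives $\mathcal{E}(\bar u) \geq c \lVert \bar u \rVert_{1,\bar g}^2$, so any minimizing sequence is bounded in $H^1(\bar M, TM)$. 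This is where the assumption $\beta > 0$ does its work — it is precisely what lets us avoid the coercivity difficulties (the aperture problem) discussed earlier in the paper for the case $\beta = 0$.

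\emph{Lower semicontinuity and existence.} Given a minimizing sequence $\bar u_k$, boundedness in the Hilbert space $H^1(\bar M, TM)$ yields a weakly convergent subsequence $\bar u_k \rightharpoonup \bar u_*$; the constraint subspace $\dom(\mathcal{E}) = \{\bar u = (0\partial_t, u)\}$ is a closed linear subspace and hence weakly closed, so the limit $\bar u_*$ remains admissible. Both $\mathcal{R}$ and $\mathcal{S}$ are continuous convex functionals on $H^1$ — here I use that $I \in W^{1,\infty}(\bar M)$ guarantees $\partial_t I$ and $\nabla^g I$ are essentially bounded, so the map $\bar u \mapsto \partial_t I + g(\nabla^g I, u)$ is bounded and linear from $H^1$ into $L^2(\bar M)$, making $\mathcal{S}$ a continuous quadratic form. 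Continuous convex functionals are weakly lower semicontinuous, so $\mathcal{E}(\bar u_*) \leq \liminf_k \mathcal{E}(\bar u_k) = \inf \mathcal{E}$, and $\bar u_*$ is a minimizer.

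\emph{Uniqueness.} The regularization term $\mathcal{R}$ is strictly convex because, with $\beta > 0$, the quadratic form $\bar u \mapsto \beta \lVert \bar u\rVert_{0,\bar g}^2 + \gamma\lVert \bar\nabla \bar u\rVert_{0,\bar g}^2$ is equivalent to $\lVert \bar u \rVert_{1,\bar g}^2$ and thus positive definite; adding the convex $\mathcal{S}$ preserves strict convexity, so the minimizer is unique. The main technical obstacle, and the only step requiring genuine care rather than abstract nonsense, is verifying that the norm-equivalences and the boundedness of $\mathcal{S}$ hold uniformly in the time variable $t$ — that is, confirming that the time-dependence of the pullback metric $g(t)$ does not destroy the coercivity constant or the continuity of $\mathcal{S}$. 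This is controlled by the compactness of $[0,T] \times M$ together with the $C^2$-regularity of the family of embeddings $f$, which ensures the metric coefficients and the volume density are bounded above and below uniformly in $t$, so the abstract argument goes through.
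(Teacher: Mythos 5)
Your proof is correct and follows essentially the same route as the paper: the paper's own argument notes that $I \in W^{1,\infty}(\bar M)$ makes $\mathcal{E}$ proper, that $\beta > 0$ makes it coercive, and then cites the standard result that a proper, coercive, quadratic functional on a Hilbert space has a unique minimizer. What you have done is simply unpack that citation --- the direct method (boundedness of minimizing sequences, weak closedness of the constraint subspace, weak lower semicontinuity of continuous convex functionals) plus strict convexity for uniqueness --- together with the correct observation that compactness of $\bar M$ and $C^2$-regularity of $f$ keep the time-dependent metric uniformly controlled.
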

\begin{remark}
We will see in Sect.~\ref{optimality_cond} that this optimization problem is in a natural way connected to Neumann boundary conditions.
If we want to consider mixed boundary conditions instead---Dirichlet in space and Neumann in time---we have to restrict the domain of the energy functional to
\begin{align*}
 \dom_0(\mathcal{E}):=\{\bar u \in \dom(\mathcal{E}): \bar u=0 \text{ on } [0,T]\times \partial M\}.
\end{align*}
The wellposedness result remains valid on $\dom_0(\mathcal{E})$.
\end{remark}

\begin{proof}
  The condition $I \in W^{1,\infty}(\bar{M})$ guarantees that the
  similarity term $\mathcal{S}(\bar{u})$ is finite for every square
  integrable vector field on $\bar{M}$; in particular it is
  proper. From the condition $\beta > 0$ we obtain that the
  regularization term $\mathcal{R}$ and therefore also the energy
  functional $\mathcal{E}$ is coercive. Thus $\mathcal{E}$ is a proper
  and coercive, quadratic functional on the Hilbert space
  $\dom(\mathcal{E})$, which implies the existence of a unique
  minimizer (cf.~\cite[Section~3.4]{SchGraGroHalLen09} or~\cite{Schn91a}).
\end{proof}

\begin{remark}\label{re:coercivity}
  The condition $\beta > 0$ is not necessary if there is another way
  of guaranteeing the coercivity of the regularization term. 
  This is for instance possible, if there exists no non-trivial
  covariantly constant vector field of the form $\bar{u} =
  (0\partial_t,u)$ on $\bar{M}$. In that case, the homogeneous Sobolev
  semi-norm $\lVert \bar\nabla \bar u\rVert^2_{0,\bar g}$
  is in fact a norm on $\dom(\mathcal{E})$ that is equivalent to the
  standard Sobolev norm, and therefore also the parameter choice $\beta = 0$
  guarantees the coercivity of $\mathcal{E}$.
  Note that this condition is independent of the moving image $I$.

  More generally, even if there are non-trivial, admissible,
  covariantly constant vector fields on $\bar{M}$, the energy function
  will be still coercive for $\beta = 0$, as long as no such
  vector field satisfies the optical flow equation
  $\partial_t I + g(\nabla^g I,u) = 0$. Note,
  however, that the numerical computation of a minimizer may become
  difficult, because the problem, though still wellposed, may become
  ill-conditioned as the parameter $\beta$ approaches zero.
\end{remark}

\begin{remark}
  With a similar argumentation one can show that the functionals
  proposed in~\cite{KirLanSch13,KirLanSch14a} are wellposed provided they are coercive, compare Remark \ref{rem:kirlansch}.
  Because there all the regularization terms penalize only the
  derivative of the vector field $u$ but not its size, the coercivity
  will only hold if one of the conditions in Remark~\ref{re:coercivity}
  is satisfied.
\end{remark}

\subsection{The optimality conditions}\label{optimality_cond}

\begin{lemma}\label{th:gradient}
The $L^2$ gradient of the optical flow energy functional $\mathcal E$
is given by
\begin{align*}
 \operatorname{grad}\mathcal E(\bar u)= 2\Big(\partial_t I + g\big(\nabla^{g} I,u\big)\Big)(0,\nabla^{g} I)+ 2\beta \bar u+ 2\gamma \Delta^{\operatorname{B}} \bar u\;.
\end{align*}
\begin{itemize}
 \item For $\mathcal E$ seen as functional on $\operatorname{dom}(\mathcal E)$ its domain of definition is the set of all vector fields $\bar u \in \operatorname{dom}(\mathcal E)$ satisfying Neumann boundary conditions, i.e.,
$$\operatorname{dom}(\operatorname{grad}(\mathcal E))=\left\{\bar u\in \operatorname{dom}(\mathcal E): \bar \nabla_{\nu}\bar u \big|_{\partial\bar{M}}=0  \right\}\,,$$
where $\nu$ denotes the normal to the boundary of $\bar M$ with respect to $\bar{g}$.
\item For $\mathcal E$ restricted to $\operatorname{dom}_0(\mathcal E)$ its domain of definition is the set of all vector fields $\bar u \in \operatorname{dom}_0(\mathcal E)$ satisfying mixed boundary conditions, more precisely,
$$\operatorname{dom}_0(\operatorname{grad}(\mathcal E))=\left\{\bar u\in \operatorname{dom}_0(\mathcal E): \bar \nabla_{\nu}\bar u \big|_{\{0,T\}\times M}=0  \right\}\,.$$
Note here that on $\{0,T\}\times M$ the normal vector $\nu$ is given by $\nu=\partial_t$.
\end{itemize}

\end{lemma}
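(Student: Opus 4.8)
The plan is to compute the $L^2$ gradient of $\mathcal{E}$ directly by differentiating the quadratic functional, and then to read off the boundary conditions from the integration-by-parts identities that arise. Since $\mathcal{E}$ is a sum of three quadratic terms, I would treat each separately. For a vector field $\bar u \in \dom(\mathcal E)$ and a smooth test field $\bar v$ of the same admissible form $(0\partial_t, v)$, the Gateaux derivative of $\mathcal E$ at $\bar u$ in direction $\bar v$ is
\[
D\mathcal E(\bar u)\bar v = \frac{d}{ds}\Big|_{s=0}\mathcal E(\bar u + s \bar v).
\]
The gradient is then the unique vector field $\operatorname{grad}\mathcal E(\bar u)$ such that $D\mathcal E(\bar u)\bar v = \langle \operatorname{grad}\mathcal E(\bar u), \bar v\rangle_{0,\bar g}$ for all admissible $\bar v$.

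First I would handle the similarity term $\mathcal S$. Differentiating $(\partial_t I + g(\nabla^g I, u))^2$ produces the factor $2(\partial_t I + g(\nabla^g I,u))$ times $g(\nabla^g I, v)$; since $g(\nabla^g I, v) = \bar g((0,\nabla^g I), \bar v)$, this term contributes the summand $2(\partial_t I + g(\nabla^g I,u))(0,\nabla^g I)$ to the gradient with no boundary terms, because $\mathcal S$ contains no derivatives of $\bar u$. The $\beta$-term is immediate: differentiating $\beta \bar g(\bar u, \bar u)$ gives $2\beta \bar u$, again with no boundary contribution. The main work is the $\gamma$-term $\gamma\|\bar\nabla\bar u\|^2_{0,\bar g}$. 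Here differentiation yields $2\gamma\, \bar g_1^1(\bar\nabla\bar u, \bar\nabla\bar v)$, and I would integrate by parts using the divergence theorem on the manifold-with-boundary $\bar M$. The relevant identity is the Bochner-type formula expressing $\int_{\bar M}\bar g_1^1(\bar\nabla\bar u,\bar\nabla\bar v)\operatorname{vol}(\bar g)$ as $\int_{\bar M}\bar g(\Delta^{\operatorname{B}}\bar u, \bar v)\operatorname{vol}(\bar g)$ plus a boundary integral over $\partial\bar M$ involving $\bar g(\bar\nabla_\nu \bar u, \bar v)$, where $\Delta^{\operatorname{B}}$ is the Bochner (connection) Laplacian $-\operatorname{trace}\bar\nabla^2$ and $\nu$ is the outward $\bar g$-unit normal. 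This produces the summand $2\gamma\Delta^{\operatorname{B}}\bar u$ together with the boundary term $2\gamma\int_{\partial\bar M}\bar g(\bar\nabla_\nu\bar u,\bar v)$.

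For $\operatorname{grad}\mathcal E(\bar u)$ to be a genuine $L^2$-gradient—meaning $D\mathcal E(\bar u)\bar v$ equals $\langle\operatorname{grad}\mathcal E(\bar u),\bar v\rangle_{0,\bar g}$ with no leftover boundary term for every admissible $\bar v$—the boundary integral must vanish. For $\mathcal E$ on $\dom(\mathcal E)$ the test fields $\bar v$ are unconstrained on $\partial\bar M$, so the boundary term vanishes for all $\bar v$ precisely when $\bar\nabla_\nu\bar u|_{\partial\bar M}=0$; this is exactly the Neumann condition characterizing $\dom(\operatorname{grad}\mathcal E)$. For $\mathcal E$ restricted to $\dom_0(\mathcal E)$, the test fields satisfy $\bar v = 0$ on $[0,T]\times\partial M$, so the boundary integral over that lateral part of $\partial\bar M$ automatically vanishes regardless of $\bar u$; only the contribution over $\{0,T\}\times M$ remains, and forcing it to vanish for all admissible $\bar v$ gives $\bar\nabla_\nu\bar u|_{\{0,T\}\times M}=0$, with $\nu = \partial_t$ there by the product structure of $\bar g$. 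This yields the mixed boundary conditions claimed.

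The step I expect to be the main obstacle is the integration by parts for the $\gamma$-term and the careful bookkeeping of the boundary measure. One must verify that the adjoint of $\bar\nabla$ acting on the $1$-$1$ tensor $\bar\nabla\bar u$ is indeed the Bochner Laplacian and that the boundary term assembles correctly as $\bar g(\bar\nabla_\nu\bar u,\bar v)$ against the induced measure on $\partial\bar M$; this requires the torsion-freeness and metric-compatibility of $\bar\nabla$. A secondary subtlety is the domain identification: since $\operatorname{grad}\mathcal E(\bar u)$ must itself lie in $L^2(\bar M, T\bar M)$, the appearance of the second-order operator $\Delta^{\operatorname{B}}$ restricts the admissible $\bar u$ to those with sufficient regularity, and one should note that the admissible class $(0\partial_t, u)$ is preserved under $\Delta^{\operatorname{B}}$ because the $\partial_t$-direction is $\bar g$-orthogonal to $TM$ and the metric splits as a block diagonal; this orthogonality is what guarantees that $\operatorname{grad}\mathcal E(\bar u)$ stays within $\dom(\mathcal E)$, so that the gradient is well-defined as an endomorphism of the admissible space.
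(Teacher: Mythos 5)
Your proposal follows essentially the same route as the paper's proof: differentiate each quadratic term, observe that only the $\gamma$-term needs integration by parts, identify $\bar\nabla^*\bar\nabla$ with the Bochner Laplacian, and obtain the Neumann (resp.\ mixed) boundary conditions by requiring the boundary integrals to vanish against test fields that are unconstrained (resp.\ vanish on $[0,T]\times\partial M$). The paper does exactly this, writing the boundary contribution as an integral over $[0,T]\times\partial M$ plus the evaluation term $\int_M \bar g(\bar\nabla_{\partial_t}\bar u,\overline{\delta u})\vol(g)\big|_0^T$ over the temporal faces.

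One claim in your final paragraph is, however, false: $\Delta^{\operatorname{B}}$ does \emph{not} preserve the admissible class of fields $\bar u=(0\partial_t,u)$. Block-diagonality of $\bar g$ does not make $\bar\nabla$ respect the time/space splitting, because $g$ depends on $t$: the Christoffel symbols with upper time index, $\bar\Gamma^0_{ik}=-\partial_t g_{ik}/(2\alpha^2)$ for $i,k\neq 0$, are nonzero whenever the manifold actually moves, so $\bar\nabla\bar u$ and hence $\Delta^{\operatorname{B}}\bar u$ acquire a nontrivial $\partial_t$-component in general (only for a genuine product metric, $\partial_t g=0$, does your argument work). What is true, and is all the proof needs, is weaker: since $\bar g$ is block diagonal, the $\partial_t$-component of $\Delta^{\operatorname{B}}\bar u$ is invisible when paired against admissible variations $(0,\delta u)$, so the displayed expression correctly represents the derivative $D\mathcal E(\bar u)$ on $\dom(\mathcal E)$; the $L^2$-gradient within the constrained space is, strictly speaking, the projection of that expression onto fields with vanishing time component. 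The paper's statement shares this imprecision, but its proof never asserts the invariance of the admissible class under $\Delta^{\operatorname{B}}$, whereas your justification relies on it.
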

Here 
$\Delta^{\operatorname{B}}$ denotes the Bochner Laplacian of $\bar g$, which is defined via
$$\Delta^{\operatorname{B}}=\bar \nabla^* \bar \nabla\;,$$
with $\bar \nabla^*$ denoting the $L^2$-adjoint of the covariant derivative.
The Bochner Laplacian differs only by a sign from the usual Laplace Beltrami operator. 

\begin{proof}
We  calculate the gradients for the two terms separately.  Using a variation $\overline{\delta  u}=(0,\delta u)$ we obtain the following expression for the variation of the similarity term: 
\begin{align*}
&D\left(\mathcal{S}(\bar u)\right)(\overline{\delta  u})= 2\alpha\int_0^T \hspace{-.2cm}\int_{M} \Big(\partial_t I + g\big(\nabla^{g} I,u\big)\Big) g\big(\nabla^{g} I,\delta u\big) \operatorname{vol}(g) \,dt\;.
\end{align*}
From this equation one can easily read off the $L^2$ gradient of the similarity term. It reads as
\[\operatorname{grad}^{L^2}\left(\mathcal{S}(\bar u)\right)= 2\Big(\partial_t I + g\big(\nabla^{g} I,u\big)\Big)(0,\nabla^{g} I)\;.\] 
The variation of the regularization term is given by 
\begin{align*}
& D\left(\mathcal{R}(\bar u)\right)(\overline{\delta u}) =2\alpha\beta \int_0^T \int_{M} \bar g( \bar u, \overline{\delta u})\vol(g)\,dt+2\alpha\gamma \int_0^T\int_M \bar g^1_1(\bar \nabla \bar u,\bar \nabla \overline{\delta u})\vol(g)\,dt\\
 &\qquad=2\alpha \beta \int_0^T\int_M \bar g( \bar u, \overline{\delta u})\vol(g)\,dt+2\alpha\gamma \int_0^T\int_M \bar g(\bar \nabla^* \bar \nabla \bar u,\overline{\delta u})\vol(g)\,dt
 \\&\qquad\qquad+2\alpha\gamma \int_0^T \int_{\partial M} \bar g( \bar \nabla_{\nu} \bar u,\overline{\delta u})\vol(g)|_{\partial M} dt
 +2\alpha\gamma\int_M \bar g( \bar \nabla_{\partial_t} \bar u,\overline{\delta u})\vol(g)\Big|_0^T
 \;.
\end{align*}
The second step consists of a partial integration using  the $L^2$ adjoint of the covariant derivative, which we denote by $\bar \nabla^*$. The last two term in the above expression are the boundary term that results from the partial integration.
From this we can read off the formula for the gradient of the regularization term. Taking into account that the outer normal vector to the boundary of $\{0,T\}\times M$  is given by $\nu=\partial_t$ this concludes 
the proof on $\operatorname{dom}(\mathcal E)$. For $\operatorname{dom}_0(\mathcal E)$ the situation is simpler, since the first boundary integral is already zero if $\overline{\delta u} \in \operatorname{dom}_o(\mathcal{E})$.
\end{proof}

Because of the strict convexity of the energy functional $\mathcal{E}$,
a vector field $\bar{u}$ is a minimizer if and only if it is an element
of $\operatorname{dom}(\operatorname{grad}(\mathcal E))$ and $\operatorname{grad}\mathcal{E}(u) = 0$.
Thus we obtain the following result:

\begin{theorem}
  The minimizer of the  energy functional $\mathcal{E}$ on $\operatorname{dom}(\mathcal E)$ defined in~\eqref{eq:Edef}
  is the unique solution $\bar{u} = (0\partial_t,u)$ of the equation
  \[
  \begin{aligned}
  \Big(\partial_t I + g\big(\nabla^{g} I,u\big)\Big)(0,\nabla^{g} I)+ 
  \beta \bar u+ \gamma \Delta^{\operatorname{B}} \bar u &= 0 &&\text{ in } \bar{M},\\
  \bar \nabla_{\nu} \bar u=(0,\nabla_{\nu} u) &=0  &&\text{ in }  [0,T] \times \partial M \\
  \bar \nabla_{\partial_t}\bar u &=0  &&\text{ in } \{0,T\}\times M.
  \end{aligned}
  \]
  If we restrict the energy functional to  $\operatorname{dom}_0(\mathcal E)$ it is the 
  unique solution of
  \[
  \begin{aligned}
  \Big(\partial_t I + g\big(\nabla^{g} I,u\big)\Big)(0,\nabla^{g} I)+ 
  \beta \bar u+ \gamma \Delta^{\operatorname{B}} \bar u &= 0 &&\text{ in } \bar{M},\\
  \bar \nabla_{\partial_t}\bar u &=0  &&\text{ in } \{0,T\}\times M,\\
  \bar{u} &= 0 &&\text{ in } [0,T] \times \partial M.
  \end{aligned}
  \]
\end{theorem}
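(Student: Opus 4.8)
The plan is to reduce the variational problem to a first-order optimality condition, exploiting the strict convexity of $\mathcal E$ together with the gradient computation of Lemma~\ref{th:gradient}. First I would record that, under the standing assumptions $\alpha,\beta,\gamma>0$ and $I\in W^{1,\infty}(\bar M)$, the regularization term $\mathcal R$ is a positive definite quadratic form on the Hilbert space $\dom(\mathcal E)$ (because $\beta>0$), while the similarity term $\mathcal S$ is convex, being the squared $L^2$-norm of an expression that is affine in $\bar u$. Hence $\mathcal E$ is a strictly convex quadratic functional, and the wellposedness theorem already guarantees that it attains a unique minimizer $\bar u=(0\partial_t,u)$.

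For such a functional the standard characterization applies, as was noted just before the statement: $\bar u$ minimizes $\mathcal E$ if and only if the Gâteaux derivative $D\mathcal E(\bar u)(\overline{\delta u})$ vanishes for every admissible variation $\overline{\delta u}=(0,\delta u)$. The next step is to rewrite this weak condition using the integration by parts carried out in the proof of Lemma~\ref{th:gradient}, where $D\mathcal E(\bar u)(\overline{\delta u})$ was expressed as the $L^2$-pairing of $\overline{\delta u}$ with the interior field $2(\partial_tI+g(\nabla^gI,u))(0,\nabla^gI)+2\beta\bar u+2\gamma\Delta^{\operatorname{B}}\bar u$, plus two boundary integrals, one over $[0,T]\times\partial M$ and one over the temporal caps $\{0,T\}\times M$.

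Requiring $D\mathcal E(\bar u)(\overline{\delta u})=0$ for all $\overline{\delta u}$ then splits into three independent conditions by the fundamental lemma of the calculus of variations: variations supported in the interior force the Euler--Lagrange equation $\operatorname{grad}\mathcal E(\bar u)=0$; variations with free trace on the spatial boundary force the natural condition $\bar\nabla_\nu\bar u=(0,\nabla_\nu u)=0$ on $[0,T]\times\partial M$; and variations on the caps, where $\nu=\partial_t$, force $\bar\nabla_{\partial_t}\bar u=0$ on $\{0,T\}\times M$. These are precisely the conditions defining $\dom(\operatorname{grad}\mathcal E)$, which yields the first system. For the restriction to $\dom_0(\mathcal E)$ the spatial boundary integral drops out automatically, since every admissible variation already vanishes on $[0,T]\times\partial M$; only the temporal Neumann condition survives as a natural condition, while the Dirichlet condition $\bar u=0$ on $[0,T]\times\partial M$ is imposed by the ambient space, giving the second system.

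The main obstacle I would flag is the regularity issue concealed in the passage from the weak to the strong form. The integration by parts of Lemma~\ref{th:gradient} was performed for smooth vector fields, whereas the minimizer is a priori only of class $H^1$; to give meaning to $\Delta^{\operatorname{B}}\bar u$ and to the boundary traces $\bar\nabla_\nu\bar u$ one must first show that $\bar u$ lies in $\dom(\operatorname{grad}\mathcal E)$, an $H^2$-type space. This I would obtain from standard elliptic regularity for the Bochner Laplacian on the compact manifold $\bar M$ with the prescribed boundary conditions, using that $I\in W^{1,\infty}(\bar M)$ keeps the zeroth-order data of the equation bounded. Once this regularity is in place, the integration by parts is justified and strict convexity identifies the resulting strong solution as the unique minimizer, completing the proof.
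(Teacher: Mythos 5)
Your proposal is correct and takes essentially the same route as the paper: the paper obtains this theorem in a single sentence from Lemma~\ref{th:gradient} combined with the strict convexity of $\mathcal E$, which is exactly your argument of splitting the vanishing first variation into the interior Euler--Lagrange equation plus the natural boundary conditions on $[0,T]\times\partial M$ and $\{0,T\}\times M$. If anything, you are more careful than the paper, which silently identifies the minimizer with an element of $\operatorname{dom}(\operatorname{grad}\mathcal E)$ and never addresses the passage from the a priori $H^1$ regularity of the minimizer to the $H^2$-type regularity needed to make sense of $\Delta^{\operatorname{B}}\bar u$ and the boundary traces --- the elliptic-regularity step you explicitly flag and sketch.
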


\section{The optimality conditions in local coordinates}\label{coordinates}

The aim of this section is to express the previously derived
optimality conditions in a local coordinate chart in order to obtain
an implementable version of the previous sections. To simplify the exposition, we will restrict ourselves to the case of mixed boundary conditions.
Note that this includes in particular the situation where $M$ is
a compact manifold without boundary.

Let $(V,v)$ be a local chart on $M$ with coordinate frame
$\partial_1$, $\partial_2$. In the following we will use the Einstein
summation convention in order to simplify the notation.

The main computational difficulty is the computation of the Bochner Laplacian
$\Delta^{\operatorname{B}}=\bar \nabla^* \bar \nabla$, as it involves the
adjoint of the covariant derivative. This is most easily done in an orthonormal
frame with respect to the metric $\bar{g}$.
We stress here that the natural frame $(\partial_t,\partial_1,\partial_2)$
is in general not orthonormal, because
$\bar{g}(\partial_1,\partial_2) = g(\partial_1,\partial_2) = \langle \partial_1 f,\partial_2 f\rangle_{\R^3}$
will be different from 0.
Note, however, that the construction of the metric implies that
$\bar{g}(\partial_t,\partial_i) = 0$ for $i = 1,2$.
We can therefore obtain an orthonormal frame by scaling the
vector $\partial_t$ to unit length and, for instance, applying the Gram--Schmidt
orthogonalization process to the (time and space dependent) vectors $\partial_1$, $\partial_2$.
Doing so, we obtain an orthonormal frame of the form
\[
\bar{X}_0 = (\frac1{\alpha}\partial_t,0),
\qquad \bar X_1=(0,X_1),
\qquad \bar X_2=(0, X_2)
\]
with space dependent vector fields $X_1(t,\cdot)$ and $X_2(t,\cdot)$
on $M\cap V$.
The (time and space dependent) coordinate change matrix between these two bases will 
be denoted by $\bar A$; we have
\begin{equation*}
\begin{pmatrix}  \bar X_0\\ \bar X_1\\\bar X_2 \end{pmatrix}
=\bar A\begin{pmatrix}  \partial_t\\ \partial_1\\ \partial_2 \end{pmatrix}=
\begin{pmatrix} \frac1{\alpha} &0 &0 \\ 0 & a^1_1& a^2_1\\  0 & a^1_2& a^2_2\end{pmatrix}
\begin{pmatrix}  \partial_t\\ \partial_1\\ \partial_2 \end{pmatrix}\,.
\end{equation*}
Note that the coefficient function $a_1^2$ will be the constant 0
if the Gram--Schmidt process is used for orthogonalization.

In the orthonormal frame $\{\bar{X}_i\}$, the norm of $\bar{\nabla}\bar{u}$ can
be written as
\[
\bar{g}^1_1(\bar{\nabla}\bar{u},\bar{\nabla}\bar{u})
= \sum_i \bar{g}(\bar{\nabla}_{\bar{X}_i}\bar{u},\bar{\nabla}_{\bar{X}_i}\bar{u}),
\]
where $\bar{\nabla}_{\bar{X}_i}\bar{u}$ denotes the covariant derivative
of the vector field $\bar{u}$ along $\bar{X}_i$. 
Writing $\bar{u}$ as $\bar{u}^j\bar{X}_j$, the covariant derivative
can be computed as
\begin{equation*} 
  \bar{\nabla}_{\bar{X}_i} \bar{u} = \left( \bar{X}_i \bar{u}^j + \bar{u}^k \bar{\omega}_{ik}^j \right) \bar{X}_j\,,
\end{equation*}
where $\bar{\omega}_{ik}^j$ are the \emph{connection coefficients}.
These are defined by the equations
\begin{equation*} 
  \bar{\omega}^m_{ik} \bar{X}_m = \bar{\nabla}_{\bar{X}_i} \bar{X}_k.
\end{equation*}

In order to actually compute the connection coefficients, we use the
fact that they are related to the Christoffel symbols $\bar{\Gamma}_{k\ell}^i$ of $\bar{g}$
with respect to $(\partial_t,\partial_1,\partial_2)$ via
\begin{equation*}
  \bar \omega^j_{ik} = \left( \bar a^\ell_i \partial_\ell \bar a^m_k 
    + \bar a^\ell_i \bar a^n_k \bar\Gamma^m_{\ell n} \right) \bar a^h_j \bar g_{mh}\,.
\end{equation*}
Finally, the Christoffel symbols are defined as
\[
\bar \Gamma_{k\ell}^i=\frac12 \bar g^{im}\big(\bar g_{mk,\ell}+\bar g_{m\ell,k}-\bar g_{k\ell,m}\big)\,,
\]
where $\{\bar g^{ik}\}$ denote the coefficients of the metric $\bar{g}$ with respect
to $(\partial_t,\partial_1,\partial_2)$, and $\{\bar g_{ik,\ell}\}$ denotes the
partial derivatives of the coefficients of the inverse metric $\bar{g}^{-1}$.

Applying these definitions to our special situation, we
arrive at the following explicit formula for the optimality conditions
in local coordinates:

\begin{theorem}[Optimality conditions in local coordinates]\label{th:opt_coord}
  In the chart $(V,v)$, the unique minimizer 
  \[
  \bar{u} = (0\partial_t,u)= \left( 0,u^i X_i \right)
  \]
  of the energy
  functional $\mathcal{E}$ defined in~\eqref{eq:Edef} 
  solves the equation
  \begin{align}
    \left( A^j + B^j_m u^m + C^{mj}_k \partial_m u^k + D^{\ell
        m} \partial_{\ell m} u^j \right)X_j &= 0, \quad \text{in }
     V\times(0,T),\label{eq:main_eq} \\
    \left( \frac{1}{\alpha} \partial_t u^j + u^k \bar{\omega}^j_{0k} \right)X_j &= 0, \quad \text{on } V\times\{0,T\},
    \label{eq:neuman_coord}
  \end{align}
  where
  \begin{align*}
    A^j	&= \partial_t I \partial_i I g^{ik} b^j_k, \\
    B^j_m &= \partial_\ell I a^\ell_m \partial_i I g^{ik} b^j_k + \beta \delta^j_m - \gamma \textstyle\sum_i( \bar a^\ell_i \partial_\ell \bar\omega^j_{im} + \bar\omega^n_{im} \bar\omega^j_{in} ) - \frac{\gamma}{\alpha} \bar\omega^j_{0m}\bar{\Gamma}^n_{n0} , \\
    C^{mj}_k &= \textstyle -\gamma \sum_i (\delta^j_k \bar a^\ell_i \partial_\ell \bar a^m_i + 2\bar\omega^j_{ik} \bar a^m_i) -\frac{\gamma}{\alpha} \delta^j_k \bar a^m_0 \bar{\Gamma}^n_{n0}  ,\\
    D^{\ell m} &= \textstyle -\gamma \sum_i \bar a^\ell_i \bar a^m_i,
  \end{align*}
  and $b^j_k$ is the $X_j$-component of $\partial_k$.

  The connection coefficients $\bar{\omega}_{ik}^j$ of $\{X_i\}$ are given by
  \begin{equation}\label{eq:connection}
  \bar{\omega}^j_{ik} = \begin{cases}
    0, & \text{ if } j = 0 \text{ and either } i = 0 \text{ or } k = 0,\\
    \alpha a_i^\ell a_i^n \bar{\Gamma}_{\ell n}^0, & \text{ if } j = 0 \text{ and } i, k \neq 0,\\
    0, & \text{ if } j\neq 0 \text{ and } i=k = 0,\\
    \frac{1}{\alpha}\bigl(\partial_t a_k^m + a_k^n \bar{\Gamma}_{0n}^m\bigr)a_j^h g_{mh}, & \text{ if } i = 0 \text{ and } j,k\neq 0,\\
    \frac{1}{\alpha}a_i^\ell a_j^h g_{mh}\bar{\Gamma}_{\ell 0}^m, & \text{ if } k = 0 \text{ and } j,i \neq 0,\\
    \left( a^\ell_i \partial_\ell a^m_k + a^\ell_i a^n_k \Gamma^m_{\ell n} \right) a^h_j g_{mh}
    & \text{ if } j,i,k \neq 0.
  \end{cases}
  \end{equation}

  Moreover, the Christoffel symbols $\bar{\Gamma}_{ik}^j$ have the form
  \begin{equation}\label{eq:Christoffel}
  \bar{\Gamma}_{ik}^j = \begin{cases}
    0, & \text{ if } j = 0 \text{ and either } i = 0 \text{ or } k = 0,\\
    -\dfrac{\partial_t g_{ik}}{2\alpha^2}, & \text{ if } j = 0 \text{ and } i, k \neq 0,\\
    0, & \text{ if } j\neq 0 \text{ and } i=k = 0,\\
    g^{j\ell} \partial_t g_{k\ell}, & \text{ if } i = 0 \text{ and } j,k\neq 0,\\
    g^{j\ell} \partial_t g_{i\ell}, & \text{ if } k = 0 \text{ and } j,i \neq 0,\\
    2g^{j\ell} \langle \partial_{ik}f, \partial_\ell f\rangle_{\R^3}, & \text{ if } j,i,k \neq 0.
  \end{cases}
  \end{equation}
\end{theorem}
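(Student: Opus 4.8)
The plan is to take the coordinate-free optimality condition from the preceding theorem,
\[
\Big(\partial_t I + g(\nabla^{g} I,u)\Big)(0,\nabla^{g} I) + \beta\bar u + \gamma\Delta^{\operatorname{B}}\bar u = 0,
\]
and rewrite it componentwise in the orthonormal frame $\{\bar X_0,\bar X_1,\bar X_2\}$, afterwards replacing each frame derivative $\bar X_i$ by the coordinate expression $\bar a^\ell_i\partial_\ell$. Because the frame is $\bar g$-orthonormal, the $\bar X_j$-component of any vector is obtained simply by pairing with $\bar X_j$, which keeps the calculation tractable. Two facts are used throughout: the time component vanishes, $\bar u = u^i\bar X_i$ with $\bar u^0 = 0$, and the metric is block diagonal, $\bar g_{00} = \alpha^2$, $\bar g_{0i} = 0$, $\bar g_{ij} = g_{ij}$.

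The similarity and zeroth-order regularization terms are immediate. Writing $\partial_k = b^j_k X_j$ gives $\nabla^g I = g^{ik}\partial_i I\, b^j_k X_j$, so the residual $\partial_t I + g(\nabla^g I,u)$, in which $g(\nabla^g I,u) = \partial_\ell I\, a^\ell_m u^m$, multiplied by the $X_j$-component $g^{ik}\partial_i I\, b^j_k$ of the gradient, produces exactly $A^j$ together with the first summand of $B^j_m$, while $\beta\bar u$ contributes $\beta\delta^j_m$. All remaining content of the theorem is therefore encoded in $\gamma\Delta^{\operatorname{B}}\bar u$.

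The core of the argument is the evaluation of $\Delta^{\operatorname{B}}\bar u = \bar\nabla^*\bar\nabla\bar u$ in the frame, where it equals
\[
\bar\nabla^*\bar\nabla\bar u = -\sum_i\Big(\bar\nabla_{\bar X_i}\bar\nabla_{\bar X_i}\bar u - \bar\nabla_{\bar\nabla_{\bar X_i}\bar X_i}\bar u\Big).
\]
Iterating $\bar\nabla_{\bar X_i}\bar u = (\bar X_i\bar u^j + \bar u^k\bar\omega^j_{ik})\bar X_j$, substituting $\bar X_i = \bar a^\ell_i\partial_\ell$ and sorting by the order of the derivative acting on $u^k$ yields the three families of coefficients. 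The second-order part gives $D^{\ell m} = -\gamma\sum_i\bar a^\ell_i\bar a^m_i$, which by orthonormality ($\sum_i\bar a^\ell_i\bar a^m_i = \bar g^{\ell m}$) equals $-\gamma\bar g^{\ell m}$ and so recovers the expected principal symbol; the first-order part, built from the frame derivatives $\bar a^\ell_i\partial_\ell\bar a^m_i$, the cross terms $\bar\omega^j_{ik}\bar a^m_i$ and the connection trace coming from $\bar\nabla_{\bar\nabla_{\bar X_i}\bar X_i}$, gives $C^{mj}_k$; and the remaining zeroth-order part, built from $\bar X_i\bar\omega^j_{ik}$ and the products $\bar\omega^p_{ik}\bar\omega^j_{ip}$, gives the $\gamma$-contributions to $B^j_m$. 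The two geometric inputs are obtained by direct computation: the Christoffel symbols \eqref{eq:Christoffel} follow case by case from $\bar\Gamma^j_{ik} = \tfrac12\bar g^{jm}(\partial_i\bar g_{mk} + \partial_k\bar g_{mi} - \partial_m\bar g_{ik})$ and the block form of $\bar g$, and the connection coefficients \eqref{eq:connection} follow from $\bar\omega^j_{ik} = (\bar a^\ell_i\partial_\ell\bar a^m_k + \bar a^\ell_i\bar a^n_k\bar\Gamma^m_{\ell n})\bar a^h_j\bar g_{mh}$ by inserting the block form of $\bar A$ and of the Christoffel symbols.

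Finally, the interior equation \eqref{eq:main_eq} is the $\bar X_j$-component of the sum of all these pieces, and the boundary equation \eqref{eq:neuman_coord} is the coordinate form of the Neumann condition $\bar\nabla_{\partial_t}\bar u = 0$ on $\{0,T\}\times M$: since $\partial_t = \alpha\bar X_0$, this is equivalent to $\bar\nabla_{\bar X_0}\bar u = \big(\tfrac1\alpha\partial_t u^j + u^k\bar\omega^j_{0k}\big)\bar X_j = 0$. I expect the main difficulty to be organizational rather than conceptual: the lower-order terms of $\bar\nabla^*\bar\nabla$ must be tracked with great care, in particular the contribution of the correction term $\bar\nabla_{\bar\nabla_{\bar X_i}\bar X_i}\bar u$, whose connection trace $\sum_i\bar\nabla_{\bar X_i}\bar X_i$ is of divergence type and feeds the lower-order coefficients together with the terms carrying the factor $\tfrac1\alpha$, while the repeated use of $\bar u^0 = 0$ removes the time components. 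This bookkeeping, together with the explicit evaluation of \eqref{eq:connection} and \eqref{eq:Christoffel}, is exactly what is deferred to the appendix.
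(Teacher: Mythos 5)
Your high-level plan --- coordinates for $\operatorname{grad}\mathcal S$, the $\beta\delta^j_m$ contribution, expansion of $\gamma\Delta^{\operatorname{B}}\bar u$ in the frame $\{\bar X_i\}$, and reading \eqref{eq:neuman_coord} off the Neumann condition via $\partial_t=\alpha\bar X_0$ --- is the paper's plan, and the two geometric inputs \eqref{eq:Christoffel}, \eqref{eq:connection} are the same direct computations. Where you genuinely deviate is the central step. The paper (Lemma~\ref{le:BL1}) computes $\bar\nabla^*$ by explicit integration by parts against $\vol(\bar g)=\alpha\vol(g)\,dt$: in the time direction the $t$-dependence of $\vol(g)$ produces a drift term proportional to $\operatorname{Tr}(g^{-1}\partial_t g)\,\bar\nabla_{\bar X_0}\bar u$, and via $\operatorname{Tr}(g^{-1}\partial_t g)=\bar\Gamma^n_{n0}$ this is precisely the origin of the $\bar\Gamma^n_{n0}$-terms in $B^j_m$ and $C^{mj}_k$; in the space directions the paper simply uses $\bar\nabla_{\bar X_i}^*=-\bar\nabla_{\bar X_i}$. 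You instead invoke the intrinsic identity $\bar\nabla^*\bar\nabla\bar u=-\sum_i\bigl(\bar\nabla_{\bar X_i}\bar\nabla_{\bar X_i}\bar u-\bar\nabla_{\bar\nabla_{\bar X_i}\bar X_i}\bar u\bigr)$. That identity is correct, and its correction term does reproduce the drift, since by \eqref{eq:connection} and \eqref{eq:Christoffel} the time component of the connection trace is $\sum_{i\neq 0}\bar\omega^0_{ii}=\alpha\sum_{i\neq0}a^\ell_i a^n_i\bar\Gamma^0_{\ell n}=-\tfrac{1}{2\alpha}\operatorname{Tr}(g^{-1}\partial_t g)$.

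The gap is what happens to the rest of the correction term. The vector $\sum_i\bar\nabla_{\bar X_i}\bar X_i$ also has \emph{spatial} components, namely $\sum_{i\neq0}\nabla^g_{X_i}X_i$, and for a Gram--Schmidt frame these do not vanish in general: already for a flat metric in polar-type coordinates the orthonormal frame $X_1=\partial_r$, $X_2=\tfrac1r\partial_\theta$ gives $\nabla^g_{X_1}X_1+\nabla^g_{X_2}X_2=-\tfrac1r\partial_r\neq0$. Consequently your expression for $\Delta^{\operatorname{B}}\bar u$ contains, beyond everything in \eqref{eq:main_eq}, the additional first- and zeroth-order terms $\sum_{i,p\neq0}\bar\omega^p_{ii}\bigl(\bar a^m_p\partial_m u^j+u^k\bar\omega^j_{pk}\bigr)X_j$, which have no counterpart in the stated $C^{mj}_k$ and $B^j_m$. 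So the computation you defer as ``bookkeeping'' cannot terminate in the theorem's coefficients: you must either prove that these spatial connection-trace terms vanish (false for a general chart and frame) or end up with coefficients different from the stated ones. Be aware that this tension is not purely on your side: the paper's own spatial integration by parts silently discards the terms $\operatorname{div}_g(X_i)\bar\nabla_{\bar X_i}\bar u$, i.e.\ it treats the frame fields as divergence free, which is exactly the same discrepancy viewed from the other direction. A complete proof of the statement has to confront this point explicitly; your proposal does not --- it asserts the match with $B^j_m$, $C^{mj}_k$, $D^{\ell m}$ without checking it.
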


The derivations of this equation and of the coordinate expression of the connection symbols 
and the Christoffel symbols are postponed to the appendix.

%
%
\begin{figure}[h]
 \includegraphics[width=.19 \textwidth]{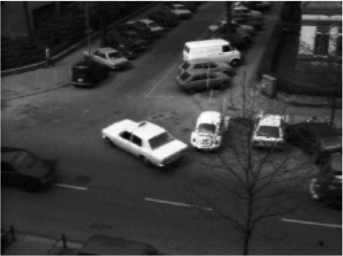}
 \includegraphics[width=.19 \textwidth]{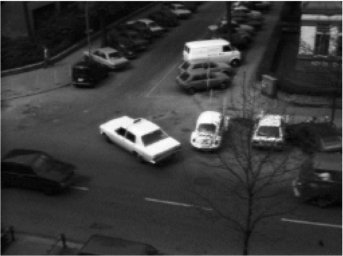}
 \includegraphics[width=.19 \textwidth]{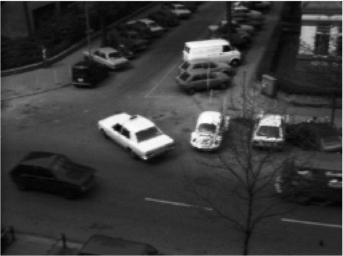}
 \includegraphics[width=.19 \textwidth]{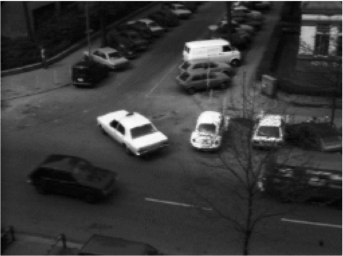}
 \includegraphics[width=.19 \textwidth]{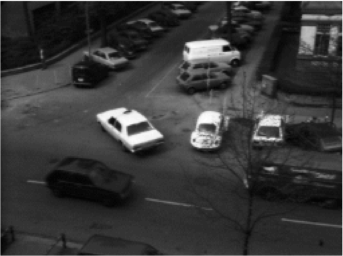}\vspace{.3cm}

 \includegraphics[width=.19 \textwidth]{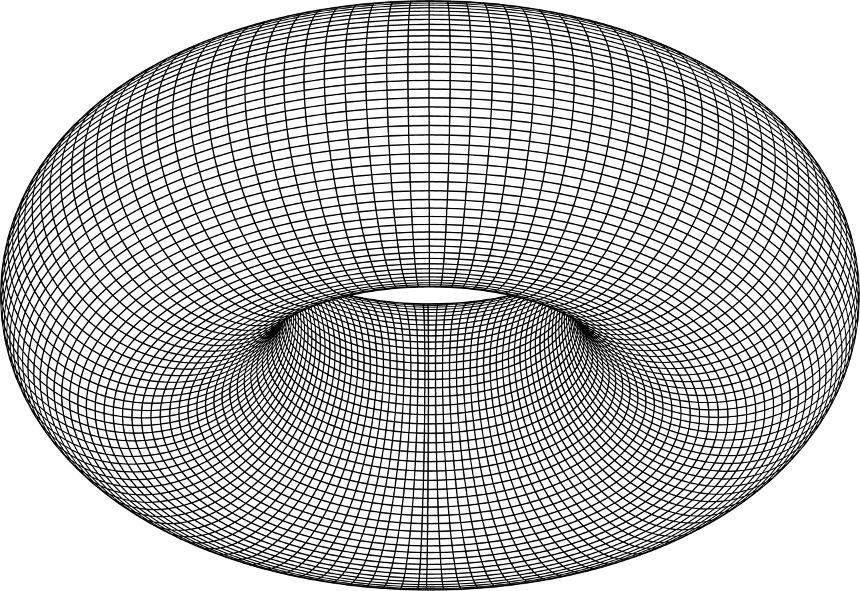}
 \includegraphics[width=.19 \textwidth]{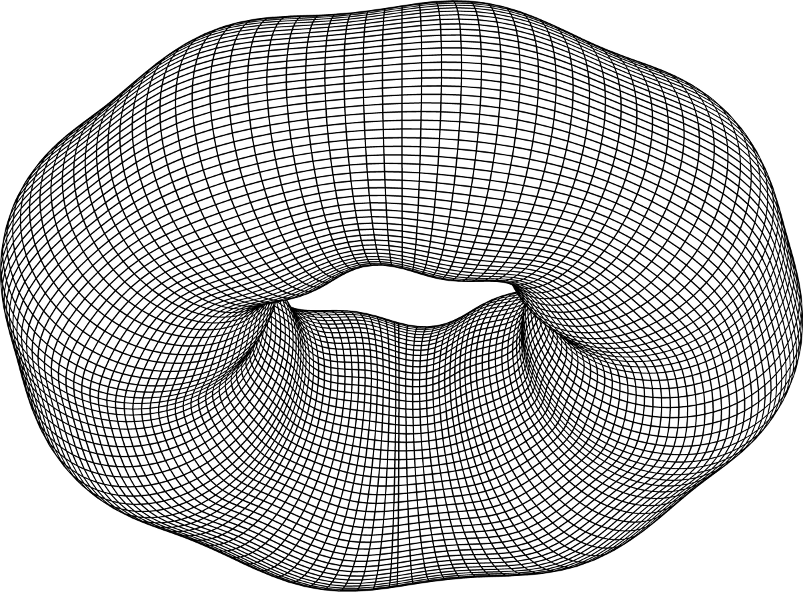}
 \includegraphics[width=.19 \textwidth]{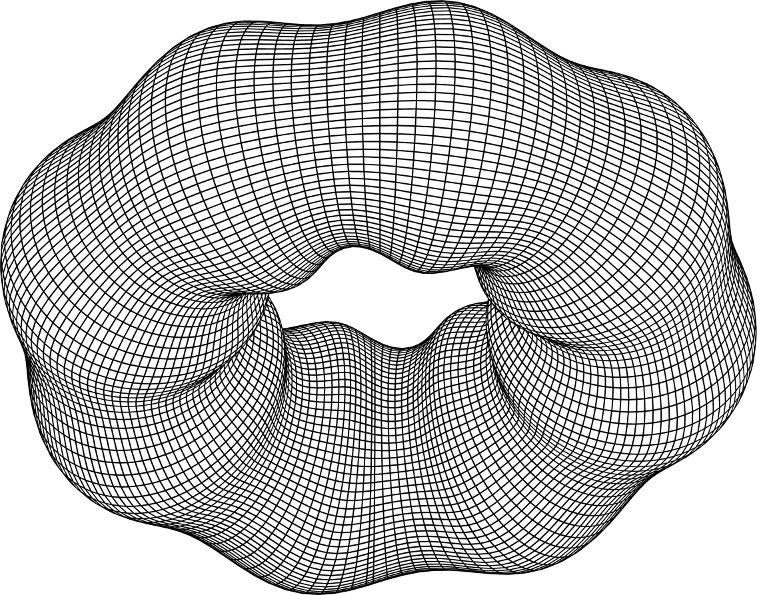}
 \includegraphics[width=.19 \textwidth]{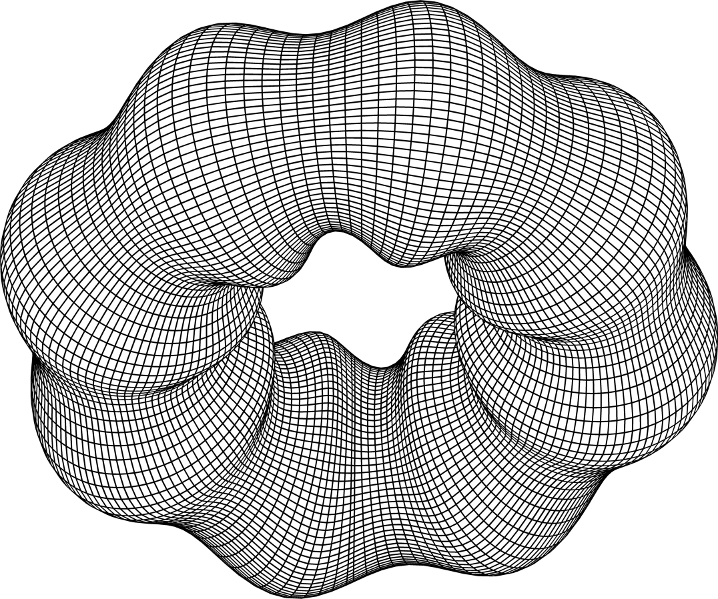}
 \includegraphics[width=.19 \textwidth]{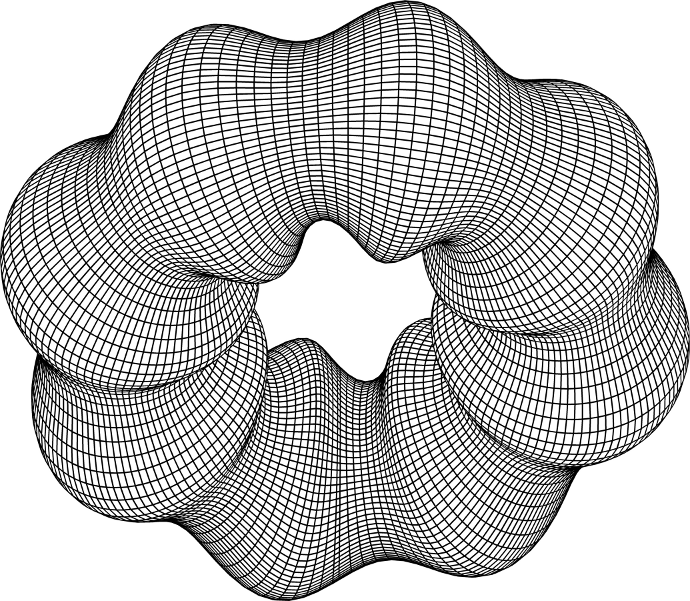}\vspace{.3cm}

 \includegraphics[width=.19 \textwidth]{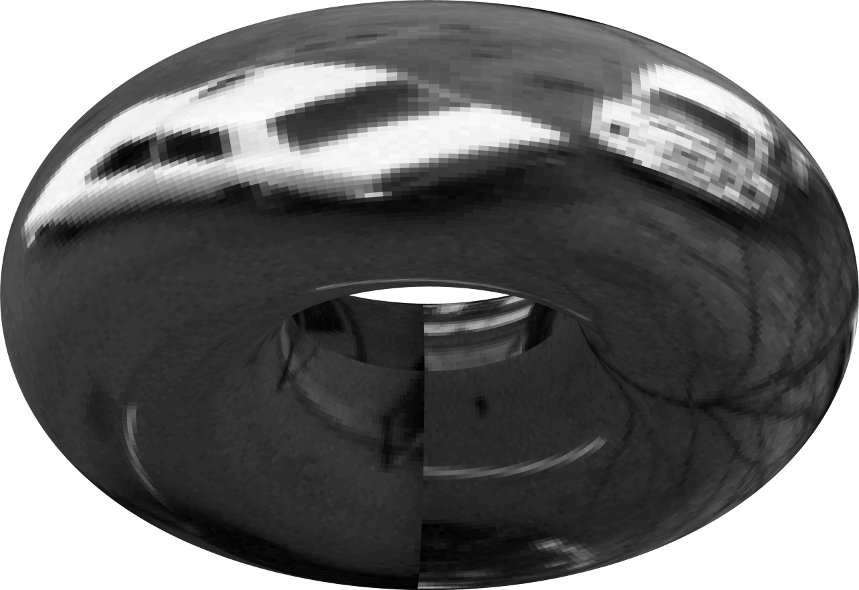}
 \includegraphics[width=.19 \textwidth]{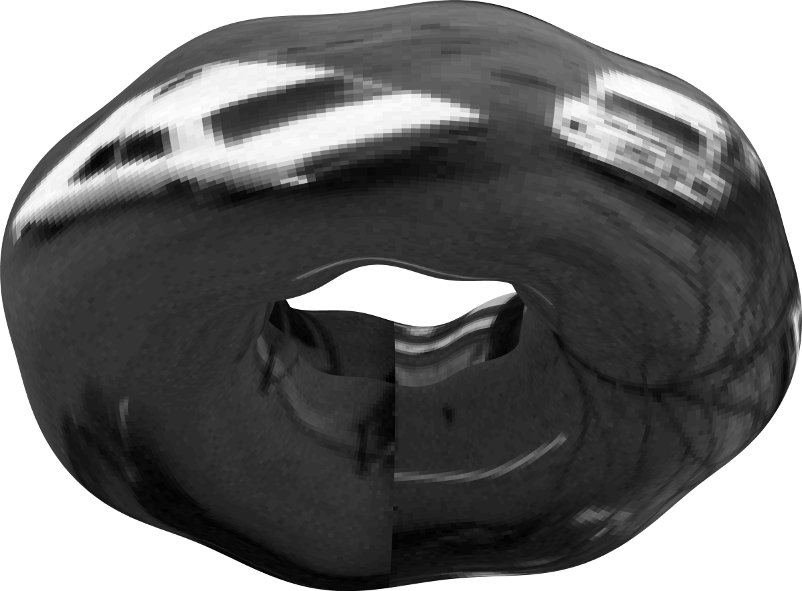}
 \includegraphics[width=.19 \textwidth]{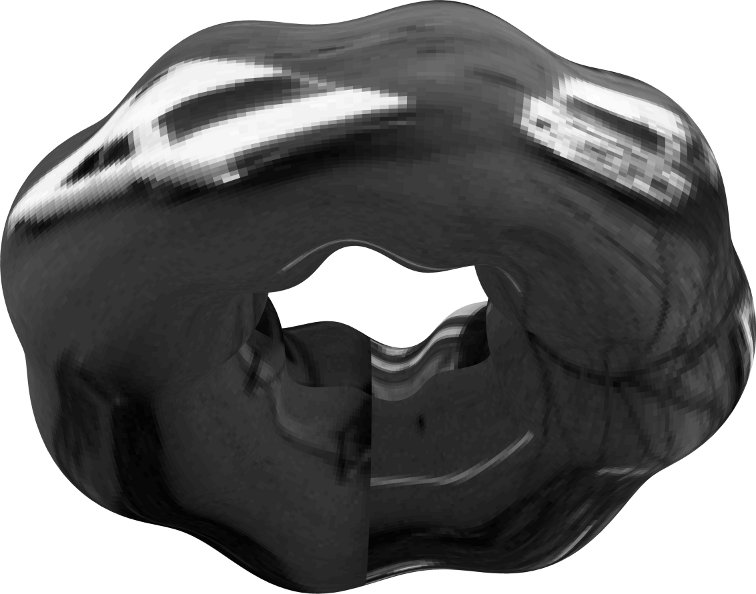}
 \includegraphics[width=.19 \textwidth]{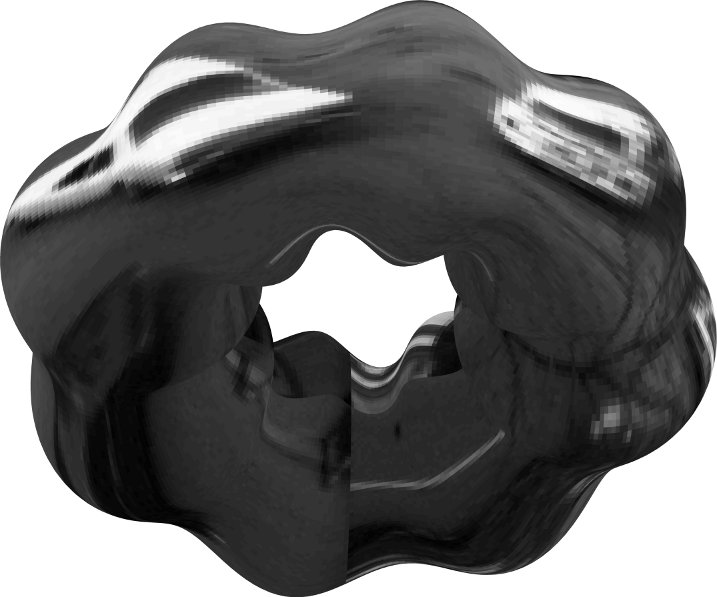}
 \includegraphics[width=.19 \textwidth]{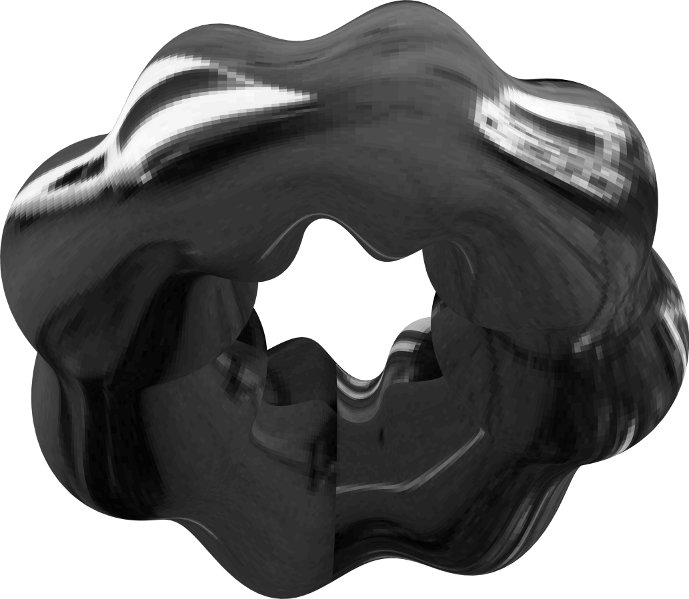}
 \caption{The data considered in Experiment I at frames 1, 6, 11, 16, and 20. Top row: the pulled back image sequence $I$. Middle row: the moving surface $\mathcal{M}_t$. Bottom row: the image sequence $\mathcal I$. }
\label{fig:e2data}
 \end{figure}

%
 
 \begin{figure}[ht]
 \includegraphics[width=.32 \textwidth]{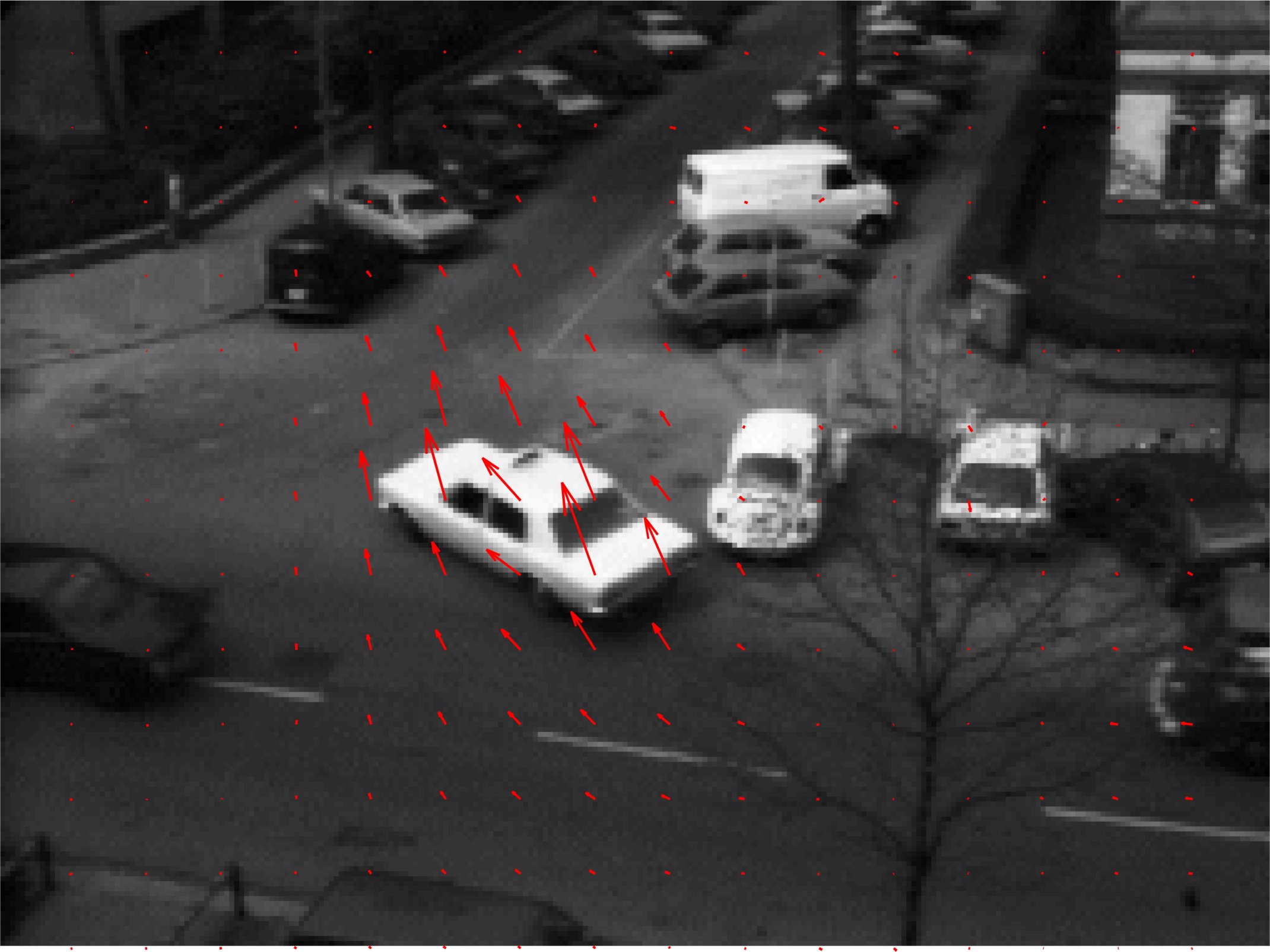}
 \includegraphics[width=.32 \textwidth]{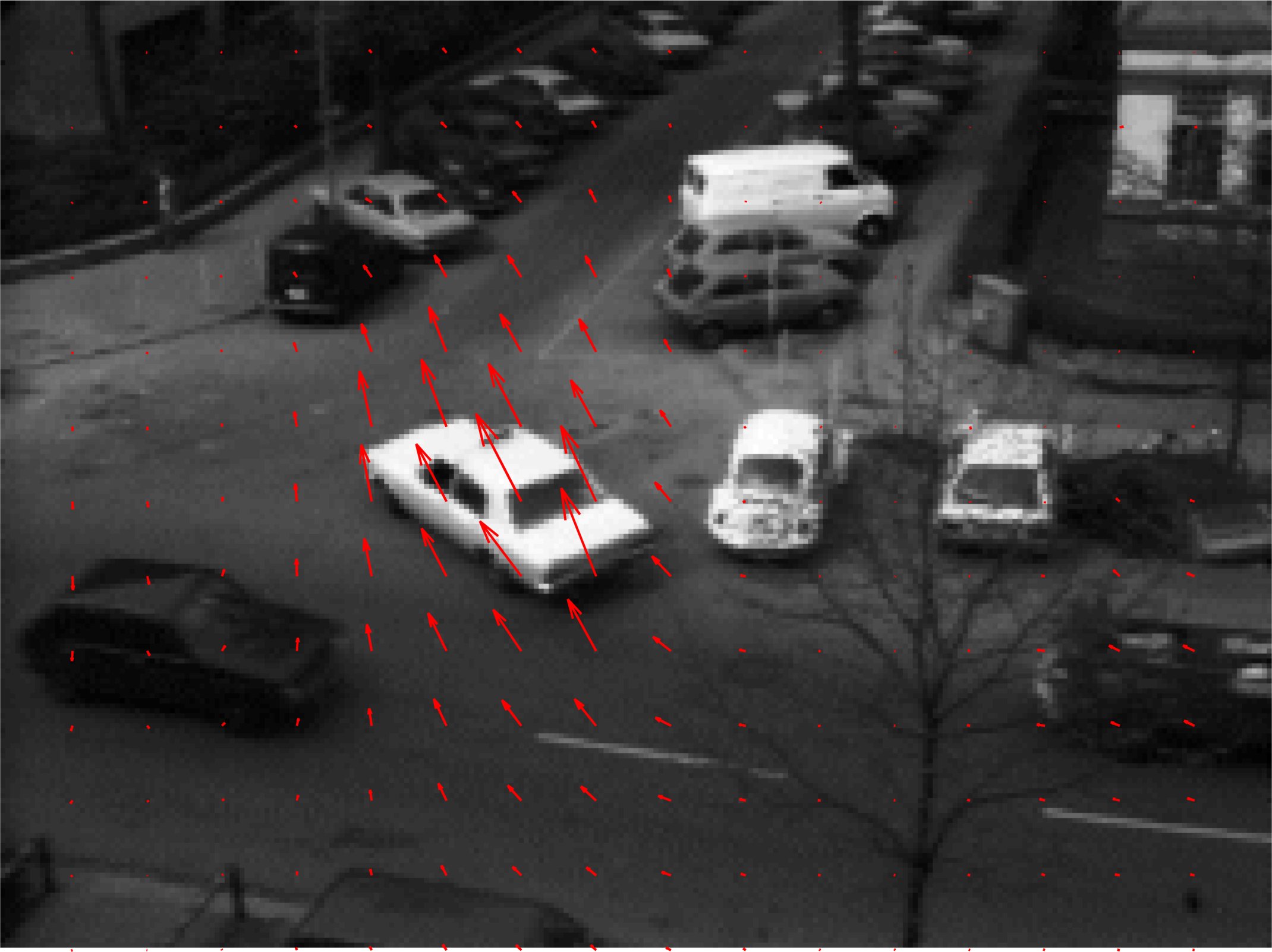}
 \includegraphics[width=.32 \textwidth]{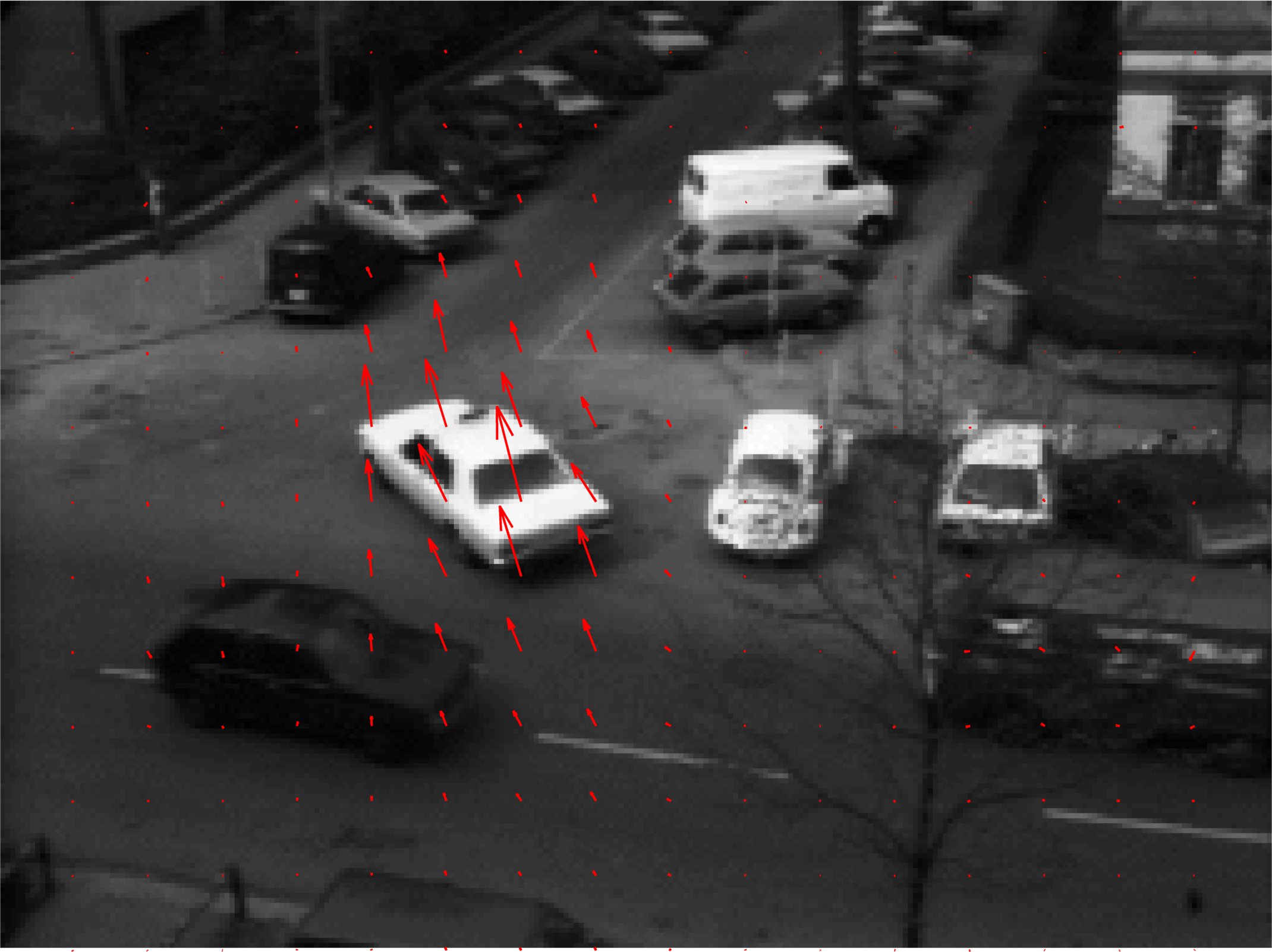}\vspace{.1cm}

 \includegraphics[width=.32 \textwidth]{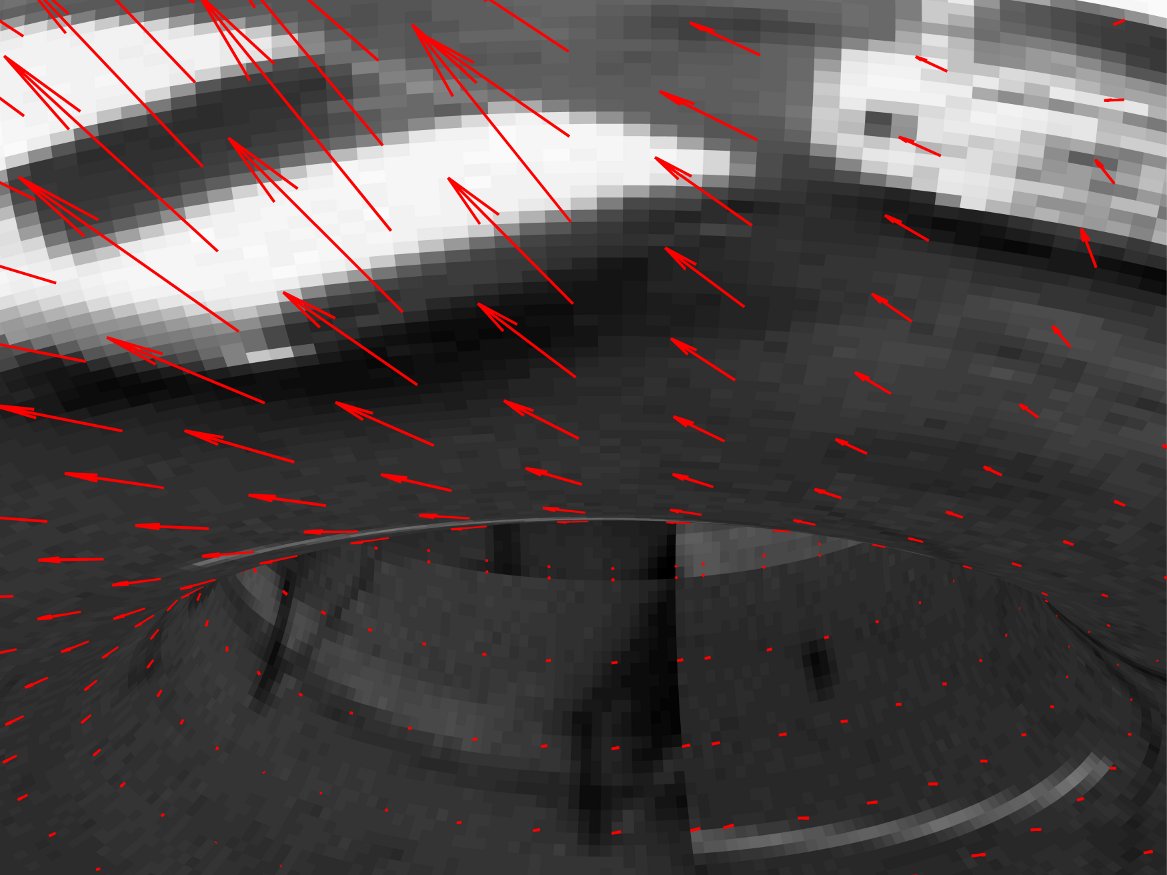}
 \includegraphics[width=.32 \textwidth]{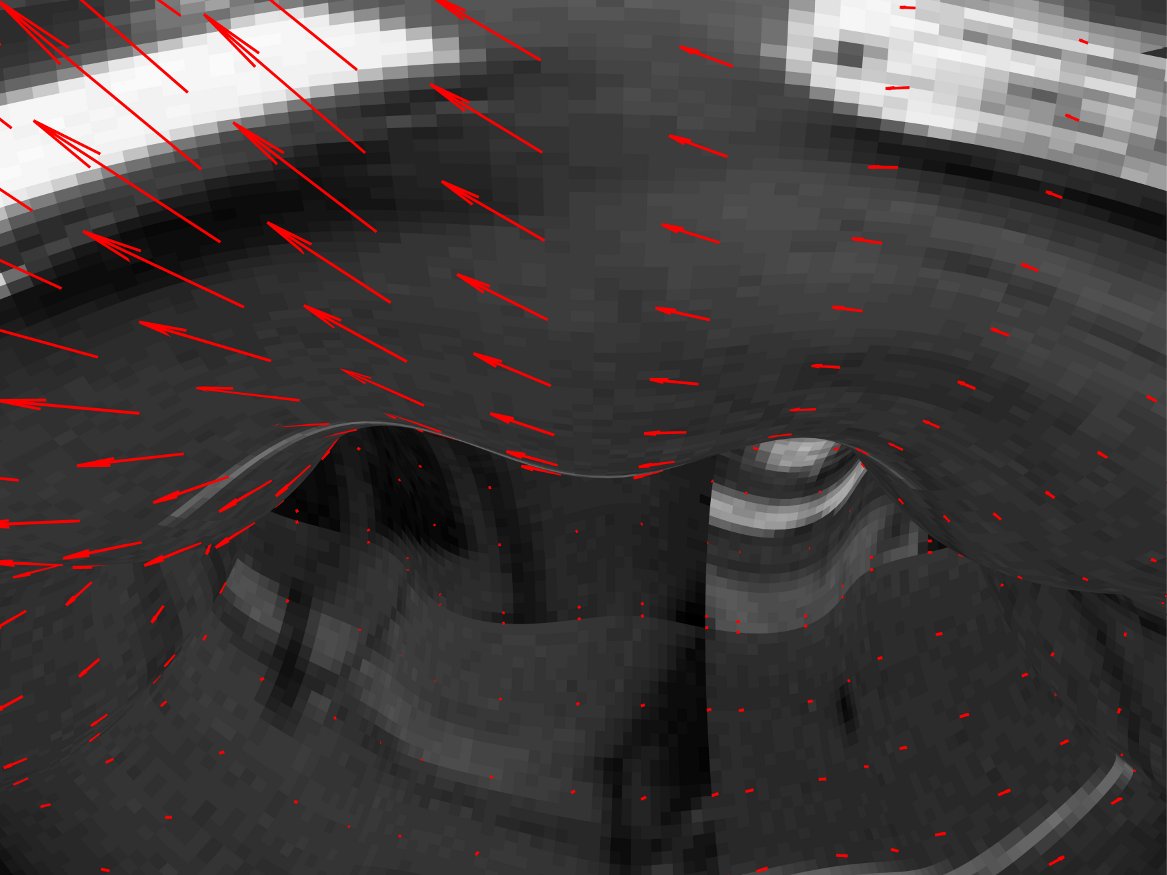}
 \includegraphics[width=.32 \textwidth]{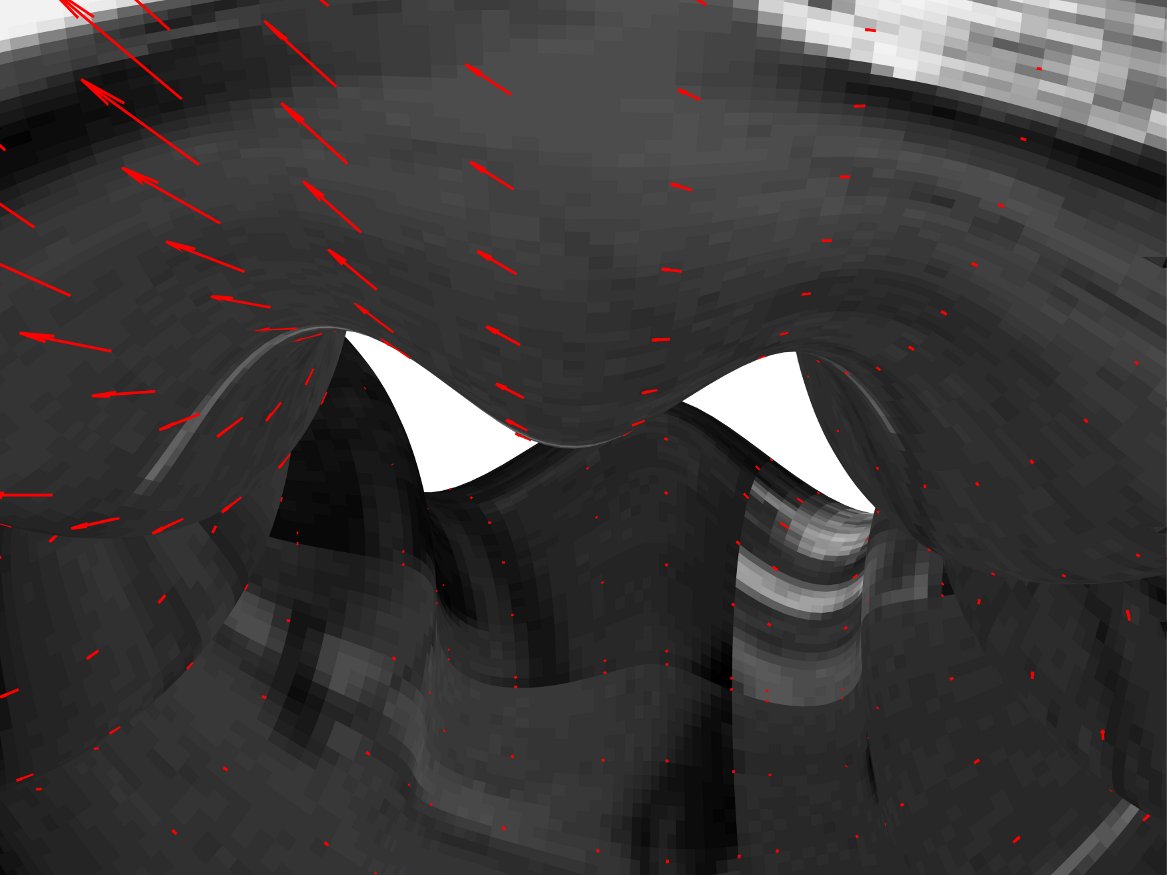}
 \caption{The optical flow vector field resulting from Experiment I at frames 1, 10, and 19. Top row: the pull back of the vector field. Bottom row: the vector field on the moving surface. The vectors have been scaled for better visibility.}
 \label{fig:e2arrows}
 \end{figure}

\section{Experimental Results}\label{experiments}
\paragraph{Numerical implementation.}
We illustrate the behaviour of the proposed model in two
experiments. In both of them the moving surface $\mathcal{M}_t \subset \mathbb{R}^3$ is parametrized globally by a function $f:[0,T]
\times M \to \mathbb{R}^3$, where $M\subset\mathbb{R}^2$ is a rectangular domain.
Note that, due to the chosen images, we can always set $\beta = 0$ and still have wellposedness, cf.~Remark~\ref{re:coercivity}.
We also conducted experiments with a positive value of $\beta$, which
yielded faster convergence of the numerical method. The main
difference to the results with $\beta=0$ were shortened flow fields.

We solve the optimality conditions from Thm.~\ref{th:opt_coord} with finite differences on a $k \times m \times n$ grid approximation of $\bar M$. Derivatives in all three directions are approximated by central differences and the resulting sparse linear system is solved with the standard Matlab implementation of the generalized minimal residual method (GMRES).

 
\begin{figure}
 \includegraphics[width=.24 \textwidth]{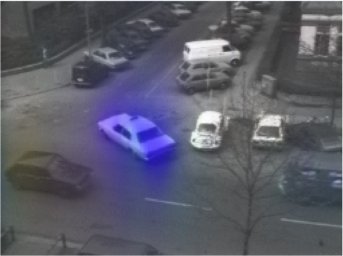}
 \includegraphics[width=.24 \textwidth]{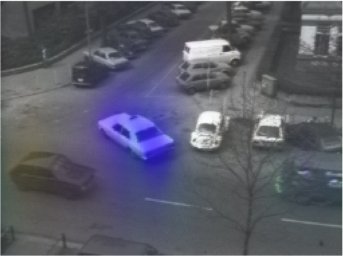}
 \includegraphics[width=.24 \textwidth]{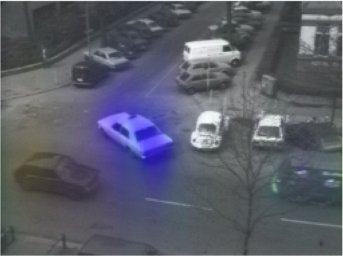}
 \includegraphics[width=.24 \textwidth]{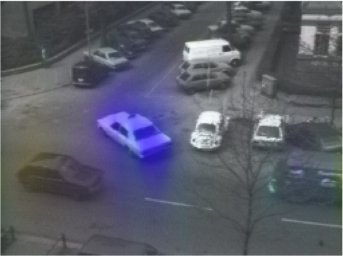}
\\
\includegraphics[width=.24 \textwidth]{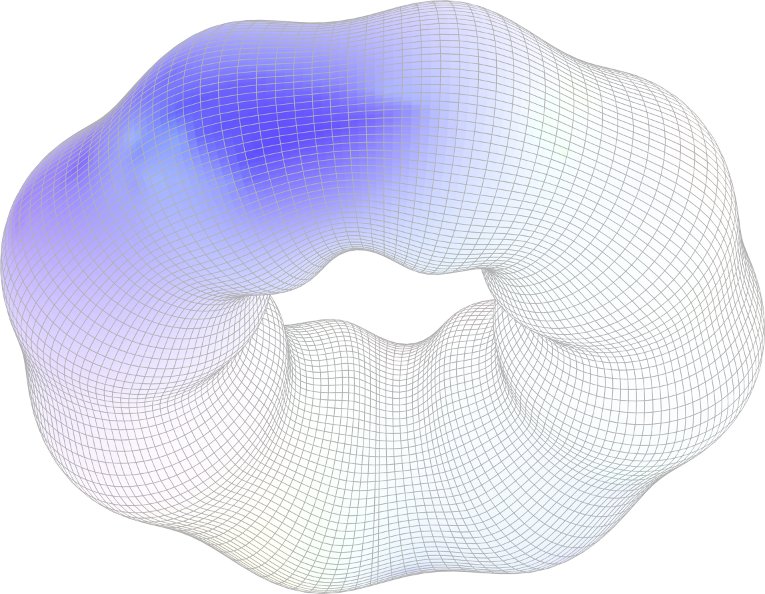}
\includegraphics[width=.24 \textwidth]{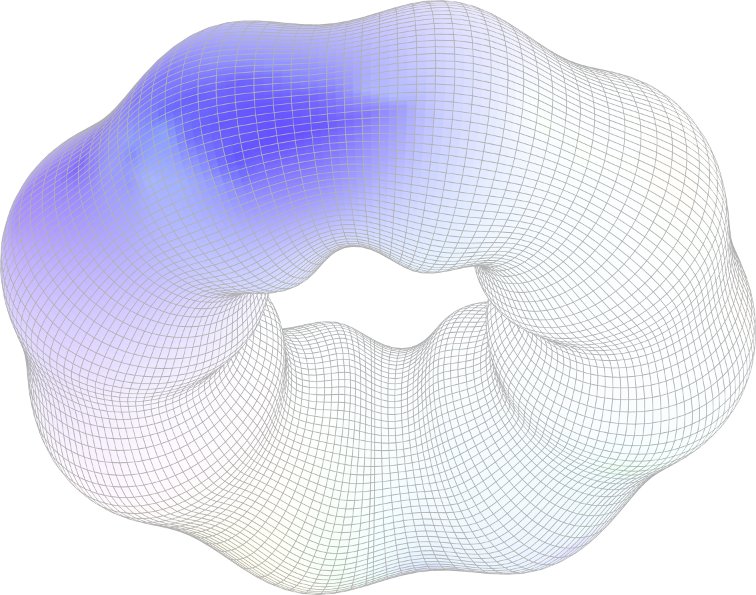}
\includegraphics[width=.24 \textwidth]{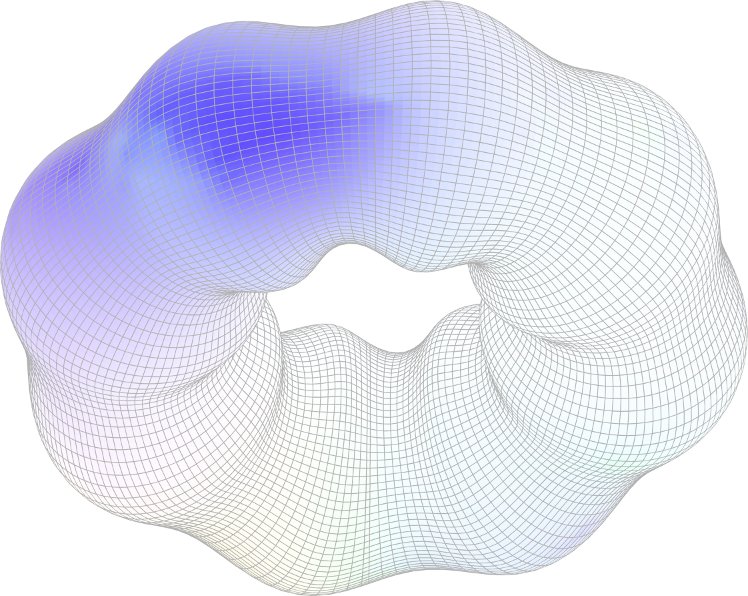}
\includegraphics[width=.24 \textwidth]{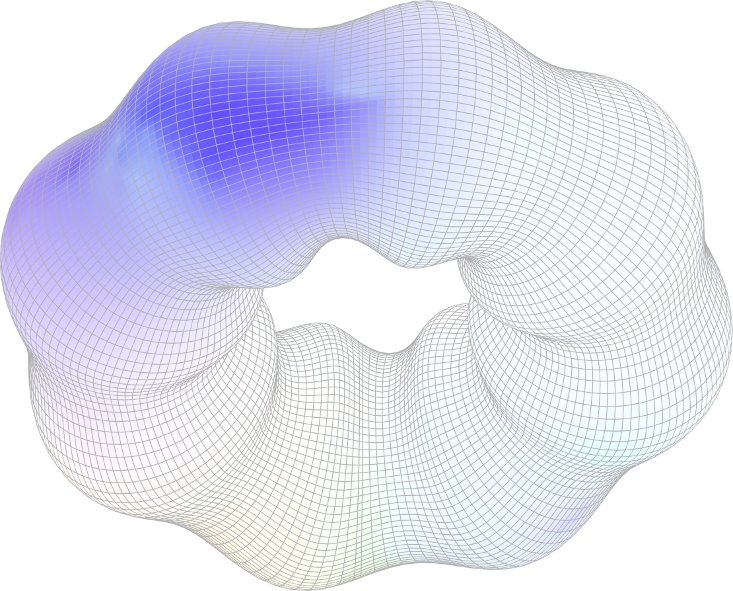}
\vspace{-0.2cm}
 \flushright{\includegraphics[width=.05 \textwidth]{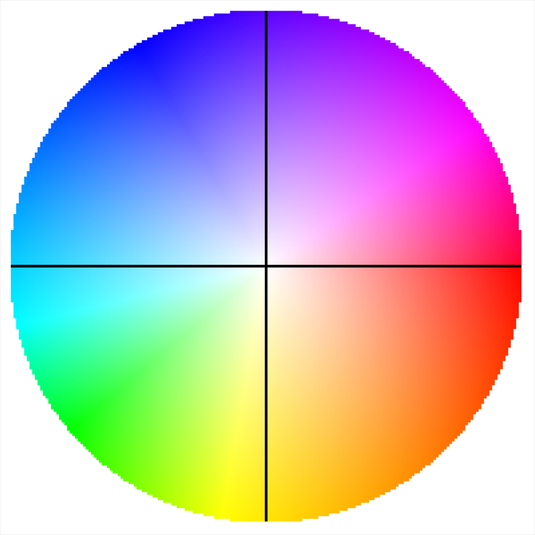} } 
  
 \caption{The color-coded optical flow vector field resulting from Experiment I at frames 10, 11, 12, and 13. First row: image sequence with pulled back vector field superimposed. Second row: vector field on the moving surface. The color wheel is shown at the very bottom.}
\label{fig:e2colors}
 \end{figure}

\begin{figure}
 \includegraphics[width=.24 \textwidth]{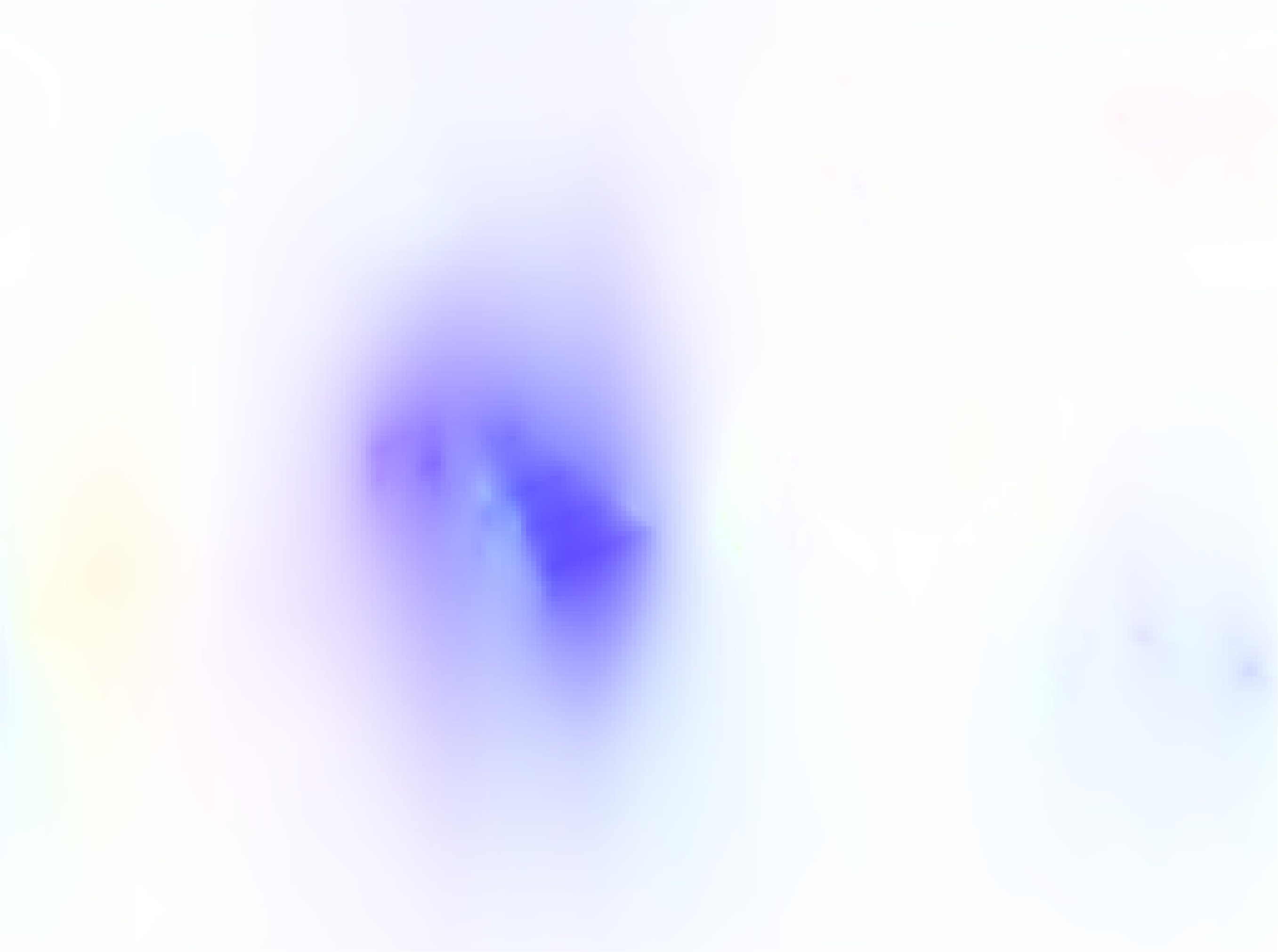}
 \includegraphics[width=.24 \textwidth]{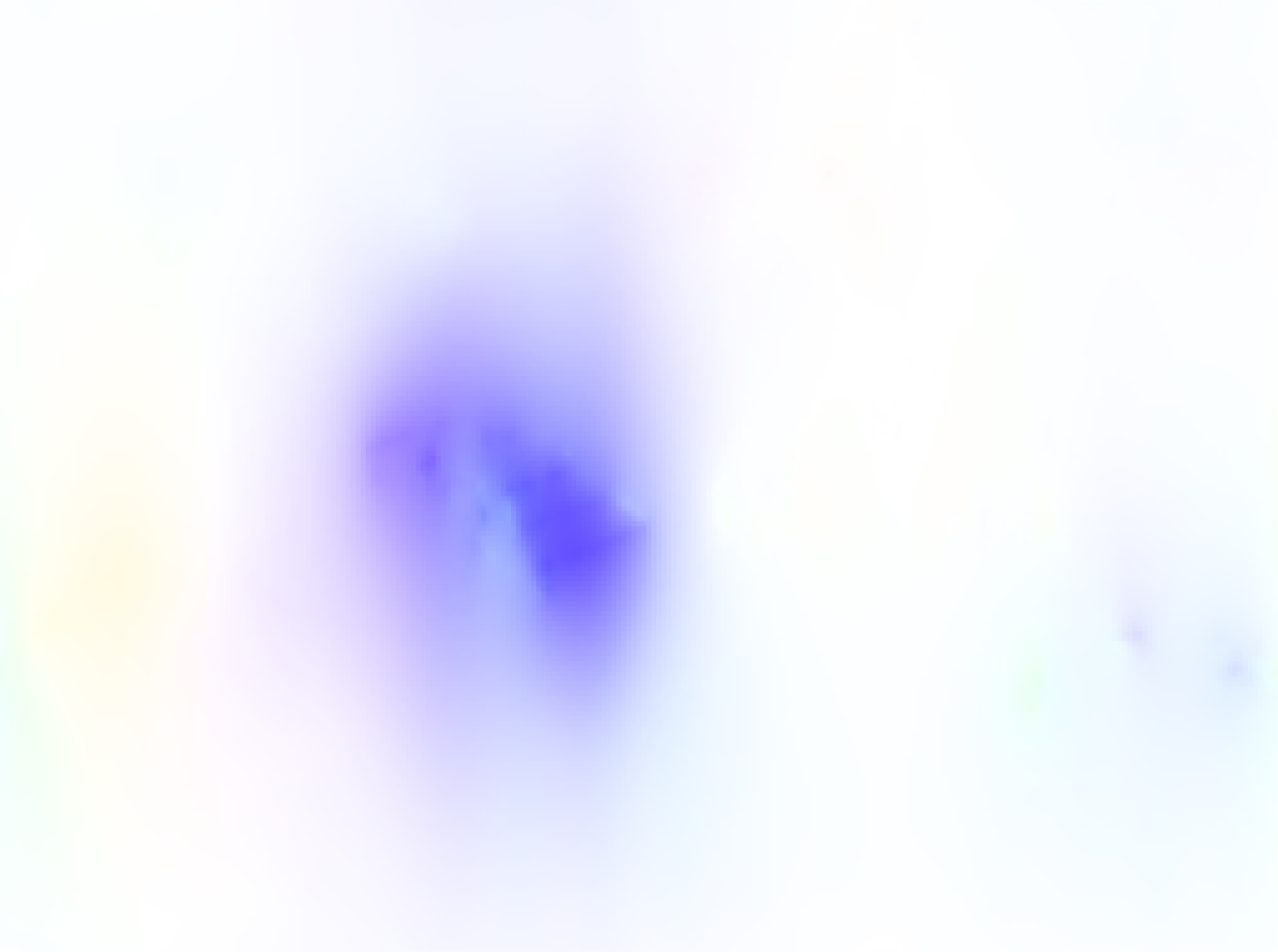}
 \includegraphics[width=.24 \textwidth]{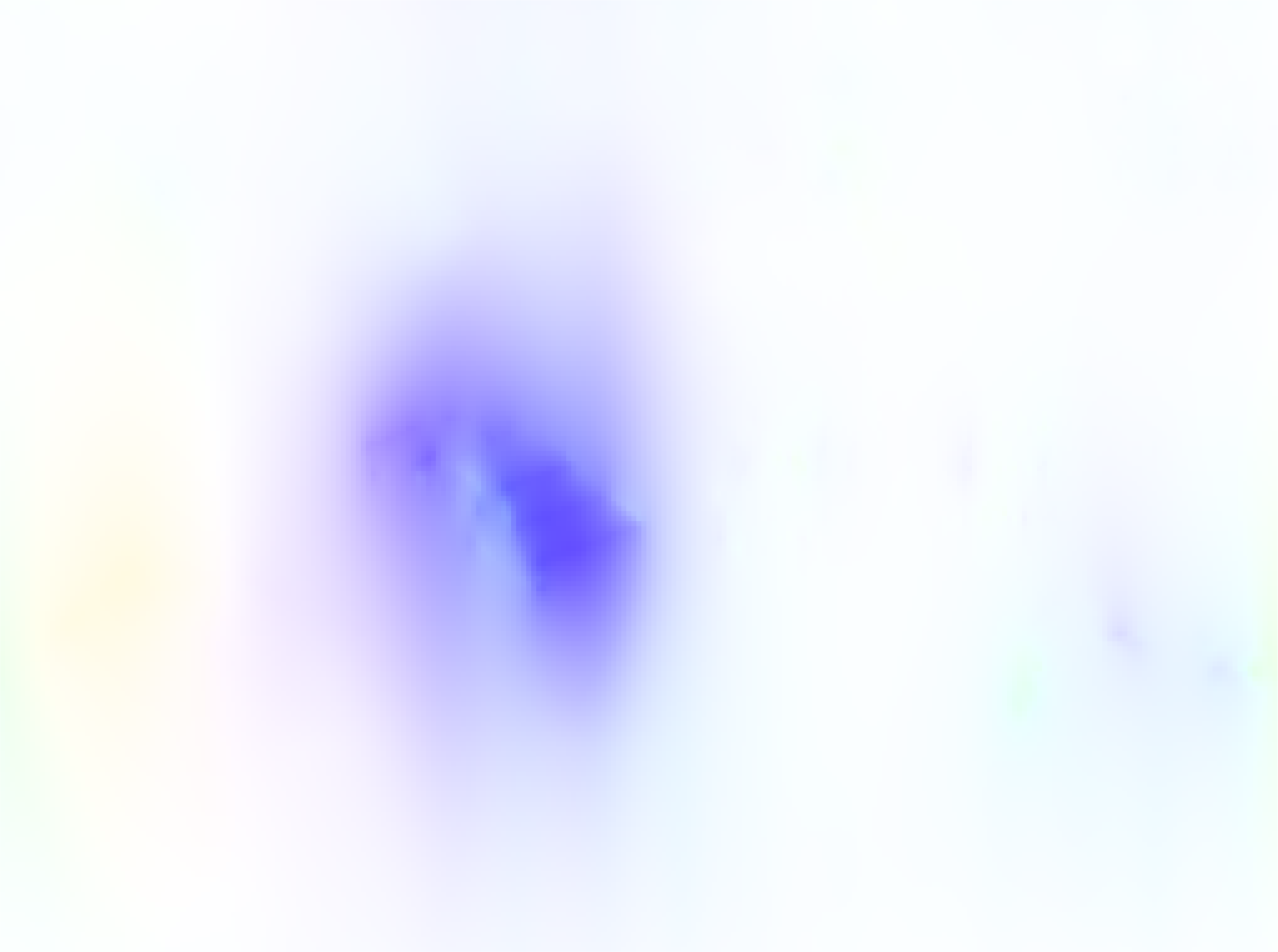}
 \includegraphics[width=.24 \textwidth]{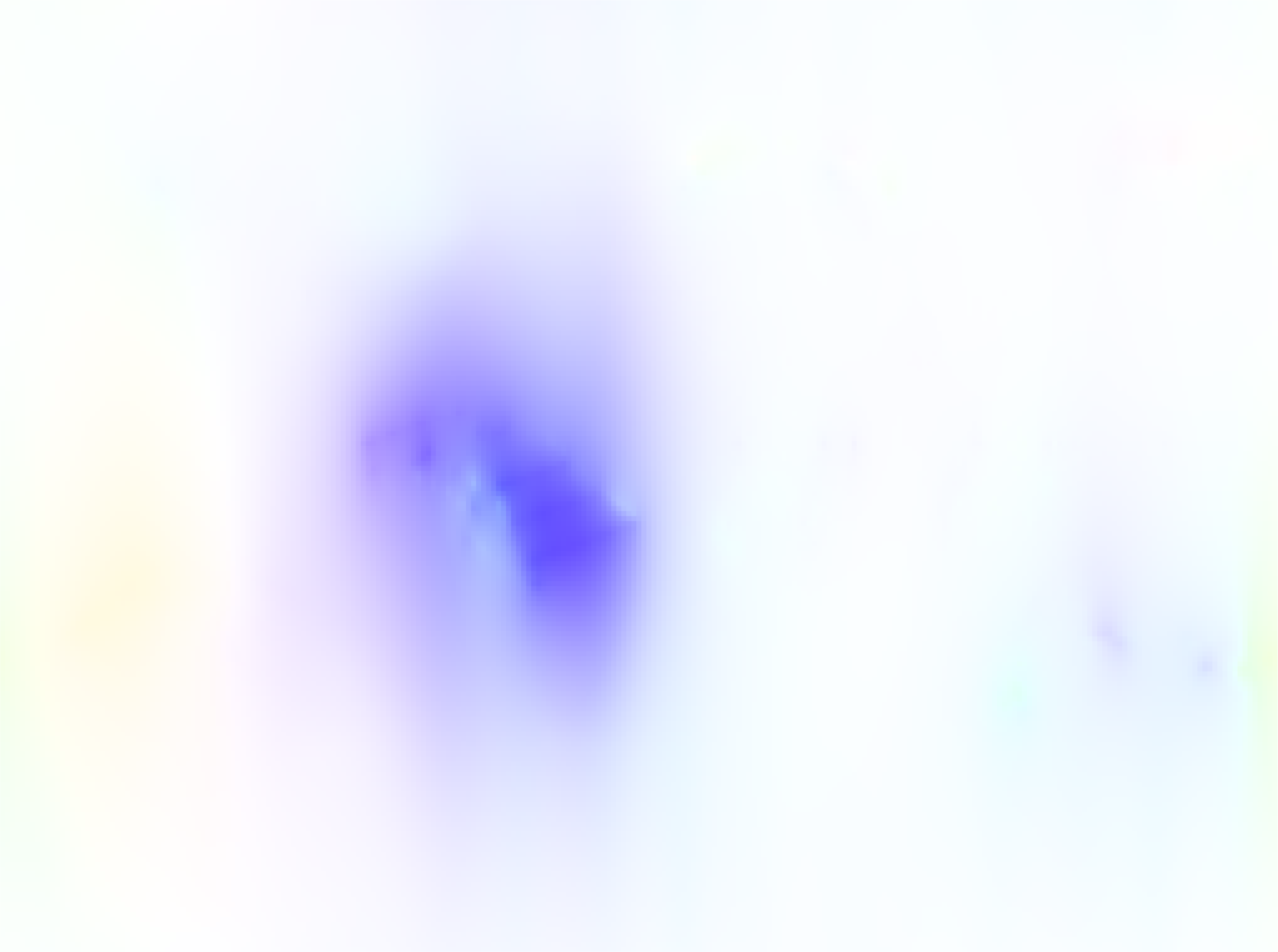}
\\
\includegraphics[width=.24 \textwidth]{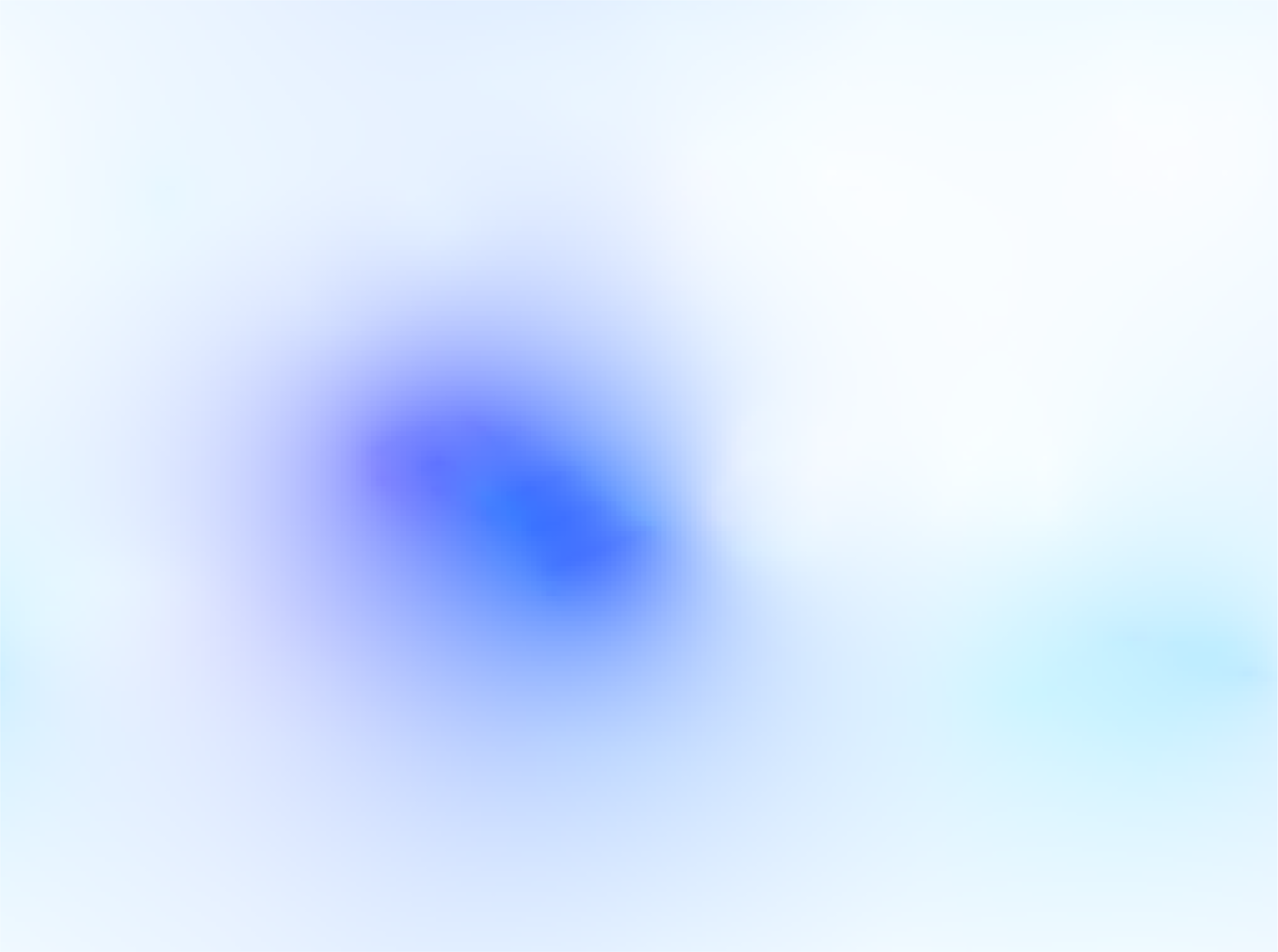}
\includegraphics[width=.24 \textwidth]{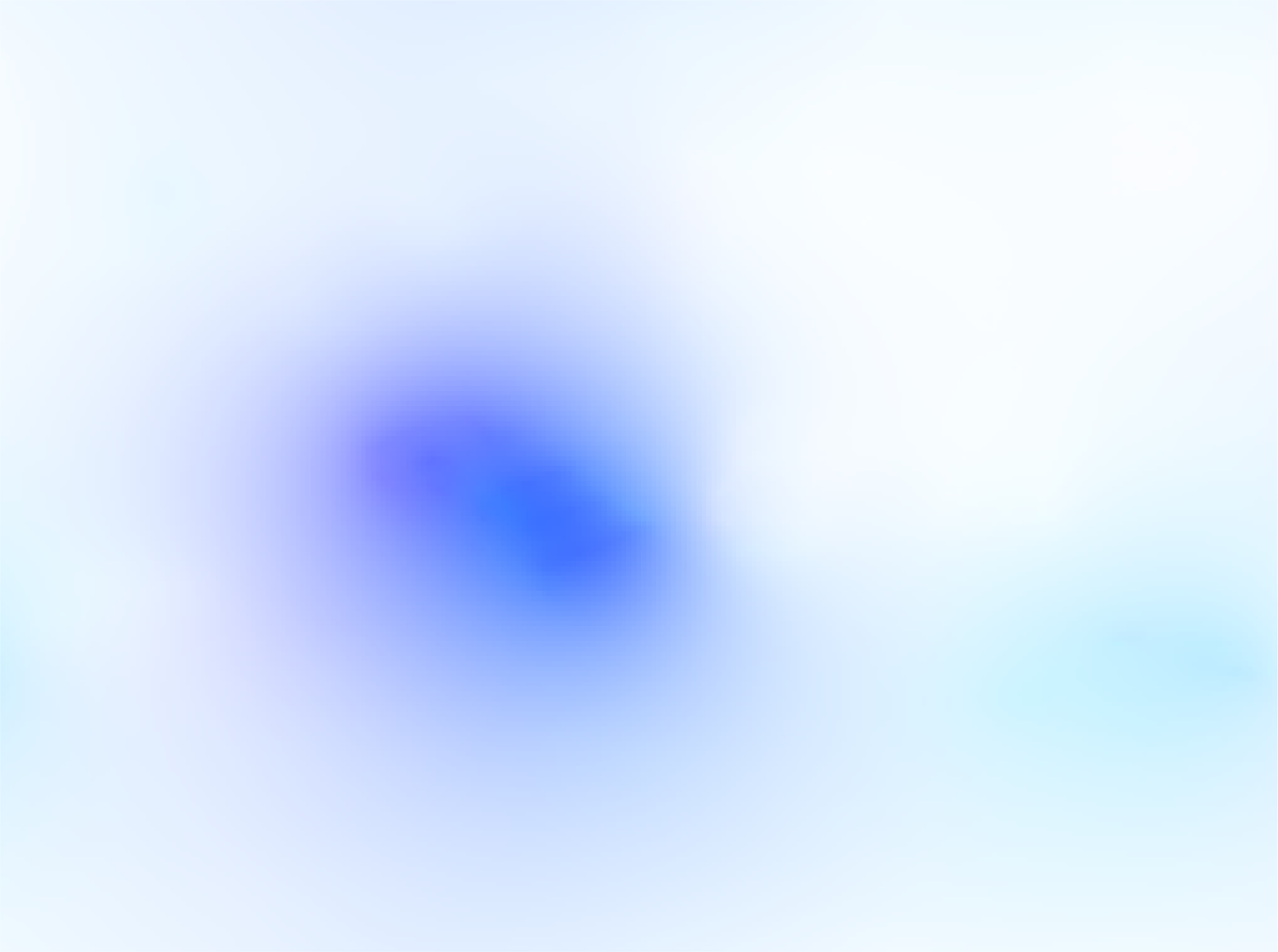}
\includegraphics[width=.24 \textwidth]{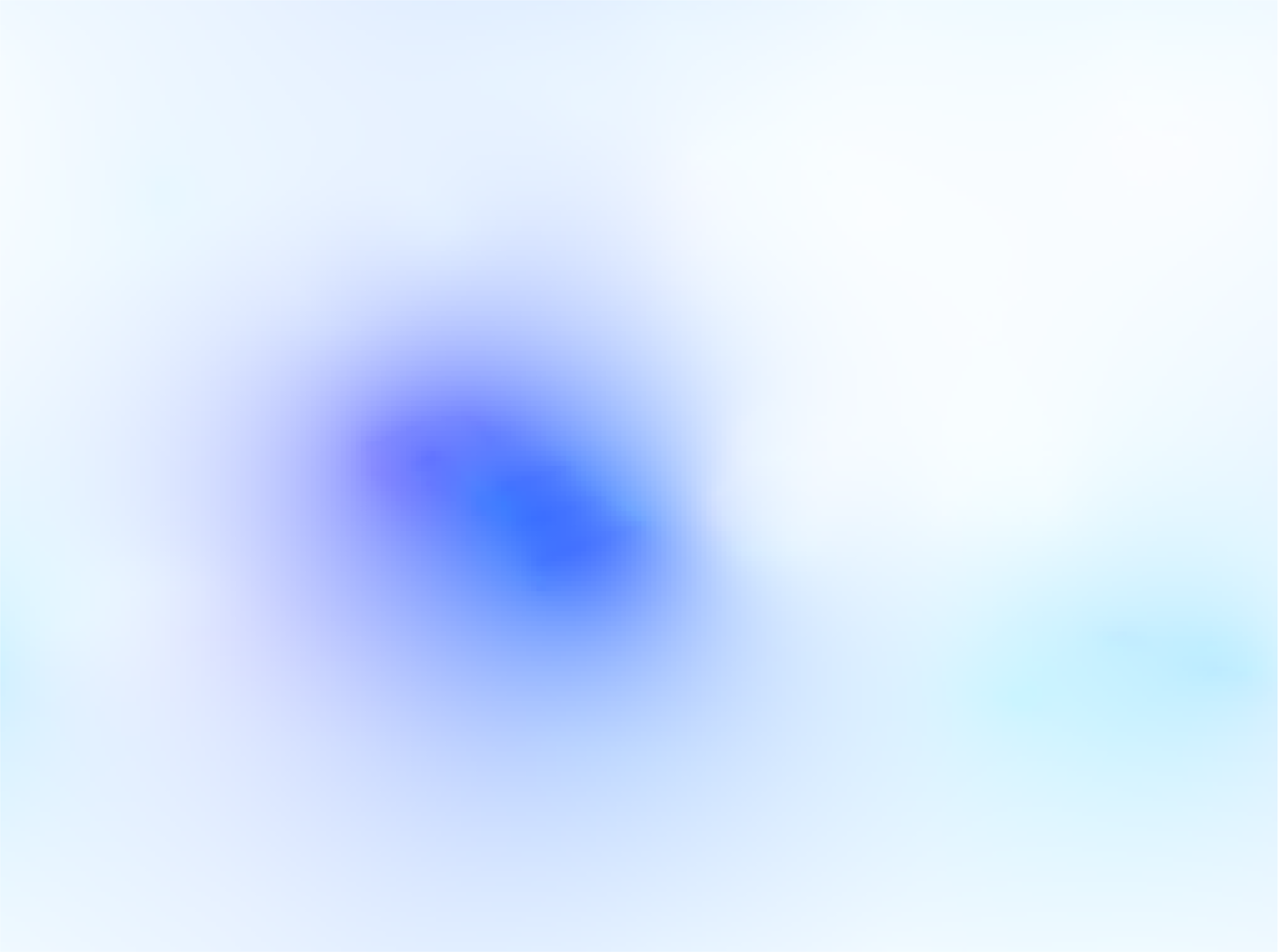}
\includegraphics[width=.24 \textwidth]{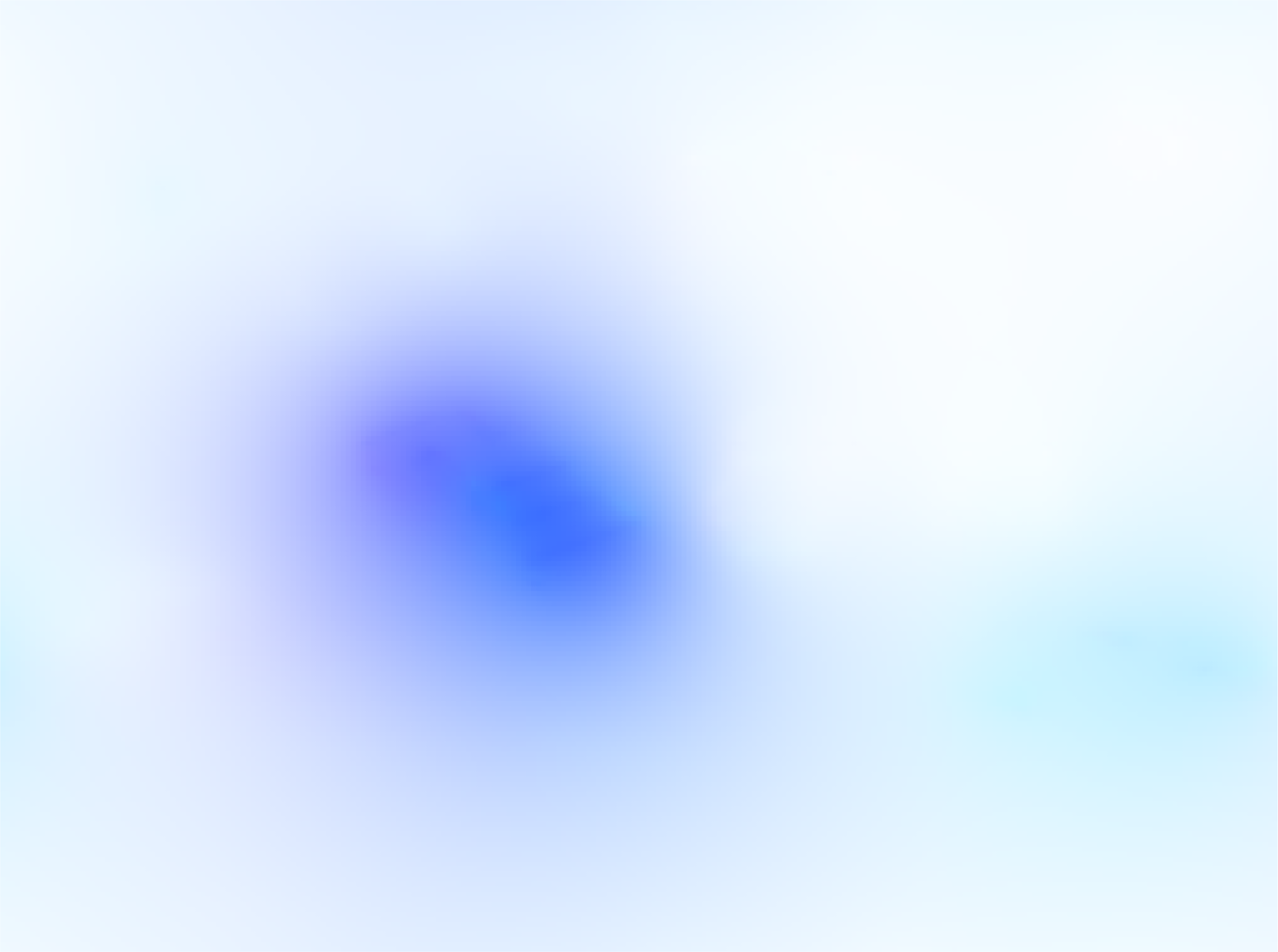}
\vspace{-0.2cm}
 \flushright{\includegraphics[width=.05 \textwidth]{figures/cw} }
 
 \caption{Comparison of the optical flow from Experiment I with the optical flow computed in the plane. First row: optical flow from Fig.~\ref{fig:e2colors}, first row, with removed image. Second row: optical flow computed in the plane with periodic boundaries. The color wheel is shown at the very bottom.}
\label{fig:e2comparison1}
\end{figure}

\paragraph{Experiment I, synthetic data.}
The first image sequence we apply our model to are $20$ frames of the well known Hamburg
Taxi sequence,\footnote{The movie can be dowloaded from
  \url{http://i21www.ira.uka.de/image_sequences/}.} scaled to the
unit interval. The sequence has a resolution of $255 \times 190$,
which leads to a total number of $9.7 \cdot 10^5$ grid points.

The surface we consider is a ring torus whose major circle turns into an ellipse while its tube of uniform thickness grows ripples over time. The corresponding embedding reads
\begin{equation*}
	f(t,x_1,x_2) =
		\begin{pmatrix}
			(R + \frac{t}{T} + r(t,x_1) \cos x_2) \cos x_1  \\
			(R + r(t,x_1) \cos x_2) \sin x_1  \\
			r(t,x_1) \sin x_2 
		\end{pmatrix},
\end{equation*}
where $R=2$, $r(t,x_1) = 1 + \frac{t}{5T} \sin 8x_1$, and $(x_1,x_2) \in [0,2\pi)^2$. In Fig.~\ref{fig:e2data} we show $I$, $\mathcal{M}_t$ and $\mathcal{I}$.

In Figs.~\ref{fig:e2arrows} and \ref{fig:e2colors} results for the parameter choice $\alpha=\gamma=1$ and $\beta=0$ are depicted. The finite difference step size $h$ was set to $1$ for all three directions. The GMRES algorithm was terminated after a maximum of $2000$ iterations with a restart every $30$ iterations. This led to a relative residual of $5.1 \cdot 10^{-3}$. In Fig.~\ref{fig:e2colors} we use the color coding from \cite{BakSchaLewRotBla11} to visualize the optical flow. This is done by applying it to the pulled back vector field first, and drawing the resulting color image onto $\mathcal{M}_t$ via $f$ afterwards.

Finally, we illustrate how the moving surface influences the optical flow vector field. To that end we repeat Experiment I on the flat torus with all parameters unchanged. That is, we compute the optical flow from the Hamburg Taxi sequence according to the model of Weickert and Schn\"orr \cite{WeiSchn01} only with periodic boundary conditions. In Fig.~\ref{fig:e2comparison1} we juxtapose the resulting vector field with  the optical flow computed on the deforming torus. A common measure for comparing two optical flow vector fields $u$ and $v$ is the angular error
\begin{equation*}
	\arccos \frac{\langle (1,u), (1,v) \rangle_{\mathbb{R}^3}}{|(1,u)|_{\mathbb{R}^3} |(1,v)|_{\mathbb{R}^3}}.
\end{equation*}
See \cite{BakSchaLewRotBla11} for example. The main purpose of adding the additional component $1$ to both vectors is to avoid division by zero. Extending the above definition in a straightforward way to vector fields in $\mathbb{R}^3$ we show in Fig.~\ref{fig:e2angerr} the angular error between the optical flow computed on the deforming and flat torus, respectively, both before and after pushforward to the deforming torus. Note that two unit vectors $u,v\in \mathbb{R}^2$ standing at an angle of $\pi/5$ would have an angular error of approximately $0.44$. Another common measure is the so-called endpoint error $|u-v|_{\mathbb{R}^3}$, which also takes into account the lengths of the vectors. However, since vector lengths are typically affected by the choice of regularization parameters and finding comparable values for the flat and deforming torus is not straightforward, we chose not to visualize the endpoint error.

\begin{figure}
	\includegraphics[width=.46 \textwidth]{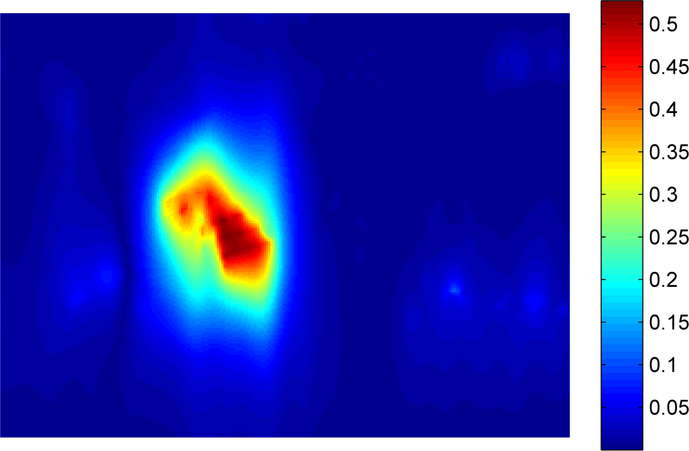}
	\hfill
	\includegraphics[width=.46\textwidth]{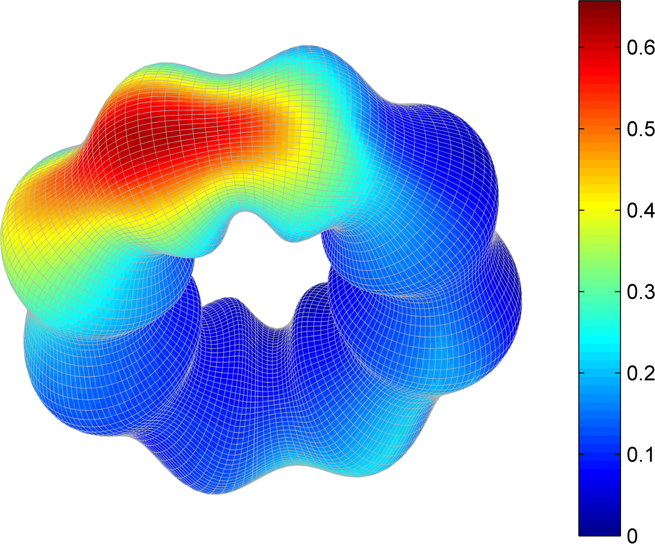}
	\caption{Angular errors at frame 19 between the optical flow vector fields computed on the flat and deforming torus, respectively. Left: $\mathbb{R}^2$ angular error between the pulled back vector fields. Right: $\mathbb{R}^3$ angular error between the pushed forward vector fields.}
	\label{fig:e2angerr}
\end{figure}

\begin{figure} 
 \includegraphics[width=.19 \textwidth]{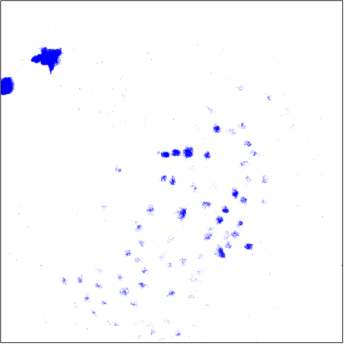}
 \includegraphics[width=.19 \textwidth]{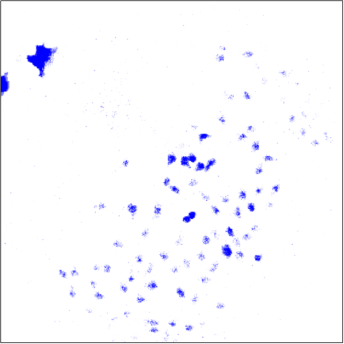}
 \includegraphics[width=.19 \textwidth]{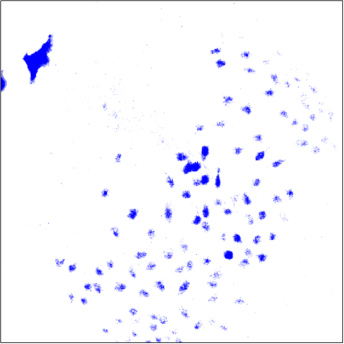}
 \includegraphics[width=.19 \textwidth]{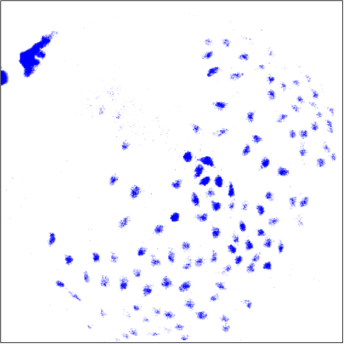}
 \includegraphics[width=.19 \textwidth]{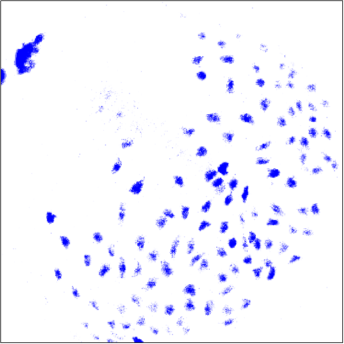}\vspace{.3cm}

 \includegraphics[width=.19 \textwidth]{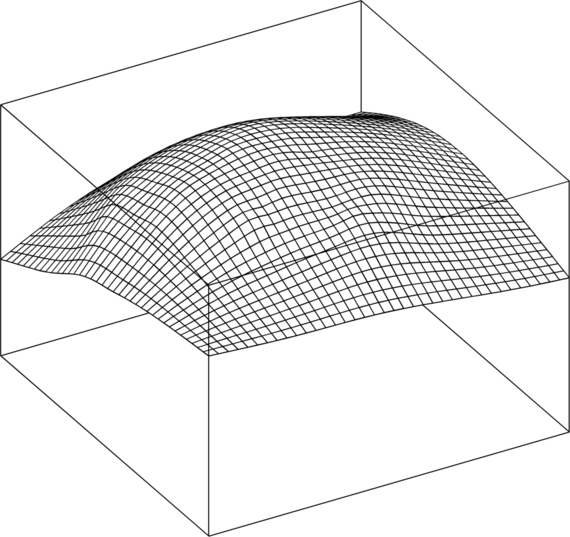}
 \includegraphics[width=.19 \textwidth]{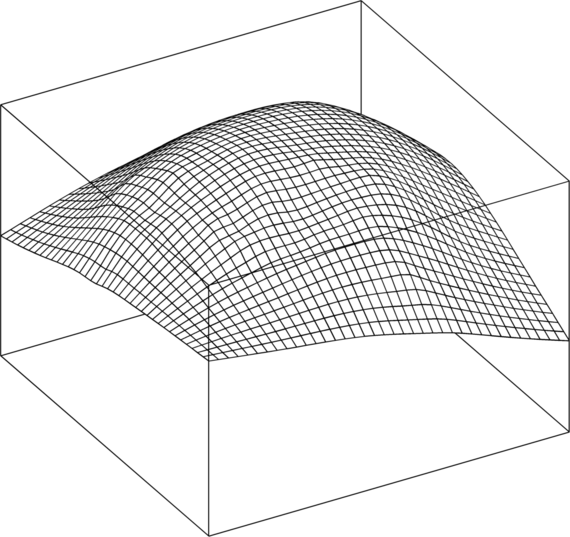}
 \includegraphics[width=.19 \textwidth]{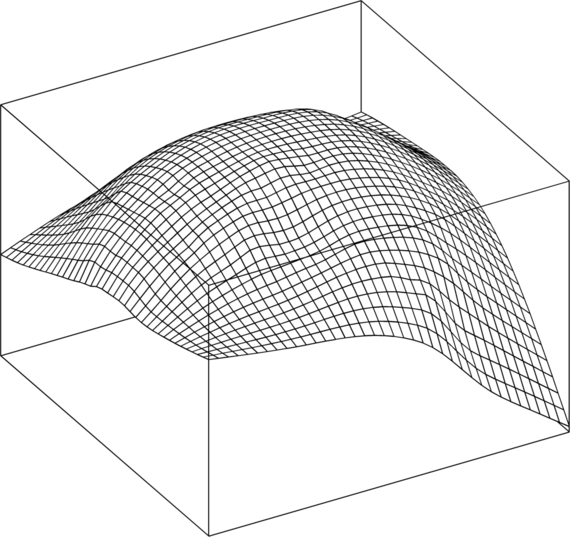}
 \includegraphics[width=.19 \textwidth]{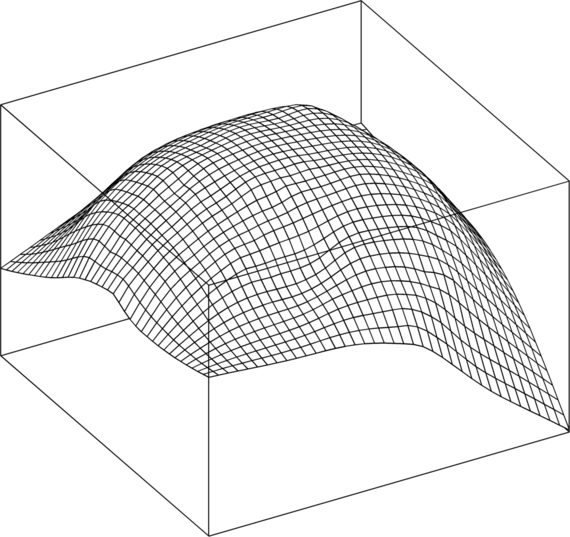}
 \includegraphics[width=.19 \textwidth]{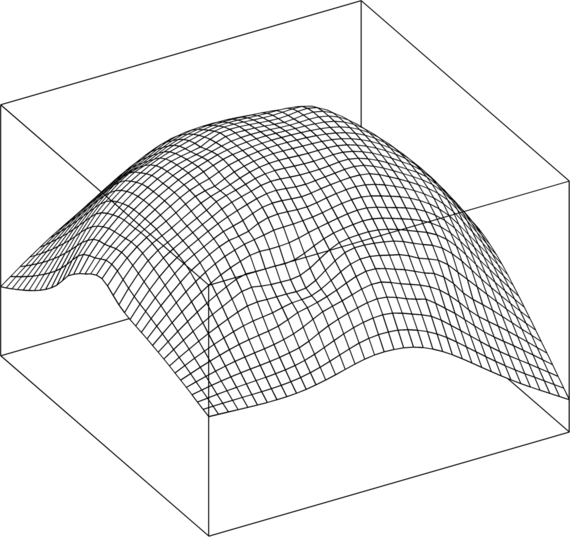}\vspace{.3cm}

 \includegraphics[width=.19 \textwidth]{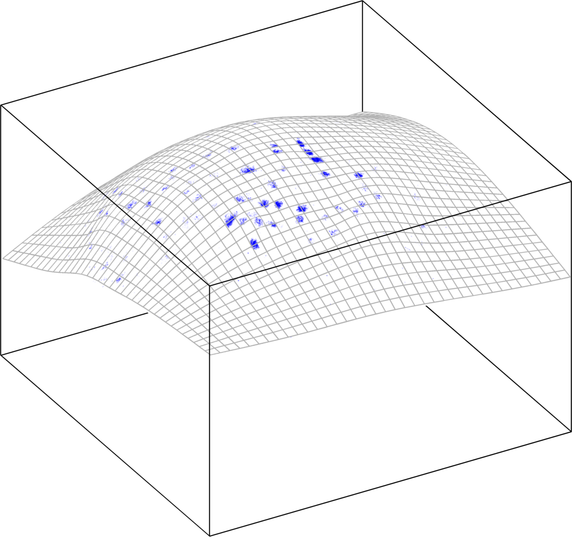}
 \includegraphics[width=.19 \textwidth]{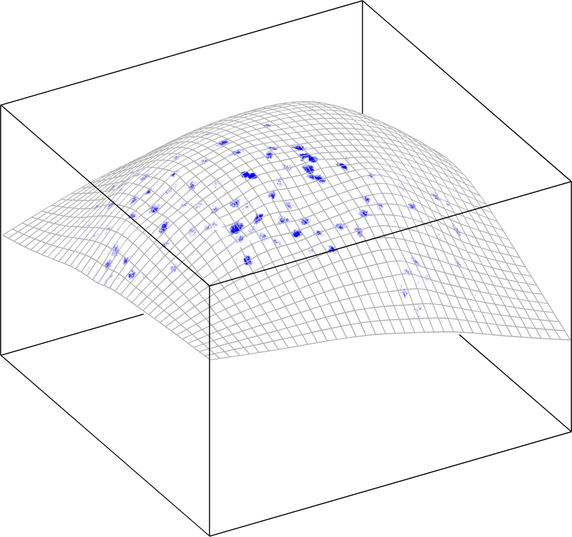}
 \includegraphics[width=.19 \textwidth]{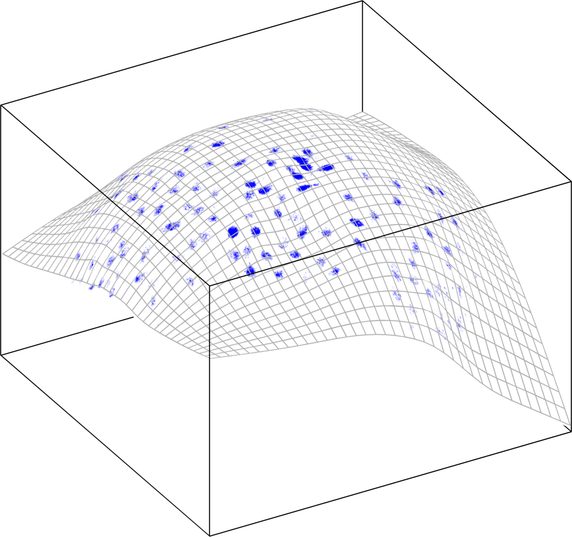}
 \includegraphics[width=.19 \textwidth]{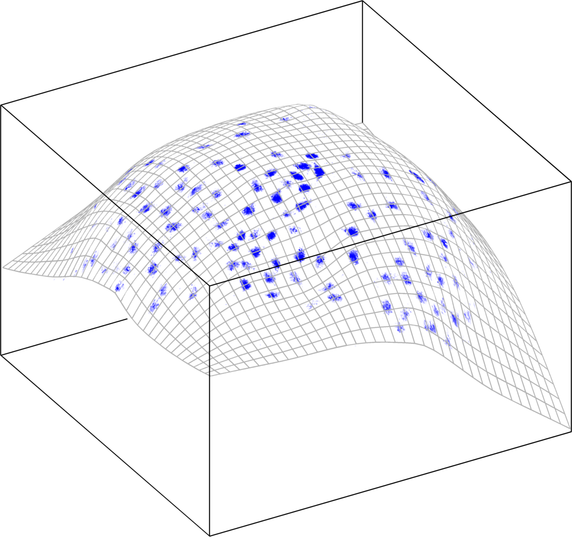}
 \includegraphics[width=.19 \textwidth]{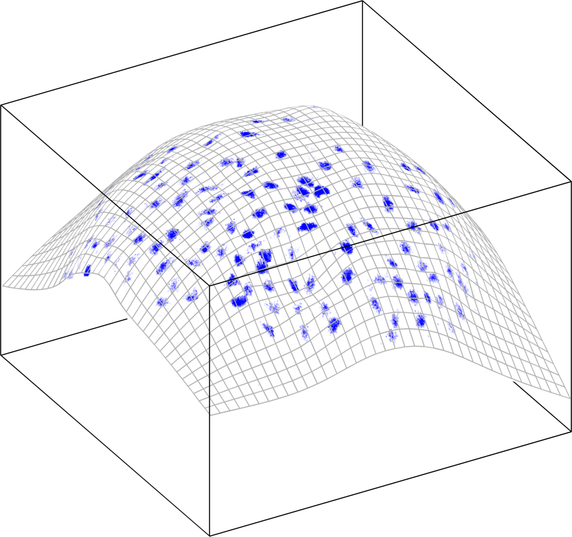}
 \caption{The data considered in Experiment II at frames 1, 6, 11, 16, and 20. Top row: the pulled back image sequence $I$. Middle row: the moving surface $\mathcal{M}_t$. Bottom row: the image sequence $\mathcal I$. }
\label{fig:e3data}
 \end{figure}
\begin{figure}
 \includegraphics[width=.24 \textwidth]{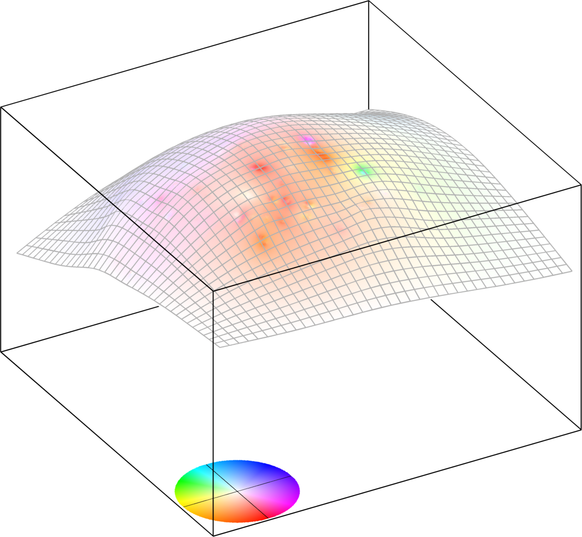}
 \includegraphics[width=.24 \textwidth]{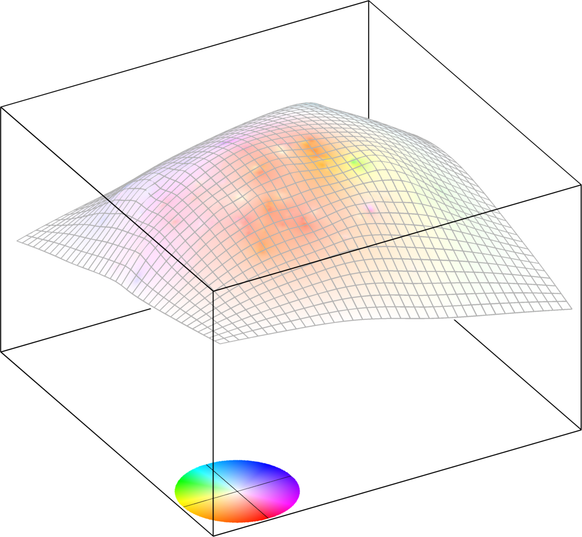}
 \includegraphics[width=.24 \textwidth]{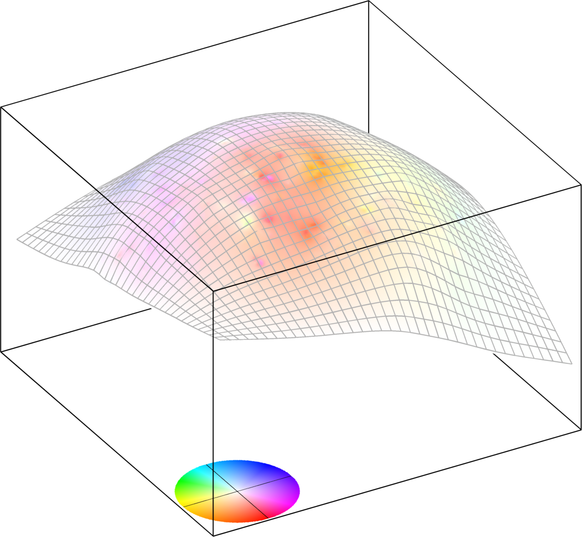}
 \includegraphics[width=.24 \textwidth]{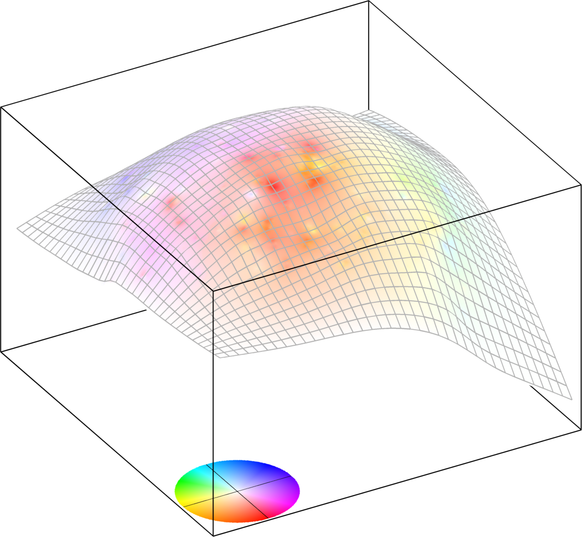} \\
 \includegraphics[width=.24 \textwidth]{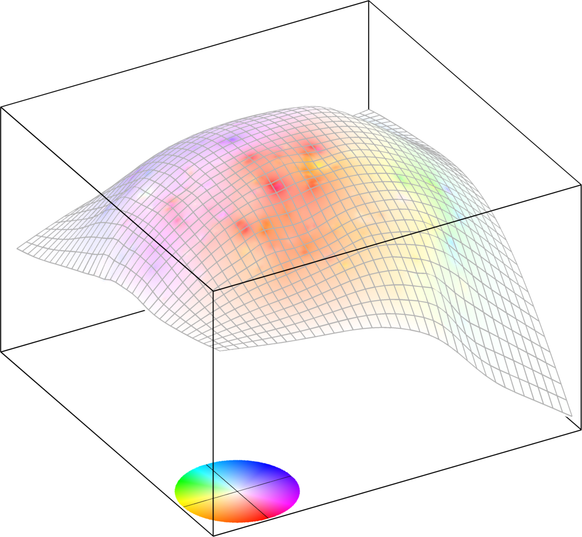}
\includegraphics[width=.24 \textwidth]{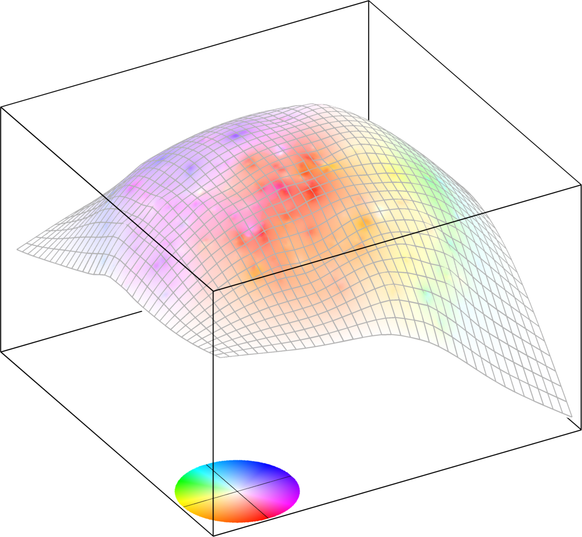}
\includegraphics[width=.24 \textwidth]{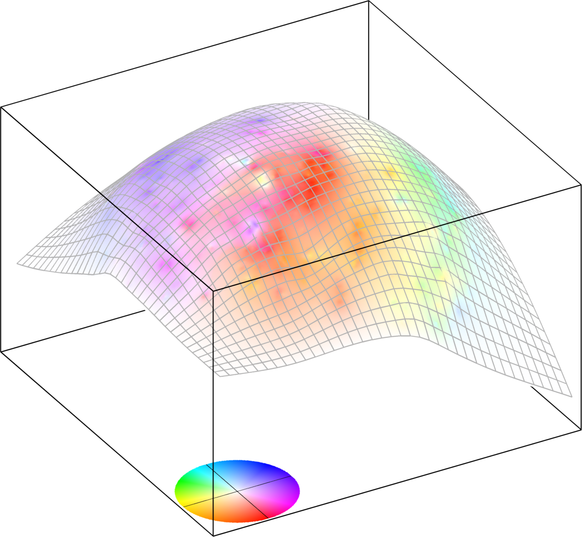}
\includegraphics[width=.24 \textwidth]{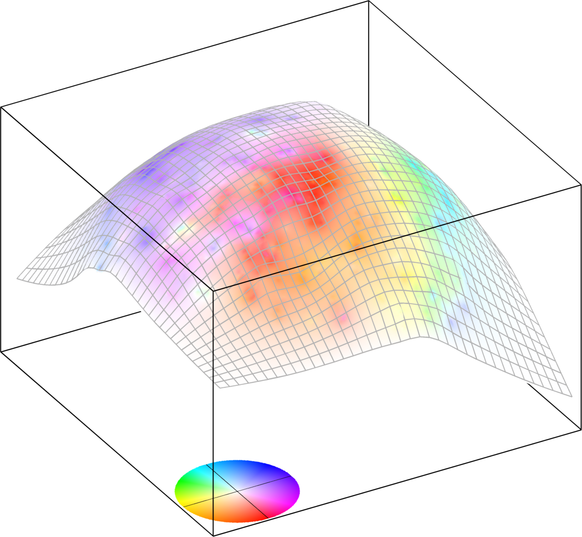}
 \caption{The color-coded optical flow vector field resulting from Experiment II at frames 1, 4, 7, 10, 11, 14, 17, 20.}
 \label{fig:e3colors}
\end{figure}

\begin{figure}
 \includegraphics[width=.24 \textwidth]{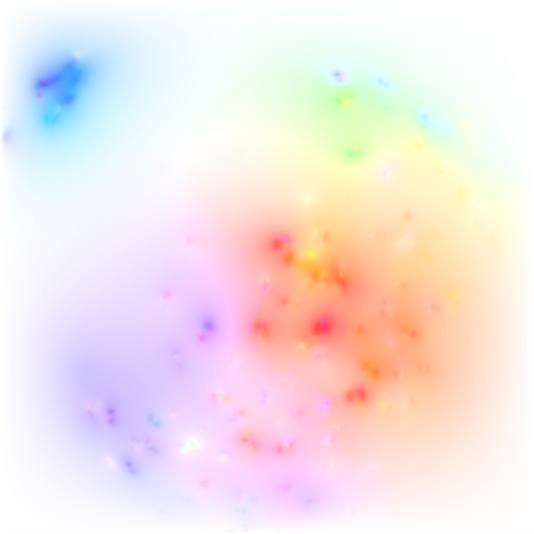}
 \includegraphics[width=.24 \textwidth]{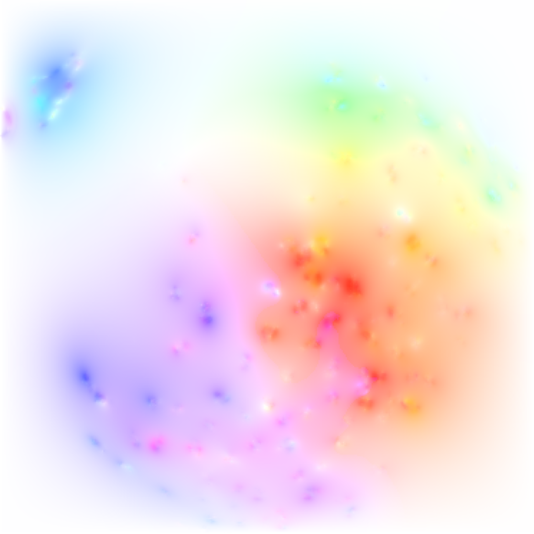}
 \includegraphics[width=.24 \textwidth]{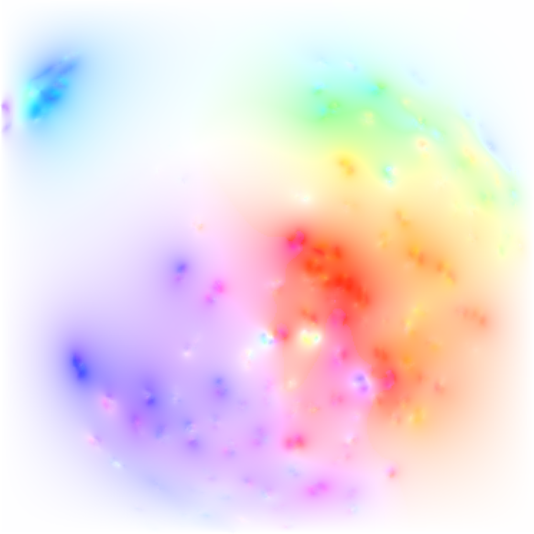}
 \includegraphics[width=.24 \textwidth]{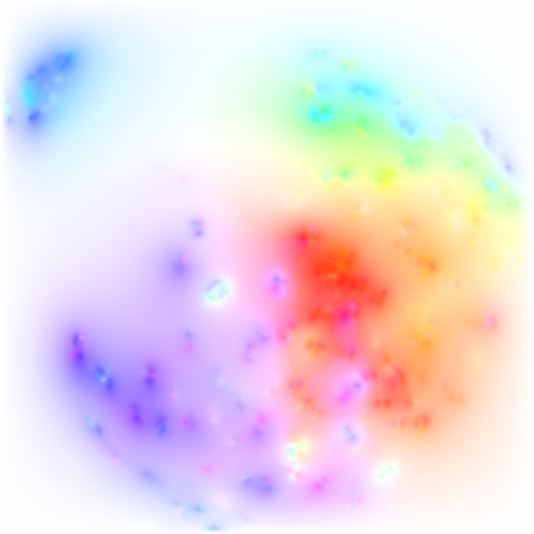}
\\
\includegraphics[width=.24 \textwidth]{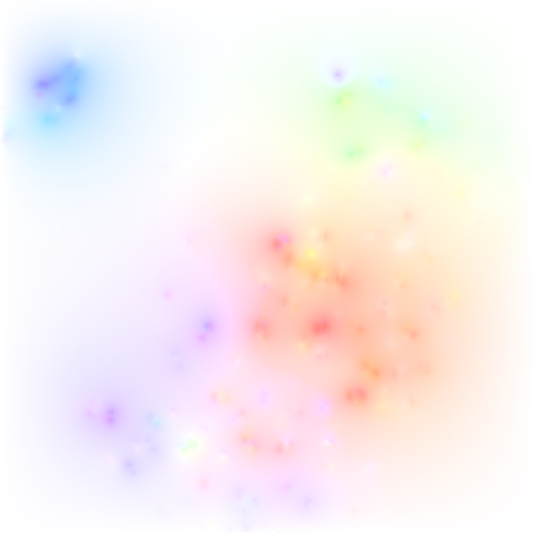}
\includegraphics[width=.24 \textwidth]{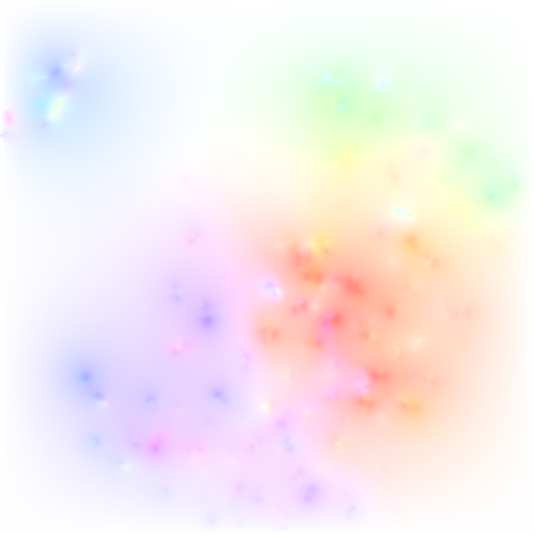}
\includegraphics[width=.24 \textwidth]{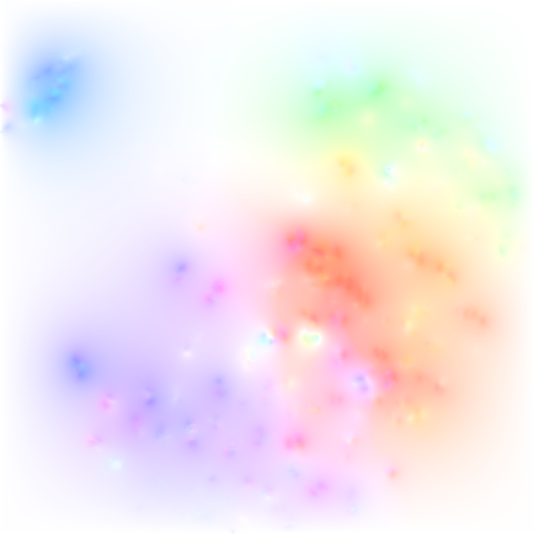}
\includegraphics[width=.24 \textwidth]{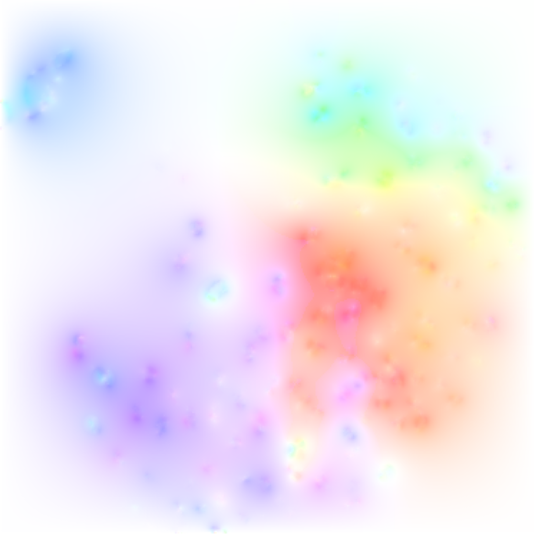}
\vspace{-0.2cm}
 \flushright{\includegraphics[width=.05 \textwidth]{figures/cw} }
 
 \caption{Comparison of the optical flow from Experiment II (frames 11 14, 17, 20) with the optical flow computed in the plane. First row: pull back optical flow from Fig.~\ref{fig:e3colors}, second row. Second row: optical flow computed in the plane. The color wheel is shown at the very bottom.}
\label{fig:e3comparison1}
\end{figure}

\paragraph{Experiment II, microscopy data.}
Finally, we test our model on real-world data. The image sequence under consideration in this section depicts a living zebrafish embryo during the gastrula period and has been recorded with a confocal laser-scanning microscope. The only visible feature in this dataset are the embryo's endodermal cells which, expressing a green fluorescent protein, proliferate on the surface of the embryo's yolk. Understanding and reconstructing cell motion during embryogenesis is a major topic in developmental biology and optical flow is one way to automatically extract this information \cite{AmaMyeKel13,AmaLemMosMcDWan14,KirLanSch14a,SchmShaScheWebThi13}. See \cite{KimBalKimUllSchi95} for a detailed account on the embryonic development of a zebrafish, and \cite{MegFra03} for more information on laser-scanning microscopy and fluorescent protein technology. 

The considered data do not depict the whole embryo but only a cuboid section of approximately $540\times 490\times 340 \,\mu m^3$. They have a spatial resolution of $512 \times 512\times 40$ voxels and the elapsed time between two consecutive frames is about four minutes. As in \cite{KirLanSch14a,SchmShaScheWebThi13} we avoid computational challenges by exploiting the fact that during gastrulation endodermal cells form a monolayer. This means they can be regarded as sitting on a two-dimensional surface. Therefore, by fitting a surface through the cells' positions, we can reduce the spatial dimension of the data by one. We refer to \cite{KirLanSch14a} on how this surface extraction was done.

In this particular experiment we apply our model to $21$ frames of the resulting 2D cell images with a resolution of $373 \times 373$ and again scaled to the unit interval. The extracted surface can be conveniently parametrized as the graph of a function $z(t,x_1,x_2)$ describing the height of the surface. That is, $f$ takes the form
\begin{equation*}
	f(t,x_1,x_2) = (x_1,x_2,z(t,x_1,x_2)).
\end{equation*}
In Fig.~\ref{fig:e3data} we show $I$, $\mathcal{M}_t$ and $\mathcal{I}$. The regularization parameters were set to $\alpha=10$, $\beta = 0$, $\gamma = 1$ and for the spatial boundaries we chose homogeneous Dirichlet boundary conditions. The GMRES solver converged faster this time and was terminated after the relative residual dropped below $10^{-3}$. Results are shown in Fig.~\ref{fig:e3colors}. In Fig.~\ref{fig:e3comparison1} we juxtapose the pulled back optical flow with the optical flow computed in the plane with the same parameters. Finally, we again compare the two vector fields by computing their angular error. This time we do so after push forward only, since for real-world data we are primarily interested in the vector field on the embedded surface.

\begin{figure}
	\begin{center}
	\includegraphics[width=.46 \textwidth]{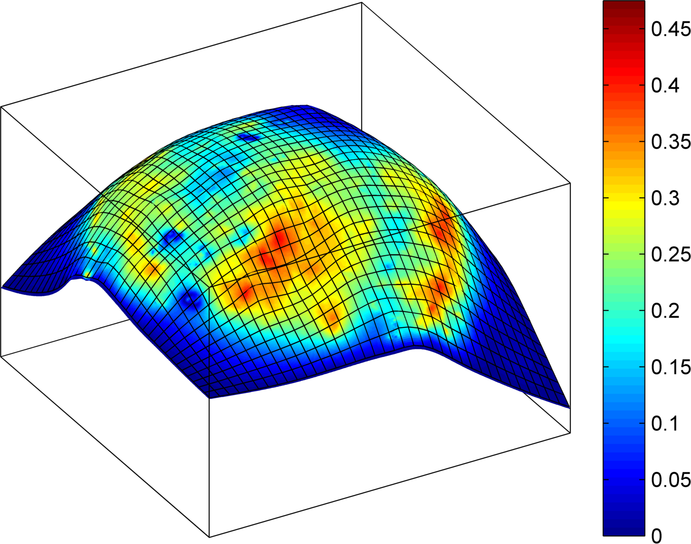}
	\caption{$\mathbb{R}^3$ angular errors at frame 20 between the optical flow computed on the zebrafish surface and the optical flow computed in the plane, but pushed forward to the same surface.}
	\label{fig:e3angerr}
	\end{center}
\end{figure}

\section{Conclusion}

Choosing a suitable regularization term is a major challenge in the
computation of the optical flow on a moving manifold when using
variational methods. The main question is how to incorporate the
structure of the manifold and of its movement into the regularization.
In this paper we have approached this problem from a purely differential
geometric point of view. We have constructed a Riemannian metric on the
time-dependent manifold in such a way that the paths of points on the
manifold are geodesics with respect to this metric. We have then used
a Horn--Schunck type quadratic regularization term with additional
time smoothing for the computation of the optical flow.
The experiments performed within this setting indicate the viability
of this approach and also show that using the manifold structure can
have a significant influence on the computed optical flow field.
Still, because of the usage of a quadratic regularization term
that is not adapted to the image structure, the resulting flow fields
tend to be oversmoothed.
The next step is therefore the extension to more complicated,
anisotropic regularization terms as discussed in~\cite{WeiSchn01a},
which may be more accurate for certain applications of optical flow.


\appendix

\section*{Appendix}
\section{Finding the parametrization}\label{ap:param}
So far we have assumed, that we are given the moving surface with a fixed parametrization. 
In applications this parametrization might be unknown, i.e., one might only observe the shape 
of the surface, but not its actual parametrization. Thus one will need to extract the parametrization from the observed data. In this part we will briefly sketch a possible approach to achieve this goal.
\begin{remark}
Note that the choice of parametrization will  have a tremendous influence on the resulting optical flow field. In particular one can  choose 
a parametrization, such that the optical flow field is almost zero. To achieve this one can take any fixed parametrization $f(t,\cdot)$ and solve the optical flow problem for this parametrization using small regularization parameters. 
Then one can use the resulting optical flow field $v$ to generate a path of diffeomorphisms $\varphi(t,\cdot)\in \operatorname{Diff}(M)$. Then the path $\tilde f(t,x)=f(t,\varphi(t,x))$ has the desired property.
\end{remark}
In the following we will assume that the evolution of the image has no influence on the evolution of the surface---the influence of the surface evolution on the image evolution is taken into account by the nature of the regularization term.
Furthermore we assume that we are given only the shape of the surface at each timepoint $t$, but not the actual parametrization, i.e., that we are given a path in the space of unparametrized, embedded surfaces; see \cite{Michor1980,Hamilton1982}
for a rigorous mathematical definiton of this infinite dimensional manifold.

At each time point $t$ we can now choose any parametrization of the surfaces yielding a path  of embeddings 
$$f: [0,T]\times M\mapsto \mathbb R^3.$$
Thus we have reduced the problem to finding the path of reparametrizations that best corresponds to the observed shape evolution. 

One way to tackle this problem is to define an
energy functional on the space of embeddings that incorporates the available information
on realistic shape evolutions.
In order to be independent of the initial parametrization of the path of surfaces we require 
that the energy functional is invariant under the action of the diffeomorphism group, i.e.,
$E(f(t,\varphi(x))=E(f(t,x))$, for all $\varphi \in \operatorname{Diff}(M)$. In this case, the energy functional on the space of parametrized surfaces induces 
an energy functional on the shape space of unparametrized surfaces.
Such a functional can be defined using a Riemannian metric, a Finsler type metric, or by some even more general Lagrangian, see 
e.g. \cite{Bauer2011b,Bauer2012d,Jermyn2011,Jermyn2012,Rumpf2011,Rumpf2012,Bauer2014}.

For the sake of simplicity, we will focus on the Riemannian case only, i.e.,
\begin{align*}
E(f)=\int_0^T G_f(f_t,f_t) dt, 
\end{align*}
where $G$ is some reparametrization invariant metric on the manifold of all embeddings. 

For historical reasons going back to Euler~\cite{Eul1765a}, these metrics are often represented via the corresponding inertia operator $L$:
$$G^L_f(f_t,f_t):=\int_M \langle f_t,L_f f_t\rangle \operatorname{vol}(g).$$
The simplest such metric is the reparametrization invariant $L^2$-metric -- or $H^0$--metric. This metric is induced by the operator $L=\operatorname{Id}$:
$$G_f^{0}(f_t,f_t)=\int_M \langle f_t,f_t\rangle \operatorname{vol}(g).$$
In order to guarantee that the bilinear form $G^L$ really induces a Riemannian metric, we require $L$ to be an elliptic pseudo-differential operator, that is symmetric and positive with respect to 
the $L^2$-metric. In addition we assume that $L$ is invariant under 
the action of the reparametrization group $\operatorname{Diff}(M)$. 
The invariance of $L$ implies that the induced metric $G^L$ is invariant 
under the action of $\operatorname{Diff}(M)$ as required. 
Using the operator $L$, one can include physical or biological model-parameters in the definition of the metric.

%

Now we want to find the optimal reparametrization of the initial path $f$ with respect to this energy functional. Therefore we have to solve the optimization problem:
$$\psi(t,x)=\underset{\varphi \in  C^{\infty}([0,T],\operatorname{Diff}(M))}{\operatorname{argmin}} E(f(t,\varphi(t,x)).
$$
Further expanding the energy functional using the invariance of the Riemannian metric yields:
\begin{align*}
E(f(t,\varphi(t,x))= \int_0^T G_f(f_t,f_t) &+ G_f(f_t,Tf(\varphi_t\circ\varphi^{-1})) \\&+ G_f(Tf(\varphi_t\circ\varphi^{-1}),Tf(\varphi_t\circ\varphi^{-1})) dt.
\end{align*}
\begin{remark}
 As an example we want to consider this functional for the $L^2$-metric.
Therefore we  decompose $f_t$ for each time point $t$ into a part that is normal to the surface $f$ and a part that is tangential:
$$f_t=Tf.f_t^\top+f_t^\bot.$$
Since these parts are orthogonal to each other -- w.r.t. the $L^2$--metric --
the energy functional reads as 
\begin{align*}
E(f(t,\varphi(t,x))= \int_M \langle f_t^{\bot},f_t^{\bot} \rangle \operatorname{vol}(g)+\int_M g(f_t^{\top}+\varphi_t\circ\varphi^{-1},f_t^{\top}+\varphi_t\circ\varphi^{-1}) \operatorname{vol}(g)
\end{align*}
This functional is minimal for
$$\varphi_t\circ\varphi^{-1}=-f_t^{\top}.$$
This however corresponds to a reparametrization $\varphi$ such that $\tilde f=f\circ\varphi$ consists only of a deformation in normal direction.
\end{remark}
\begin{remark}
 For a more general metric $G^L$ this will not hold anymore, since normal and tangential vector fields might not be orthogonal with respect to the $G^L$--metric.
Instead one can show that for the optimal path $\tilde f$ we will have that $L \tilde f_t$ is normal, cf.~\cite{Bauer2011b}.
\end{remark}

\section{Proof of Theorem~\ref{th:opt_coord}}

In the following we give a sketch of the derivation of the formulas
in Theorem~\ref{th:opt_coord}.

\begin{lemma}
The Christoffel symbols of the metric $\bar g$ have the form
given in ~\eqref{eq:Christoffel}.
\end{lemma}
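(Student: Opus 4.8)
The plan is to evaluate the standard Koszul formula
\[
\bar\Gamma^j_{ik} = \tfrac12 \bar g^{jm}\bigl(\bar g_{mi,k} + \bar g_{mk,i} - \bar g_{ik,m}\bigr)
\]
directly, exploiting the block structure of $\bar g$. Writing the time index as $0$ and spatial indices as Latin letters in $\{1,2\}$, the coefficients are $\bar g_{00} = \alpha^2$, $\bar g_{0i} = \bar g_{i0} = 0$ and $\bar g_{ij} = g_{ij}$, with inverse $\bar g^{00} = \alpha^{-2}$, $\bar g^{0i} = 0$ and $\bar g^{ij} = g^{ij}$. First I would record which coordinate derivatives of the coefficients survive: since $\alpha$ is a constant, $\bar g_{00,\mu} = 0$ for every $\mu$; the off-diagonal block vanishes identically, so $\bar g_{0i,\mu} = 0$; and the only nonzero derivatives are the spatial ones $\bar g_{ij,0} = \partial_t g_{ij}$ and $\bar g_{ij,k} = \partial_k g_{ij}$. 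These relations, together with the fact that the contraction index $m$ in $\bar g^{jm}$ is forced to be of the same type (temporal or spatial) as $j$ because $\bar g$ is block diagonal, are exactly what make the six cases of \eqref{eq:Christoffel} collapse.

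Then I would run through the cases according to how many of $i,j,k$ equal $0$. When the upper index is $0$, only the $\bar g^{00}$ term contributes; the two bracket terms carrying a $0$ on a lower metric index die, so the bracket reduces to $-\bar g_{ik,0}$, which produces the entry $-\partial_t g_{ik}/(2\alpha^2)$ when $i,k$ are spatial and $0$ when $i$ or $k$ is $0$. When the upper index $j$ is spatial, only $\bar g^{jm}=g^{jm}$ survives in the contraction: if $i=k=0$ every bracket term vanishes, giving $\bar\Gamma^j_{00}=0$; if exactly one lower index is $0$ the bracket reduces to a single $\partial_t g$ term, yielding the mixed symbols $\bar\Gamma^j_{0k}$ and $\bar\Gamma^j_{i0}$. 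This disposes of all cases except the purely spatial one.

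For the purely spatial symbols I would observe that at each fixed $t$ the restriction of $\bar g$ to $M$ is exactly $g$, so $\bar\Gamma^j_{ik}$ for $i,j,k\neq 0$ coincides with the intrinsic Christoffel symbol of $g$. To bring it into the embedded form I would substitute $g_{ij} = \langle \partial_i f,\partial_j f\rangle_{\R^3}$ and expand each spatial derivative by the product rule as $\partial_\ell g_{ij} = \langle \partial_{i\ell}f,\partial_j f\rangle_{\R^3} + \langle \partial_i f,\partial_{j\ell}f\rangle_{\R^3}$. In the combination $\bar g_{mi,k}+\bar g_{mk,i}-\bar g_{ik,m}$ the terms in which the second derivative carries the contraction index $m$ cancel in pairs, using the symmetry $\partial_{ab}f=\partial_{ba}f$, leaving only the terms whose second derivative is $\partial_{ik}f$; contracting with $g^{j\ell}$ then recovers the embedded expression in $\langle \partial_{ik}f,\partial_\ell f\rangle_{\R^3}$ stated in \eqref{eq:Christoffel}.

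The computation carries no real conceptual obstacle; the work is entirely bookkeeping. The step demanding the most care is the spatial case, where one must track the product-rule expansion and verify precisely which second-derivative terms cancel so that the result depends only on $\partial_{ik}f$. A secondary point to keep in mind throughout is that $\alpha$ is constant, so that all of its derivatives — including $\partial_t$ — vanish and the temporal block never contributes a derivative term; this is what forces all three ``$j=0$ with $i$ or $k=0$'' entries, as well as $\bar\Gamma^j_{00}$, to be zero.
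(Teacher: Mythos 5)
Your plan is the same as the paper's own proof, which consists of exactly this: evaluate $\bar\Gamma^j_{ik} = \tfrac12 \bar g^{jm}\bigl(\bar g_{mi,k} + \bar g_{mk,i} - \bar g_{ik,m}\bigr)$ directly, using the block-diagonal form of $\bar g$ and $g_{ij} = \langle\partial_i f,\partial_j f\rangle_{\R^3}$. Your structural analysis is correct throughout: which coefficient derivatives survive, how the block structure forces the contraction index to have the same type as the upper index, which cases vanish, which reduce to a single $\partial_t g$ term, and which product-rule terms cancel in the purely spatial case.

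The gap is in the constants --- precisely the part you dismiss as ``entirely bookkeeping.'' Carrying your own plan through faithfully: in the mixed case $i=0$, $j,k\neq 0$ the bracket reduces to the single term $\partial_t g_{mk}$, so the prefactor $\tfrac12$ survives and you obtain $\bar\Gamma^j_{0k} = \tfrac12 g^{j\ell}\partial_t g_{k\ell}$, not the printed $g^{j\ell}\partial_t g_{k\ell}$. In the purely spatial case your cancellation leaves \emph{two} equal terms, i.e.\ the bracket equals $2\langle\partial_{ik}f,\partial_\ell f\rangle_{\R^3}$, and this $2$ cancels against the prefactor $\tfrac12$, giving $\bar\Gamma^j_{ik} = g^{j\ell}\langle\partial_{ik}f,\partial_\ell f\rangle_{\R^3}$ rather than the printed $2g^{j\ell}\langle\partial_{ik}f,\partial_\ell f\rangle_{\R^3}$; the latter would also contradict the classical Gauss relation $\langle\partial_{ik}f,\partial_\ell f\rangle_{\R^3} = \Gamma^m_{ik}g_{m\ell}$. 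So the computation you outline does \emph{not} recover \eqref{eq:Christoffel} as printed: the mixed and purely spatial cases there carry a spurious factor of $2$, while the case $j=0$, $i,k\neq 0$ (the one place you do state the constant, $-\partial_t g_{ik}/(2\alpha^2)$) is consistent with the standard normalization. Since no single convention reconciles all six printed cases, the printed equation is internally inconsistent --- a typo in the paper --- but a proof that claims to ``recover the expression stated in \eqref{eq:Christoffel}'' is asserting an identity that is false as written. To close the argument you must state the computed values explicitly and flag the needed correction (or, equivalently, verify against \eqref{eq:Christoffel} with the factors $\tfrac12$ restored in the last three cases).
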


\begin{proof}
This is a straight forward computation using the definition
of the Christoffel symbols as
\[
\bar \Gamma_{kl}^i=\frac12 \bar g^{im}\big(\bar g_{mk,l}+\bar g_{ml,k}-\bar g_{kl,m}\big)
\]
and the fact that the metric $\bar{g}$ and its inverse have the forms
\[
\bar{g} = 
\begin{pmatrix}
  \alpha^2 & 0 & 0 \\
  0 & g_{11} & g_{12} \\
  0 & g_{12} & g_{22}
\end{pmatrix}
\qquad\text{ and }\qquad
\bar g^{-1}=
\begin{pmatrix}
  \alpha ^{-2} &  0 &0\\
  0 & g^{11}&g^{12}\\
  0&g^{12}&g^{22}
\end{pmatrix},
\]
respectively, and
\[
g_{ij} = \langle \partial_i f, \partial_j f\rangle_{\R^3}.
\]
\end{proof}

\begin{lemma}
  The symbols $\bar{\omega}_{ik}^j$ have the form given in~\eqref{eq:connection}.
\end{lemma}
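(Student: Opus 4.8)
The plan is to compute the connection coefficients $\bar{\omega}^j_{ik}$ from their defining relation $\bar{\omega}^m_{ik}\bar{X}_m = \bar{\nabla}_{\bar{X}_i}\bar{X}_k$ by translating it into the coordinate frame $(\partial_t,\partial_1,\partial_2)$, where the Christoffel symbols $\bar{\Gamma}^j_{ik}$ are already known from the previous lemma. Concretely, I would start from the general formula already recorded in the excerpt,
\[
\bar{\omega}^j_{ik} = \bigl( \bar a^\ell_i \partial_\ell \bar a^m_k + \bar a^\ell_i \bar a^n_k \bar\Gamma^m_{\ell n}\bigr)\bar a^h_j \bar g_{mh},
\]
and simply substitute the explicit block structure of the transition matrix $\bar A$ and the metric $\bar g$. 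Recall that $\bar A$ is block diagonal with the scalar $\tfrac1\alpha$ in the time slot and the $2\times 2$ spatial block $(a^m_k)$, so $\bar a^0_0 = \tfrac1\alpha$, $\bar a^0_i = \bar a^j_0 = 0$ for $i,j\neq 0$, and the spatial entries are the $a$'s; likewise $\bar g_{00}=\alpha^2$, $\bar g_{0i}=0$, and $\bar g_{ik}=g_{ik}$ for $i,k\neq 0$.

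The bulk of the work is then a case distinction according to which of the indices $i,j,k$ equal $0$. I would organize it along the six cases appearing in~\eqref{eq:connection}. The two vanishing cases ($j=0$ with $i=0$ or $k=0$; and $j\neq 0$ with $i=k=0$) follow immediately because $\bar{X}_0 = \tfrac1\alpha\partial_t$ has constant (in fact covariantly trivial along itself) behaviour and because the off-diagonal metric and transition entries kill the relevant sums: the crucial structural fact is that $\bar g_{0m}=0$ for $m\neq 0$ so that a term with $j=0$ only survives through $\bar g_{00}$, and similarly $\bar a^\ell_i$ forces the summation indices into the right block. For the nonvanishing mixed cases ($i=0$ with $j,k\neq 0$, and symmetrically $k=0$ with $i,j\neq 0$), the derivative $\partial_\ell$ contracted with $\bar a^\ell_i$ becomes $\tfrac1\alpha\partial_t$, which is exactly where the $\tfrac1\alpha$ and the time derivatives $\partial_t a^m_k$ of the spatial frame enter the formula. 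The purely spatial case $i,j,k\neq 0$ reduces verbatim to the connection coefficients of the time-dependent surface metric $g$, matching the expression with the spatial Christoffel symbols $\Gamma^m_{\ell n}$.

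The main obstacle I anticipate is not conceptual but bookkeeping: one must carefully track which summed indices are allowed to range over $\{0,1,2\}$ versus only $\{1,2\}$, since the block-diagonal structure of $\bar A$ and $\bar g$ silently truncates the Einstein sums, and a single misattributed index would spoil the $\tfrac1\alpha$ factors or introduce spurious cross terms. In particular, in the mixed cases one has to be attentive that $\partial_\ell \bar a^m_k$ with $\ell$ forced to $0$ means a genuine $\partial_t$ of the \emph{spatial} frame vectors $a^m_k(t,\cdot)$, which are time dependent precisely because $g$ is; conflating this with a spatial derivative is the easy error to make. I would therefore verify each case by also checking it against the defining relation $\bar{\nabla}_{\bar{X}_i}\bar{X}_k = \bar{\omega}^m_{ik}\bar{X}_m$ directly, using $\bar g(\bar{\nabla}_{\bar X_i}\bar X_k, \bar X_j)=\bar\omega^j_{ik}$ (valid since the $\bar X_j$ are orthonormal), which gives an independent route to pin down each coefficient and guard against index-range mistakes.
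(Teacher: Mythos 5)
Your proposal is correct and follows essentially the same route as the paper: the paper's proof likewise starts from the formula $\bar \omega^j_{ik} = \bigl( \bar a^\ell_i \partial_\ell \bar a^m_k + \bar a^\ell_i \bar a^n_k \bar\Gamma^m_{\ell n} \bigr) \bar a^h_j \bar g_{mh}$, records the block-diagonal form of the transition matrix $\bar a^\ell_i$ (namely $\alpha^{-1}$ for $i=\ell=0$, zero for mixed time/space indices, and $a_i^\ell$ for spatial indices), and then invokes the Christoffel symbols~\eqref{eq:Christoffel} to reduce everything to a case-by-case substitution. Your additional cross-check via $\bar g(\bar\nabla_{\bar X_i}\bar X_k,\bar X_j)=\bar\omega^j_{ik}$ is a sensible safeguard but not part of the paper's (very terse) argument.
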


\begin{proof}
  The connection coefficients $\bar{\omega}_{ik}^j$ are defined as
  \[
  \bar \omega^j_{ik} = \left( \bar a^\ell_i \partial_\ell \bar a^m_k 
    + \bar a^\ell_i \bar a^n_k \bar\Gamma^m_{\ell n} \right) \bar a^h_j \bar g_{mh}\,.
  \]
  Moreover, the coordinates $\bar{a}_i^\ell$ have the form
  \[
  \bar{a}_i^\ell = 
  \begin{cases}
    \alpha^{-1} & \text{ if } i = \ell = 0,\\
    0 & \text{ if } i = 0 \text{ and } \ell \neq 0, \text{ or } i\neq 0 \text{ and } \ell = 0,\\
    a_i^\ell & \text{ if } i, \ell \neq 0.
  \end{cases}
  \]
  Using these facts and the form of the Christoffel symbols derived
  in~\eqref{eq:Christoffel}, the result follows from a straight forward calculation.
\end{proof}

\begin{lemma}\label{le:sim_grad}
  The $L^2$-gradient of the similarity term $\mathcal{S}$ in the energy 
  functional $\mathcal{E}$ can be written for $\bar{u} = (0,u^j X_j)$ as
  \[
  \operatorname{grad} \mathcal{S}(\bar{u})
  = 2\bigl(\partial_t I + \partial_\ell I a_m^\ell u^m\bigr)\partial_k I g^{ik} b_k^j X_j.
  \]
\end{lemma}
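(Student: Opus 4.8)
The plan is to start from the coordinate-free expression for the $L^2$-gradient of the similarity term that was already derived in Lemma~\ref{th:gradient}, namely
\[
\operatorname{grad}^{L^2}(\mathcal{S}(\bar u)) = 2\Big(\partial_t I + g(\nabla^g I, u)\Big)(0, \nabla^g I),
\]
and to rewrite both the scalar prefactor and the vector $(0,\nabla^g I)$ in terms of the orthonormal frame $\{\bar X_i\}$ and the change-of-basis matrix $\bar A = (a_i^\ell)$ together with its inverse $(b_k^j)$. Since $\bar u = (0,u)$ has vanishing time component and the vector $(0,\nabla^g I)$ is likewise purely spatial, everything stays inside the spatial factor of $T\bar M$, and the computation reduces to a frame change on $M$.

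First I would handle the scalar prefactor. Because the Riemannian gradient is characterized by $g(\nabla^g I, Y) = dI(Y) = Y(I)$ for every tangent vector $Y$, the quantity $g(\nabla^g I, u)$ is just the directional derivative $u(I)$. Writing the spatial vector field as $u = u^m X_m$ and using $X_m = a_m^\ell \partial_\ell$, I obtain $u(I) = u^m a_m^\ell \partial_\ell I = \partial_\ell I\, a_m^\ell u^m$, so the prefactor becomes $\partial_t I + \partial_\ell I\, a_m^\ell u^m$, exactly as claimed.

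Next I would express the vector $\nabla^g I$ in the orthonormal frame. In the natural coordinate frame the sharp of $dI$ reads $\nabla^g I = g^{ik}\,\partial_i I\,\partial_k$, where $g^{ik}$ are the coefficients of the inverse spatial metric. Substituting the inverse change of basis $\partial_k = b_k^j X_j$, where $b_k^j$ is the $X_j$-component of $\partial_k$, gives $\nabla^g I = \partial_i I\, g^{ik} b_k^j X_j$. Inserting both of these computations into the coordinate-free formula and retaining the factor of $2$ then yields the stated expression for $\operatorname{grad}\mathcal{S}(\bar u)$.

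The derivation is essentially bookkeeping, so the only thing to be careful about is keeping the two frames apart: the natural frame $(\partial_t,\partial_1,\partial_2)$, in which the coefficients $g^{ik}$ and the partial derivatives $\partial_\ell I$ naturally live, and the orthonormal frame $(\bar X_0,\bar X_1,\bar X_2)$, in which the final answer is written. The main (minor) obstacle is therefore consistency of the index contractions---using $a_m^\ell$ when pushing the directional derivative $u(I)$ into coordinates and $b_k^j$ when pulling the gradient vector back into the orthonormal frame---and confirming that no spurious time-component terms appear, which is guaranteed by the fact that $\bar u$, and hence $(0,\nabla^g I)$, has no $\partial_t$ part.
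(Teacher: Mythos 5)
Your proposal is correct and follows essentially the same route as the paper: both start from the coordinate-free gradient formula of Lemma~\ref{th:gradient}, rewrite the scalar factor $g(\nabla^g I,u)$ as the directional derivative expressed through the frame change $X_m = a_m^\ell\partial_\ell$, and convert $\nabla^g I = \partial_i I\, g^{ik}\partial_k$ into the orthonormal frame via $\partial_k = b_k^j X_j$. The only cosmetic difference is that the paper introduces intermediate coordinate components $\tilde u^\ell = a_m^\ell u^m$ of $u$ rather than expanding $u^m X_m$ directly, and your index bookkeeping in the gradient term is in fact the more consistent of the two.
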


\begin{proof}
  As shown in Theorem~\ref{th:gradient}, the gradient of $\mathcal{S}$ has the form
  \[
  \operatorname{grad}\mathcal{S}(\bar{u}) 
  = 2\bigl(\partial_t I + g(\nabla^g I,u)\bigr)(0,\nabla^g I)\,.
  \]
  Denote now by $\tilde{u}^j$ the coordinates of $u$ with respect
  to $\partial_j$, that is, $u = \tilde{u}^j \partial_j$. Then
  \[
  g(\nabla^g I,\bar{u}) = (D_x I)\bar{u} = (\partial_\ell I)\tilde{u}^\ell.
  \]
  Moreover we have
  \[
  \tilde{u}^\ell = a_m^\ell u^m.
  \]
  Moreover, the coordinate expression of $\nabla^g I$ is $(\partial_k I)g^{ik}\partial_i$.
  Therefore we obtain
  \[
  \operatorname{grad}\mathcal{S}(\bar{u}) 
  = 2\bigl(\partial_t I + \partial_\ell I a_m^\ell u^m\bigr)\partial_k I g^{ik}\partial_i.
  \]
  Since $\partial_i = b_k^j X_j$, we obtained the claimed representation.
\end{proof}

\begin{lemma}\label{le:BL1}
  In the local coordinate frame $\bar X_0=\frac1{\alpha}\partial_t,\bar X_1,\bar X_2$  the Bochner Laplacian on the Riemannian manifold $(\bar M,\bar g)$ 
  of a vector field $\bar u$ satisfying  Neumann boundary conditions
  $$\bar \nabla_{\nu}\bar u\big|_{\partial \bar M}= \bar \nabla_{\partial_t}\bar u(\cdot,x)\big|_{0}^T=0$$
  is given by
  \begin{equation}\label{eq:BL}
    \Delta^B \bar u = \bar \nabla^* \bar \nabla \bar u
    =-\sum_{i=0}^2 \bar \nabla^2_{\bar X_i,\bar X_i} \bar u
    -\frac{\operatorname{Tr}(g^{-1}\partial_t g)}{2\alpha}\bar \nabla_{\bar X_0}\bar u\,.
  \end{equation}
\end{lemma}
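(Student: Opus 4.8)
The plan is to evaluate $\bar\nabla^*\bar\nabla$ in the orthonormal frame $\{\bar X_0,\bar X_1,\bar X_2\}$ and to isolate the single first-order term that a time-dependent, non-holonomic frame produces. Since the frame is $\bar g$-orthonormal, the Bochner Laplacian is minus the metric trace of the second covariant derivative,
\[
\bar\nabla^*\bar\nabla\bar u=-\operatorname{tr}_{\bar g}\bar\nabla^2\bar u=-\sum_{i=0}^2\bar\nabla^2_{\bar X_i,\bar X_i}\bar u,\qquad \bar\nabla^2_{X,Y}:=\bar\nabla_X\bar\nabla_Y-\bar\nabla_{\bar\nabla_X Y}.
\]
To keep the role of the boundary conditions visible, I would derive this by pairing with a test field $\bar v$, expanding $\bar g^1_1(\bar\nabla\bar u,\bar\nabla\bar v)=\sum_i\bar g(\bar\nabla_{\bar X_i}\bar u,\bar\nabla_{\bar X_i}\bar v)$, and integrating by parts via the divergence theorem on $\bar M$. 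The surviving boundary integrals over $[0,T]\times\partial M$ and $\{0,T\}\times M$ are exactly those appearing in the proof of Lemma~\ref{th:gradient} and are proportional to $\bar\nabla_\nu\bar u$; the Neumann hypothesis $\bar\nabla_\nu\bar u|_{\partial\bar M}=0$ (with $\nu=\partial_t$ on the time faces) annihilates them.

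The new first-order term arises from the geodesic-correction part $\sum_i\bar\nabla_{\bar\nabla_{\bar X_i}\bar X_i}\bar u$ of this expression, so I would read off the acceleration vectors $\bar\nabla_{\bar X_i}\bar X_i=\bar\omega^j_{ii}\bar X_j$ from the connection coefficients in~\eqref{eq:connection}. For $i=0$ all $\bar\omega^j_{00}$ vanish, so $\bar\nabla_{\bar X_0}\bar X_0=0$; this reflects that the vertical curves $t\mapsto(t,x)$ are $\bar g$-geodesics and means there is no correction from the time direction. For $i=1,2$ the purely spatial components $\bar\omega^{j}_{ii}$, $j\in\{1,2\}$, reproduce the intrinsic acceleration $\nabla^g_{X_i}X_i$; in the notation of~\eqref{eq:BL} these stay absorbed inside the terms $\bar\nabla^2_{\bar X_i,\bar X_i}\bar u$, so the only piece displayed separately is the time component $\bar\omega^0_{ii}\bar X_0$.

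It remains to sum these time components. By~\eqref{eq:connection} one has $\bar\omega^0_{ii}=\alpha\,a_i^\ell a_i^n\bar\Gamma^0_{\ell n}$, and by~\eqref{eq:Christoffel} $\bar\Gamma^0_{\ell n}=-\partial_t g_{\ell n}/(2\alpha^2)$ for $\ell,n\neq0$. Because $\{X_1,X_2\}$ is $g$-orthonormal, the change-of-basis coefficients satisfy $\sum_i a_i^\ell a_i^n=g^{\ell n}$, whence
\[
\sum_{i=1}^2\bar\omega^0_{ii}=-\frac{1}{2\alpha}\,g^{\ell n}\partial_t g_{\ell n}=-\frac{\operatorname{Tr}(g^{-1}\partial_t g)}{2\alpha}.
\]
Inserting this into $+\sum_i\bar\nabla_{\bar\nabla_{\bar X_i}\bar X_i}\bar u$ yields exactly the extra summand $-\frac{\operatorname{Tr}(g^{-1}\partial_t g)}{2\alpha}\bar\nabla_{\bar X_0}\bar u$ of~\eqref{eq:BL}.

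I expect the main obstacle to be bookkeeping rather than anything deep: one must be careful that the frame is neither holonomic nor time-independent, so that the spatial acceleration is retained inside $\bar\nabla^2_{\bar X_i,\bar X_i}$ while only the $\bar X_0$-component is separated out, and one must track the Neumann boundary terms correctly through the integration by parts. A convenient independent check of the coefficient is the divergence identity $\divg_{\bar g}(\bar X_0)=\tfrac1\alpha(\partial_t\sqrt{\det g})/\sqrt{\det g}=\operatorname{Tr}(g^{-1}\partial_t g)/(2\alpha)$, obtained from Jacobi's formula together with the volume splitting $\vol(\bar g)=\alpha\,\vol(g)\,dt$; indeed the general orthonormal-frame identity $\bar\nabla^*\bar\nabla\bar u=-\sum_i\bar\nabla_{\bar X_i}\bar\nabla_{\bar X_i}\bar u-\sum_i\divg_{\bar g}(\bar X_i)\,\bar\nabla_{\bar X_i}\bar u$ shows that the first-order part is governed precisely by these divergences, with the spatial ones $\divg_g(X_i)$ recombining with the iterated spatial derivatives into $\bar\nabla^2_{\bar X_i,\bar X_i}$.
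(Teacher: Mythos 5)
Your computations are all correct and you arrive at a true identity, but by a genuinely different route than the paper, and the comparison is instructive. The paper works entirely at the level of the $L^2$ pairing and integrates by parts direction by direction: there the extra first-order term is produced by the \emph{time} direction alone, because pulling $\partial_t$ off the test field must pass the time-dependent volume form via $\partial_t \vol(g)=\tfrac12\operatorname{Tr}(g^{-1}\partial_t g)\vol(g)$, while the spatial directions are asserted to contribute only second-order terms. You instead start from the pointwise trace identity for $\bar\nabla^*\bar\nabla$ and locate the extra term in the \emph{spatial} directions, namely in the time components $\bar\omega^0_{ii}$ of the frame accelerations $\bar\nabla_{\bar X_i}\bar X_i$ (the second fundamental form of the slices $\{t\}\times M$); your computation $\sum_{i=1}^2\bar\omega^0_{ii}=-\operatorname{Tr}(g^{-1}\partial_t g)/(2\alpha)$, resting on $\sum_i a_i^\ell a_i^n=g^{\ell n}$ and $\bar\Gamma^0_{\ell n}=-\partial_t g_{\ell n}/(2\alpha^2)$, is correct, and your closing identity $\divg_{\bar g}(\bar X_0)=\operatorname{Tr}(g^{-1}\partial_t g)/(2\alpha)=-\sum_i \bar\omega^0_{ii}$ is precisely the bridge showing the two mechanisms must give the same coefficient. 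Your bookkeeping is in fact more careful than the paper's own proof, whose spatial integration by parts silently drops the frame divergences $\divg_g(X_i)$ --- nonzero in general for a Gram--Schmidt frame --- which are exactly what your argument shows must recombine with the iterated spatial derivatives.

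One caveat you should state up front, because as written your first display and your conclusion are mutually inconsistent. With your own (standard, tensorial) definition $\bar\nabla^2_{X,Y}=\bar\nabla_X\bar\nabla_Y-\bar\nabla_{\bar\nabla_X Y}$, the Bochner Laplacian is $-\sum_i\bar\nabla^2_{\bar X_i,\bar X_i}\bar u$ with \emph{no} extra term, so \eqref{eq:BL} read with that meaning is false: the summand $-\operatorname{Tr}(g^{-1}\partial_t g)\,\bar\nabla_{\bar X_0}\bar u/(2\alpha)$ would be counted twice, once inside the trace (as the temporal part $\sum_i\bar\omega^0_{ii}\bar\nabla_{\bar X_0}\bar u$ of the geodesic corrections) and once separately. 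Equation \eqref{eq:BL} is a true identity only under the hybrid reading you adopt in your third paragraph, in which, for $i=1,2$, the symbol $\bar\nabla^2_{\bar X_i,\bar X_i}$ retains the intrinsic spatial correction $-\bar\nabla_{\nabla^g_{X_i}X_i}$ but \emph{not} the time component $-\bar\omega^0_{ii}\bar\nabla_{\bar X_0}$; this ambiguity originates in the paper itself, whose later proof of Theorem~\ref{th:opt_coord} expands these terms as plain iterated derivatives $\bar\nabla_{\bar X_i}(\bar\nabla_{\bar X_i}\bar u)$. Since you say explicitly which pieces ``stay absorbed,'' this is a defect of presentation rather than of mathematics; but a reader of your opening display alone would conclude you had shown the lemma's extra term to be redundant rather than proved the lemma. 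The fix is expository: state that the clean trace formula and \eqref{eq:BL} are one identity written in two conventions for $\bar\nabla^2$, and that the substance of your proof is the translation between them, i.e.\ the splitting of $\bar\nabla_{\bar\nabla_{\bar X_i}\bar X_i}\bar u$ into its spatial and temporal parts.
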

\begin{proof}
  To calculate the expression of the Laplacian we have to compute the formula for the $L^2$-adjoint of the covariant derivative.  Taking two vector fields $\bar u$, $\bar v$ we have
  \begin{align*}
    \int_0^T\int_M \bar g(\Delta^B  u,\bar v)\vol(g)\,dt&= \int_0^T\int_M \bar g^1_1(\bar \nabla \bar u,\bar \nabla \bar v)\vol(g)\,dt
    \\&=\sum_{i=0}^2  \int_0^T\int_M \bar g(\bar \nabla_{\bar X_i} \bar u,\bar \nabla_{\bar X_i} \bar{v})\vol(g)\,dt.    
  \end{align*}
  Using $(\bar \nabla_{\bar X_i}\bar g)=0$ we obtain the following expression for the first summand ($i=0$): 
\begin{align*}
&\frac{1}{\alpha^2} \int_0^T\int_M \bar g(\bar \nabla_{\partial_t} \bar u,\bar \nabla_{\partial_t} \bar{v})\vol(g)\,dt    
\\&=
\frac{1}{\alpha^2} \int_0^T\int_M \partial_t \left(\bar g(\bar \nabla_{\partial_t} \bar u,\bar v)\right)\vol(g)\,dt-\int_0^T\int_M \frac{1}{\alpha^2} \bar g\left(\bar \nabla_{\partial_t}  \left(\bar \nabla_{\partial_t} \bar u\right),\bar v\right)\vol(g)\,dt
\\&=
\frac{1}{\alpha^2} \int_0^T\partial_t\left( \int_M  \bar g(\bar \nabla_{\partial_t} \bar u,\bar v)\vol(g)\right)\,dt-
\int_0^T\int_M \frac{1}{\alpha^2} \bar g(\bar \nabla_{\partial_t} \bar u,\bar v)\partial_t\vol(g)\,dt\\&\qquad-
\int_0^T\int_M \frac{1}{\alpha^2} \bar g(\bar \nabla^2_{\partial_t,\partial_t}   \bar u,\bar v)\vol(g)\,dt\,.
\end{align*}
 Using the variational formula \cite[Section 4.6]{Bauer2012a} $$\partial_t \operatorname{vol}(g)=\operatorname{Tr}(g^{-1}\partial_t g)\vol(g)$$ 
 yields
  \begin{align*}
    &\frac{1}{\alpha^2} \int_0^T\int_M \bar g(\bar \nabla_{\partial_t} \bar u,\bar \nabla_{\partial_t} \bar{v})\vol(g)\,dt    
    \\&\qquad=
    \frac{1}{\alpha^2} \left( \int_M  \bar g(\bar \nabla_{\partial_t} \bar u,\bar v)\vol(g)\right)\Big|_0^T
    \\&\qquad\qquad-
    \int_0^T\int_M \frac{1}{\alpha^2} \bar g(\bar \nabla_{\partial_t} \bar u,\bar v) \operatorname{Tr}(g^{-1}\partial_t g)\vol(g)  \,dt
    \\&\qquad\qquad-
    \int_0^T\int_M \frac{1}{\alpha^2} \bar g(\bar \nabla^2_{\partial_t,\partial_t}   \bar u,\bar v)\vol(g)\,dt\,.
  \end{align*}
Note that for Neumann boundary conditions  the first term in the above expression vanishes.
 
Since $M$ has no boundary, the other summands in the formula for $\Delta^B$ are similar but simpler: 
  \begin{align*}
    &\sum_{i=1}^2  \int_0^T\int_M \bar g(\bar \nabla_{\bar X_i} \bar u,\bar \nabla_{\bar X_i} \bar{v})\vol(g)\,dt\\
    &\qquad=
    \sum_{i=1}^2   \int_0^T\int_M  0-\bar g(\bar \nabla_{X_i}\left(\bar \nabla_{X_i} \bar u\right), \bar v)\vol(g)\,dt\\&=
    -\sum_{i=1}^2   \int_0^T\int_M \bar g(\bar \nabla^2_{X_i,X_i} \bar u, \bar v)\vol(g)\,dt\,.
  \end{align*}
Combining these equations we obtain the desired formula for $\Delta^B$.
\end{proof}

\begin{proof}[of Theorem~\ref{th:opt_coord}]
  We have already derived the representation of the Christoffel symbols
  and connection coefficients.
  
  Next, we will derive an explicit representation of the Bochner Laplacian
  in coordinates. To that end, we treat the two terms in~\eqref{eq:BL} separately. 
  For the second term we note that
  \[
  \bar{\nabla }_{\bar{X}_0}\bar{u} = (\bar{a}_0^m\partial_m u^j + u^m \bar{\omega}_{0m}^j) X_j\,,
  \]
  and thus we obtain
  \[
  -\frac{1}{2\alpha}\operatorname{Tr}(g^{-1}\partial_t g)\bar{\nabla }_{\bar{X}_0}\bar{u}
  = -\frac{1}{2\alpha} g^{ik} \partial_t g_{ik} u^m \bar{\omega}_{0m}^j\,.
  \]
  Moreover, we obtain from~\eqref{eq:Christoffel} that
  \[
  g^{ik}\partial_t g_{ik} = \bar{\Gamma}_{n0}^n.
  \]
  Hence, the first term becomes
  \begin{equation}\label{eq:t1}
    \frac{\operatorname{Tr}(g^{-1}\partial_t g)}{2\alpha}\bar \nabla_{\bar X_0}\bar u
    = -\frac{1}{2\alpha} \bar{\Gamma}_{n0}^n (\bar{a}_0^m\partial_m u^j + u^m\bar{\omega}_{0m}^j)X_j.
  \end{equation}

  For the second term, we compute
  \begin{equation}\label{eq:t2}
    \begin{aligned}
      \bar\nabla_{\bar{X}_i}\left(\bar\nabla_{\bar{X}_i} \bar{u}\right)
      &= \bar\nabla_{\bar{X}_i} \bigl(\bar{a}_i^\ell\partial_\ell u^j + u^k\bar{\omega}_{ik}^j\bigr)\bar{X}_j\\
      &= \Bigl(\bar{a}_i^m \partial_m\bigl(\bar{a}_i^\ell\partial_\ell u^j + u^k\bar{\omega}_{ik}^j\bigr)
      + \bigl(\bar{a}_i^\ell\partial_\ell u^m + u^k\bar{\omega}_{ik}^m\bigr)\bar{\omega}_{im}^j\Bigr)X_j\\
      &= \Bigl( (\bar{a}_i^m \partial_m \bar{\omega}_{ik}^j + \bar{\omega}_{ik}^m \bar{\omega}_{im}^j)u^k
      + \bar{a}_i^m \partial_m \bar{a}_i^\ell \partial_\ell u^j \\
      & \qquad\quad{} + 2\bar{a}_i^m \bar{\omega}_{ik}^j \partial_m u^k
      + \bar{a}_i^m \bar{a}_i^\ell \partial_{\ell m} u^j\Bigr)X_j.
    \end{aligned}
  \end{equation}

  Combining Lemma~\ref{le:sim_grad}, equations~\eqref{eq:t1} and~\eqref{eq:t2},
  and the fact that the gradient of $\norm{\bar{u}}_{0,\bar g}^2$ is simply
  $2u^j X_j$, we arrive, after dividing everything by two, at~\eqref{eq:main_eq}; equations~\eqref{eq:neuman_coord}
  are simply the Neumann boundary conditions in coordinate form.
\end{proof}

\vspace{1cm}

{\small
\begin{tabular}{rcl}
\textit{E-mail:} &\qquad &
\href{mailto.bauer.martin@univie.ac.at}{Bauer.Martin@univie.ac.at}\\
&&\href{mailto:markus.grasmair@math.ntnu.no}{Markus.Grasmair@math.ntnu.no}\\
&&\href{mailto:clemens.kirisits@univie.ac.at}{Clemens.Kirisits@univie.ac.at}\\
\end{tabular}
}


\begin{thebibliography}{10}

\bibitem{AcaVog94}
R.~Acar and C.~R. Vogel.
\newblock Analysis of bounded variation penalty methods for ill-posed problems.
\newblock {\em Inverse Probl.}, 10(6):1217--1229, 1994.

\bibitem{AmaLemMosMcDWan14}
F.~Amat, W.~Lemon, D.~P. Mossing, K.~McDole, Y.~Wan, K.~Branson, E.~W. Myers,
  and P.~J. Keller.
\newblock Fast, accurate reconstruction of cell lineages from large-scale
  fluorescence microscopy data.
\newblock {\em Nature Methods}, 2014.

\bibitem{AmaMyeKel13}
F.~Amat, E.~W. Myers, and P.~J. Keller.
\newblock Fast and robust optical flow for time-lapse microscopy using
  super-voxels.
\newblock {\em Bioinformatics}, 29(3):373--380, 2013.

\bibitem{AubDerKor99}
G.~Aubert, R.~Deriche, and P.~Kornprobst.
\newblock Computing optical flow via variational techniques.
\newblock {\em SIAM J. Appl. Math.}, 60:156--182, 1999.

\bibitem{AubVes97}
G.~Aubert and L.~Vese.
\newblock A variational method in image recovery.
\newblock {\em SIAM J. Numer. Anal.}, 34(5):1948--1979, 1997.

\bibitem{BakSchaLewRotBla11}
S.~Baker, D.~Scharstein, J.~P. Lewis, S.~Roth, M.~J. Black, and R.~Szeliski.
\newblock {A Database and Evaluation Methodology for Optical Flow}.
\newblock {\em Int. J. Comput. Vision}, 92(1):1--31, Nov. 2011.

\bibitem{Bauer2013b}
M.~Bauer, M.~Bruveris, P.~Harms, and P.~W. Michor.
\newblock Geodesic distance for right invariant {S}obolev metrics of fractional
  order on the diffeomorphism group.
\newblock {\em Ann. Global Anal. Geom.}, 44(1):5--21, 2013.

\bibitem{Bauer2014}
M.~Bauer, M.~Bruveris, and P.~W. Michor.
\newblock Overview of the {G}eometries of {S}hape {S}paces and {D}iffeomorphism
  {G}roups.
\newblock {\em J. Math. Imaging Vision}, 50(1-2):60--97, 2014.

\bibitem{Bauer2011b}
M.~Bauer, P.~Harms, and P.~W. Michor.
\newblock {Sobolev metrics on shape space of surfaces}.
\newblock {\em J. Geom. Mech.}, 3(4):389--438, 2011.

\bibitem{Bauer2012a}
M.~Bauer, P.~Harms, and P.~W. Michor.
\newblock Almost local metrics on shape space of hypersurfaces in {$n$}-space.
\newblock {\em SIAM J. Imaging Sci.}, 5(1):244--310, 2012.

\bibitem{Bauer2012d}
M.~Bauer, P.~Harms, and P.~W. Michor.
\newblock {Sobolev Metrics on Shape Space, II: Weighted Sobolev Metrics and
  Almost Local Metrics}.
\newblock {\em J. Geom. Mech.}, 4(4):365--383, 2012.

\bibitem{BruWeiSchn05}
A.~Bruhn, J.~Weickert, and C.~Schn\"orr.
\newblock {L}ucas/{K}anade meets {H}orn/{S}chunck: Combining local and global
  optic flow methods.
\newblock {\em Int. J. Comput. Vision}, 61(3):211--231, 2005.

\bibitem{Coh93}
I.~Cohen.
\newblock Nonlinear variational method for optical flow computation.
\newblock In {\em Proc. Eighth Scandinavian Conf. on Image Analysis}, volume~1,
  pages 523--530, 1993.
\newblock SCIA '93, Troms{\o}, May 25--28, 1993.

\bibitem{Eichhorn2007}
J.~Eichhorn.
\newblock {\em Global analysis on open manifolds}.
\newblock Nova Science Publishers Inc., New York, 2007.

\bibitem{Eul1765a}
L.~P. Euler.
\newblock {D}u mouvement de rotation des corps solides autour d'un axe
  variable.
\newblock {\em M\'{e}moires de l'acad\'{e}mie des sciences de Berlin},
  14:154--193, 1765.

\bibitem{FleWei06}
D.~Fleet and Y.~Weiss.
\newblock Optical flow estimation.
\newblock In {\em Handbook of mathematical models in computer vision}, pages
  239--257. Springer, New York, 2006.

\bibitem{Gra11a}
M.~Grasmair.
\newblock Linear convergence rates for {T}ikhonov regularization with
  positively homogeneous functionals.
\newblock {\em Inverse Probl.}, 27(7):075014, 2011.

\bibitem{Hamilton1982}
R.~S. Hamilton.
\newblock {The inverse function theorem of {N}ash and {M}oser}.
\newblock {\em Bull. Amer. Math. Soc. (N.S.)}, 7(1):65--222, 1982.

\bibitem{Rumpf2012}
B.~Heeren, M.~Rumpf, M.~Wardetzky, and B.~Wirth.
\newblock {Time-Discrete Geodesics in the Space of Shells}.
\newblock {\em Computer Graphics Forum}, 31(5):1755–1764, 2012.

\bibitem{HorSchu81}
B.~K.~P. Horn and B.~G. Schunck.
\newblock Determining optical flow.
\newblock {\em Artificial Intelligence}, 17:185--203, 1981.

\bibitem{ImiSugTorMoc05}
A.~Imiya, H.~Sugaya, A.~Torii, and Y.~Mochizuki.
\newblock Variational analysis of spherical images.
\newblock In A.~Gagalowicz and W.~Philips, editors, {\em Computer Analysis of
  Images and Patterns}, volume 3691 of {\em Lecture Notes in Computer Science},
  pages 104--111. Springer Berlin, Heidelberg, 2005.

\bibitem{Jermyn2012}
I.~H. Jermyn, S.~Kurtek, E.~Klassen, and A.~Srivastava.
\newblock {Elastic shape matching of parameterized surfaces using square root
  normal fields}.
\newblock In {\em {Proceedings of the 12th European conference on Computer
  Vision - Volume Part V}}, {ECCV'12}, pages 804--817, Berlin, Heidelberg,
  2012. Springer-Verlag.

\bibitem{KimBalKimUllSchi95}
C.~B. Kimmel, W.~W. Ballard, S.~R. Kimmel, B.~Ullmann, and T.~F. Schilling.
\newblock Stages of embryonic development of the zebrafish.
\newblock {\em Devel. Dyn.}, 203(3):253--310, 1995.

\bibitem{KirLanSch13}
C.~Kirisits, L.~F. Lang, and O.~Scherzer.
\newblock Optical flow on evolving surfaces with an application to the analysis
  of 4{D} microscopy data.
\newblock In A.~Kuijper, K.~Bredies, T.~Pock, and H.~Bischof, editors, {\em
  {SSVM'13}: Proceedings of the fourth International Conference on Scale Space
  and Variational Methods in Computer Vision}, volume 7893 of {\em Lecture
  Notes in Computer Science}, pages 246--257, Berlin, Heidelberg, 2013.
  Springer-Verlag.

\bibitem{KirLanSch14a}
C.~Kirisits, L.~F. Lang, and O.~Scherzer.
\newblock Optical flow on evolving surfaces with space and time regularisation.
\newblock {\em J. Math. Imaging Vision}, 2014.

\bibitem{LefBai08}
J.~Lef\`evre and S.~Baillet.
\newblock Optical flow and advection on 2-{R}iemannian manifolds: A common
  framework.
\newblock {\em IEEE Trans. Pattern Anal. Mach. Intell.}, 30(6):1081--1092,
  2008.

\bibitem{MegFra03}
S.~G. Megason and S.~E. Fraser.
\newblock Digitizing life at the level of the cell: high-performance
  laser-scanning microscopy and image analysis for in toto imaging of
  development.
\newblock {\em Mech. Dev.}, 120(11):1407--1420, 2003.

\bibitem{Michor1980}
P.~W. Michor.
\newblock {\em {Manifolds of differentiable mappings}}.
\newblock Shiva Publ., 1980.

\bibitem{Nagel1987}
H.-H. Nagel.
\newblock {On the estimation of optical flow: Relations between different
  approaches and some new results}.
\newblock {\em Artificial Intelligence}, 33(3):299--324, 1987.

\bibitem{Nagel1988}
H.-H. Nagel.
\newblock Image sequences --- ten (octal) years from phenomenology to a
  theoretical foundation.
\newblock {\em International Journal of Pattern Recognition and Artificial
  Intelligence}, 02(03):459--483, 1988.

\bibitem{Nagel1990}
H.-H. Nagel.
\newblock Extending the `oriented smoothness constraint' into the temporal
  domain and the estimation of derivatives of optical flow.
\newblock In {\em Proceedings of the First European Conference on Computer
  Vision}, ECCV 90, pages 139--148, New York, NY, USA, 1990. Springer-Verlag
  New York, Inc.

\bibitem{NagEnk86}
H.-H. Nagel and W.~Enkelmann.
\newblock An investigation of smoothness constraints for the estimation of
  displacement vector fields from image sequences.
\newblock {\em IEEE Trans. Pattern Anal. Mach. Intell.}, 8:565--593, 1986.

\bibitem{Rumpf2011}
M.~Rumpf and B.~Wirth.
\newblock An elasticity-based covariance analysis of shapes.
\newblock {\em Int. J. Comput. Vis.}, 92(3):281--295, 2011.

\bibitem{SchGraGroHalLen09}
O.~Scherzer, M.~Grasmair, H.~Grossauer, M.~Haltmeier, and F.~Lenzen.
\newblock {\em Variational methods in imaging}, volume 167 of {\em Applied
  Mathematical Sciences}.
\newblock Springer, New York, 2009.

\bibitem{SchmShaScheWebThi13}
B.~Schmid, G.~Shah, N.~Scherf, M.~Weber, K.~Thierbach, C.~Campos~P{\'e}rez,
  I.~Roeder, P.~Aanstad, and J.~Huisken.
\newblock High-speed panoramic light-sheet microscopy reveals global endodermal
  cell dynamics.
\newblock {\em Nat. Commun.}, 4:2207, 2013.

\bibitem{Schn91a}
C.~Schn{\"o}rr.
\newblock Determining optical flow for irregular domains by minimizing
  quadratic functionals of a certain class.
\newblock {\em Int. J. Comput. Vision}, 6:25--38, 1991.

\bibitem{Jermyn2011}
A.~Srivastava, E.~Klassen, S.~Joshi, and I.~Jermyn.
\newblock {Shape Analysis of Elastic Curves in Euclidean Spaces}.
\newblock {\em Pattern Analysis and Machine Intelligence, IEEE Transactions
  on}, 33(7):1415--1428, 2011.

\bibitem{TorImiSugMoc05}
A.~Torii, A.~Imiya, H.~Sugaya, and Y.~Mochizuki.
\newblock {Optical Flow Computation for Compound Eyes: Variational Analysis of
  Omni-Directional Views}.
\newblock In M.~De~Gregorio, V.~Di~Maio, M.~Frucci, and C.~Musio, editors, {\em
  Brain, Vision, and Artificial Intelligence}, volume 3704 of {\em Lecture
  Notes in Computer Science}, pages 527--536. Springer Berlin, Heidelberg,
  2005.

\bibitem{Triebel1992}
H.~Triebel.
\newblock {\em Theory of function spaces. {II}}, volume~84 of {\em Monographs
  in Mathematics}.
\newblock Birkh\"auser Verlag, Basel, 1992.

\bibitem{WeiBruBroPap06}
J.~Weickert, A.~Bruhn, T.~Brox, and N.~Papenberg.
\newblock A survey on variational optic flow methods for small displacements.
\newblock In O.~Scherzer, editor, {\em Mathematical Models for Registration and
  Applications to Medical Imaging}, volume~10 of {\em Mathematics in Industry},
  pages 103--136. Springer, Berlin Heidelberg, 2006.

\bibitem{WeiSchn01a}
J.~Weickert and C.~Schn{\"o}rr.
\newblock A theoretical framework for convex regularizers in {PDE}-based
  computation of image motion.
\newblock {\em Int. J. Comput. Vision}, 45(3):245--264, 2001.

\bibitem{WeiSchn01}
J.~Weickert and C.~Schn\"orr.
\newblock Variational optic flow computation with a spatio-temporal smoothness
  constraint.
\newblock {\em J. Math. Imaging Vision}, 14:245--255, 2001.

\end{thebibliography}
\end{document}